% The periodicity conjecture for pairs of Dynkin diagrams
% 17B20  (1973-now) Simple, semisimple, reductive (super)algebras
% 13F60 Cluster algebras
% 18E30 Derived categories, triangulated categories
% 20F55  (1973-now) Reflection and Coxeter groups [See also 22E40, 51F15]
% 82B23  (1991-now) Exactly solvable models; Bethe ansatz
% 16G20 Representations of quivers and partially ordered sets
\documentclass[a4paper,11pt]{amsart}
\usepackage[all]{xy}
\usepackage{amssymb}
\usepackage{amsopn}
\usepackage{amsmath}

% CRM format

%\setlength{\baselineskip}{0.3cm}
%\setlength{\topmargin}{1.5cm}
%\setlength{\textwidth}{11.5cm}
%\setlength{\textheight}{17.5cm}
%\setlength{\footskip}{.7cm}
%\addtolength{\evensidemargin}{1mm}
%\addtolength{\oddsidemargin}{1mm}
%\setlength{\headsep}{.6cm}
%\setlength{\headheight}{4mm}

\textwidth15.1cm \textheight22.7cm \headheight12pt
\oddsidemargin.4cm \evensidemargin.4cm \topmargin0cm

%\numberwithin{equation}{section}

\newcommand{\ie}{{\em i.e.}\ }
\newcommand{\cf}{{\em cf.}\ }
\newcommand{\eg}{{\em e.g.}\ }
\newcommand{\ko}{\: , \;}

\setcounter{tocdepth}{1}

\numberwithin{equation}{subsection}
\newtheorem{theorem}[subsection]{Theorem}
\newtheorem{classification-theorem}[subsection]{Classification Theorem}
\newtheorem{decomposition-theorem}[subsection]{Decomposition Theorem}
\newtheorem{proposition-definition}[subsection]{Proposition-Definition}
\newtheorem{periodicity-conjecture}[subsection]{Periodicity Conjecture}
\newtheorem{lemma}[subsection]{Lemma}
\newtheorem{proposition}[subsection]{Proposition}
\newtheorem{corollary}[subsection]{Corollary}

\newtheorem{example}[subsection]{Example}
\newtheorem{remark}[subsection]{Remark}

\newcommand{\reminder}[1]{}

\newcommand{\opname}[1]{\operatorname{\mathsf{#1}}}

\renewcommand{\mod}{\opname{mod}\nolimits}

\newcommand{\proj}{\opname{proj}\nolimits}
\newcommand{\inj}{\opname{inj}\nolimits}

\newcommand{\ind}{\opname{ind}\nolimits}
\newcommand{\per}{\opname{per}\nolimits}
\newcommand{\add}{\opname{add}\nolimits}

\newcommand{\Gr}{\opname{Gr}\nolimits}

\newcommand{\Za}{\opname{Za}\nolimits}
\newcommand{\hrz}{\opname{hrz}\nolimits}
\newcommand{\vrt}{\opname{vrt}\nolimits}
\newcommand{\Trop}{\opname{Trop}\nolimits}

\renewcommand{\ker}{\opname{ker}\nolimits}
\newcommand{\obj}{\opname{obj}\nolimits}

\newcommand{\Z}{\mathbb{Z}}
\newcommand{\N}{\mathbb{N}}
\newcommand{\Q}{\mathbb{Q}}
\newcommand{\C}{\mathbb{C}}

\newcommand{\iso}{\stackrel{_\sim}{\rightarrow}}
\newcommand{\id}{\mathbf{1}}

%
% Hom-spaces
%
\newcommand{\Hom}{\opname{Hom}}

\newcommand{\RHom}{\opname{RHom}}

\newcommand{\Ext}{\opname{Ext}}

\newcommand{\End}{\opname{End}}

\newcommand{\boxten}{\boxtimes}
\newcommand{\ten}{\otimes}
\newcommand{\lten}{\overset{\boldmath{L}}{\ten}}

\newcommand{\Tor}{\opname{Tor}}

\newcommand{\hh}{\opname{HH}}
%
% Calligraphic letters
%
\newcommand{\ca}{{\mathcal A}}

\newcommand{\cc}{{\mathcal C}}
\newcommand{\cd}{{\mathcal D}}

\newcommand{\cp}{{\mathcal P}}

\newcommand{\ct}{{\mathcal T}}

\newcommand{\eps}{\varepsilon}
\renewcommand{\phi}{\varphi}
\newcommand{\del}{\partial}

\renewcommand{\hat}[1]{\widehat{#1}}

\newcommand{\bt}{\bullet}

\newcommand{\sgn}{\mbox{sgn}}

\renewcommand{\tilde}[1]{\widetilde{#1}}

\newcommand{\arr}[1]{\stackrel{#1}{\rightarrow}}

\begin{document}

\date{December 2009, last modified on \today}

\title[The periodicity conjecture]{The periodicity conjecture \\for pairs of Dynkin diagrams}
\author{Bernhard Keller}
\address{Universit\'e Paris Diderot -- Paris 7\\
	Institut Universitaire de France\\
    UFR de Math\'ematiques\\
   Institut de Math\'ematiques de Jussieu, UMR 7586 du CNRS \\
   Case 7012\\
    B\^{a}timent Chevaleret\\
    75205 Paris Cedex 13\\
    France
}
\email{keller@math.jussieu.fr}

\begin{abstract}
  We prove the periodicity conjecture for pairs of Dynkin diagrams
  using Fomin-Zelevinsky's cluster algebras and their
  (additive) categorification via triangulated categories.
\end{abstract}

\maketitle
\tableofcontents

\section{Introduction}
\label{s:introduction}

\subsection{The periodicity conjecture}
The {\em $Y$-system} associated with a pair $(\Delta, \Delta')$
of Dynkin diagrams is a certain infinite system of algebraic
recurrence equations. The {\em periodicity conjecture} asserts
that all solutions to this system are periodic of period
dividing the double of the sum of the Coxeter numbers of $\Delta$ and $\Delta'$.
The conjecture was first formulated by Al.~B.~Zamolodchikov \cite{Zamolodchikov91}
(for $(\Delta, A_1)$, where $\Delta$ is simply laced) in his
study of the thermodynamic Bethe ansatz. We refer to
\cite{FrenkelSzenes95} for an introduction to the significance
of the conjecture in physics and to \cite{FrenkelSzenes95}
\cite{Chapoton05} \cite{FockGoncharov09} \cite{Nakanishi11c}
for its applications to dilogarithm identities.
Zamolodchikov's original conjecture was subsequently generalized
\begin{itemize}
\item by Ravanini-Valleriani-Tateo to $(\Delta, \Delta')$, where $\Delta$
and $\Delta'$ are simply laced or `tadpoles' \cite[(6.2)]{RavaniniTateoValleriani93};

\item by Kuniba-Nakanishi to $(\Delta, A_n)$, where $\Delta$ is not
necessarily simply laced \cite[(2a)]{KunibaNakanishi92}, see also
Kuniba-Nakanishi-Suzuki \cite[B.6]{KunibaNakanishiSuzuki94}.

% \item by Gliozzi-Tateo for $(\Delta, \Delta')$, both simply
% laced (or of type $T$) \cite{GliozziTateo95}.
\end{itemize}

\noindent The conjecture was proved

\begin{itemize}
\item for $(A_n, A_1)$  by Frenkel-Szenes \cite{FrenkelSzenes95}
(who produced explicit solutions) and independently by Gliozzi-Tateo
\cite{GliozziTateo96} (via volumes of threefolds computed using
triangulations),

\item by Fomin-Zelevinsky \cite{FominZelevinsky03b} for $(\Delta,
A_1)$, where $\Delta$ is not necessarily simply laced (via the
philosophy of cluster algebras and a computer check for the
exceptional types; a uniform proof can now be given
using \cite{YangZelevinsky08}),

\item for $(A_n, A_m)$ by Volkov \cite{Volkov07}, who exhibited explicit
solutions using cross ratios, and by Szenes \cite{Szenes09}, who interpreted
the system as a system of flat connections on a graph; an equivalent statement
was proved by Henriques \cite{Henriques07}.
\end{itemize}
An original approach via representations of quantum affine algebras
is due to Hernandez-Leclerc \cite{HernandezLeclerc10}.
They treat the case $(A_n, A_1)$ and obtain formulas for
solutions in terms of $q$-characters.

In \cite{Keller08c}, we announced a proof of the conjecture in the
general case and gave an outline of the proof. In this article,
we provide the detailed proof. Notice that for the case $(\Delta, A_n)$, where $\Delta$ is
not simply laced, there are two variants of the conjecture: the
first one was stated by Kuniba-Nakanishi \cite{KunibaNakanishi92} and
involves {\em dual Coxeter numbers} (\cf \eg Chapter~6 of \cite{Kac90}).
It is proved in the papers \cite{InoueIyamaKellerKunibaNakanishi10a} and
\cite{InoueIyamaKellerKunibaNakanishi10b}. The second one is due to
Fomin-Zelevinsky \cite{FominZelevinsky03b} and is
proved at the end of this article.
The conjecture has a refined version (half-periodicity) and
an analogue for the so-called $T$-system. Both were proved in
\cite{InoueIyamaKunibaNakanishiSuzuki10} in the simply laced case
by adapting the method of \cite{Keller08c}.

\subsection{On the proof} The proof we propose is based on a reformulation of
the conjecture in terms of Fomin-Zelevinsky's cluster algebras
\cite{FominZelevinsky02} \cite{FominZelevinsky03} \cite{BerensteinFominZelevinsky05}
\cite{FominZelevinsky07} \cite{Zelevinsky07a} \cite{Fomin07}
and on the recent theory linking cluster algebras to representations
of quivers (with relations).

The link between the conjecture and cluster algebras goes back to
Fomin-Zelevinsky's fundamental work \cite{FominZelevinsky03b}. Here and in
\cite{FominZelevinsky07}, they showed how the $Y$-system appearing
in the conjecture is controlled by the evolution of $Y$-seeds in
the bipartite belt of a cluster algebra. Let us point out that recently, other
discrete dynamical systems have been linked to cluster algebras,
in particular the $T$-system \cite{InoueIyamaKunibaNakanishiSuzuki10}
and the $Q$-system \cite{Kedem08} \cite{DiFrancescoKedem09}.

The theory linking cluster algebras to
quiver representations was initiated by Marsh-Reineke-Zelevinsky
\cite{MarshReinekeZelevinsky03} and subsequently developed in
several variants and by many authors, \cf for example the surveys
\cite{BuanMarsh06} \cite{GeissLeclercSchroeer08a} \cite{Keller09b}
\cite{Reiten10} \cite{Ringel07}. It is related to
Kontsevich-Soibelman's interpretation of cluster transformations in
their theory of Donaldson-Thomas invariants
\cite{KontsevichSoibelman08} \cite{KontsevichSoibelman10a} and, in
fact, it provided one of the starting points for Kontsevich-Soibelman's
work.

For our purposes, we use the {\em cluster categories} first introduced in
\cite{BuanMarshReinekeReitenTodorov06} (for general quivers) and
independently in \cite{CalderoChapotonSchiffler06} (for quivers of
type $A_n$). These are certain triangulated categories which are
$2$-Calabi-Yau and contain a distinguished object with remarkable
properties (a cluster tilting object). Originally, the cluster
category $\cc_A$ was defined for algebras $A$ of global dimension at
most one; however, in recent work, Amiot \cite{Amiot09} has extended
the construction to many algebras of global dimension at most $2$.
We show that Amiot's results apply in particular to tensor products
$A=kQ\ten kQ'$ of path algebras of quivers obtained by orienting the
given Dynkin diagrams $\Delta$ and $\Delta'$. Using Palu's
\cite{Palu08a} generalization of the Caldero-Chapoton map
\cite{CalderoChapoton06} we show that the resulting cluster category
$\cc_A$ is indeed an (additive) categorification of the cluster
algebra which controls the $Y$-system associated with
$(\Delta,\Delta')$. In the categorification $\cc_A$, it is easy to
write down an autoequivalence, the {\em Zamolodchikov
transformation}, whose powers provide the solutions to the
$Y$-system. We conclude by showing that the Zamolodchikov
transformation is of order dividing the sum of the Coxeter numbers
of $\Delta$ and $\Delta'$.

This proof is effective in the sense that in prinicple,
it yields explicit (periodic) formulas for the general
solution of the $Y$-system. It is conceptual in the sense
that the validity of the conjecture is obtained from
a categorical periodicity theorem, which in turn follows from
classical results of Gabriel and Happel. For $\Delta'=A_1$,
the proof specializes to a new proof of the case due to
Fomin-Zelevinsky \cite{FominZelevinsky03b}.

\subsection{Contents} In section~\ref{s:the-conjecture}, we state
the conjecture for pairs of Dynkin diagrams $(\Delta, \Delta')$ which may be multiply
laced. We present the plan of the proof in section~\ref{ss:plan-of-the-proof}.
We treat the case of simply laced Dynkin diagrams in sections
\ref{s:reformulation} to \ref{s:categorical-periodicity}.
In section~\ref{s:reformulation}, we recall fundamental constructions
from the theory of cluster algebras: quiver mutation and the
mutation of $Y$-seeds. Then we introduce three types of products
of quivers (section~\ref{ss:products-of-quivers})
and reformulate the conjecture as a statement
about the periodicity of a sequence of mutations
$\mu_\boxtimes$ applied to the initial $Y$-seed associated
with the triangle product $Q\boxtimes Q'$ of two alternating
quivers with underlying graphs $\Delta$ and $\Delta'$ (section~\ref{ss:reformulation}).
Section~\ref{s:more-cluster-combinatorics} is devoted to
tropical $Y$-variables and $F$-polynomials: we recall how they
are constructed and how they can be used to express the
non tropical $Y$-variables (section~\ref{ss:trop-Y-var-F-pol});
we also introduce $g$-vectors and cluster variables
(section~\ref{ss:cluster-variables-g-vectors}), which are
useful in the application of our categorical model to the so-called
$T$-systems.

In section~\ref{s:Calabi-Yau-triangulated-categories}, we recall the
notions of $2$-Calabi-Yau triangulated category $\cc$ and of a cluster tilting
object $T$ in such a category. There is a canonical quiver associated with
the datum of $\cc$ and $T$, namely the {\em endoquiver}  $R$ of $T$ (=quiver
of the endomorphism algebra of $T$ in $\cc$). The pair $(\cc,T)$ is called
a {\em $2$-Calabi-Yau realization} of the quiver $R$. One expects the
cluster combinatorics associated with the quiver $R$ to be encoded in the category
$\cc$. We therefore construct a $2$-Calabi-Yau realization $(\cc,T)$ for
the triangle product $R=Q\boxtimes Q'$ (section~\ref{ss:2-CY-realizations-of-triangle-products}).
In section~\ref{ss:description-via-quiver-with-potential}, we describe
the category $\cc$ using a quiver with potential. This will later enable
us to determine the endoquivers of new cluster tilting objects in $\cc$.

In section~\ref{s:cluster-combinatorics-from-triangulated-categories}, we
show how to recover cluster combinatorial data associated with a quiver
$R$ from a $2$-Calabi-Yau realization $(\cc,T)$ of $R$. The most important
step is to lift the mutation operation to the mutation of cluster tilting
objects in such a way that the endoquivers transform as predicted by
Fomin-Zelevinsky's mutation rule (section~\ref{ss:decategorification:quiver-mutation}).
In fact, this is not always possible because $2$-cycles may appear in the
quivers of the mutated cluster tilting objects. However, we show in
sections~\ref{ss:decategorification:quiver-mutation},
\ref{ss:decategorification:g-vectors-tropical-Y-variables}
and \ref{ss:decategorification:cluster-variables-F-polynomials}, that
if no $2$-cycles appear, then all cluster combinatorial data we need
can be recovered from the category: quivers, tropical $Y$-variables and
$F$-polynomials (as well as $g$-vectors and cluster variables). For
a given $2$-Calabi-Yau realization $(\cc,T)$, the appearance of $2$-cycles
in the endoquivers of a sequence of mutations of $T$ has to be excluded
by an explicit computation. In section~\ref{ss:constrained-quivers},
we perform this computation for the $2$-Calabi-Yau realizations
associated with a class of quivers with potential (the $(Q,Q')$-constrained
quivers with potential) which includes our $2$-Calabi-Yau realization $\cc$ of
the  triangle product $Q\boxtimes Q'$
and its canonical sequence of mutations $\mu_\boxtimes$. We then reduce
the conjecture to the study of an auto-equivalence, the {\em Zamolodchikov transformation},
of the category $\cc$ (sections~\ref{ss:Zamolodchikov-transformation}
and \ref{ss:Zamolodchikov-composition-of-mutations}). We conclude by
showing that the order of the Zamolodchikov transformation is finite and
divides the sum of the Coxeter numbers (section~\ref{ss:order-of-the-Zamolodchikov-transformation}).
In section~\ref{s:non-simply-laced-case}, we reduce the non simply laced
case of the conjecture (in the form due to Fomin-Zelevinsky) to the
simply laced case using the classical folding technique.

In the final section~\ref{s:effectiveness}, we show that our proof is
effective in the sense that, in principle, it allows us to write down explicit (periodic)
formulas for the general solution of the $Y$-system in terms
of homological invariants and Euler characteristics of quiver Grassmannians.

\subsection*{Acknowledgments}
The author is indebted to Bernard Leclerc, who rekindled his
interest in the conjecture during a conversation at the MSRI
program on combinatorial representation theory in May 2008.
He thanks Carles Casacuberta, Andr\'e Joyal, Joachim Kock
and Amnon Neeman for an invitation to the Centre de
Recerca Matem\`atica, Barcelona, where the details of the
proof were worked out and \cite{Keller08c} was written down
in June 2008. He is grateful Sarah Scherotzke, Lingyan Guo
and Alfredo N\'ajera Ch\'avez
for pointing out misprints in an earlier version of this
paper as well as to two anonymous referees for their careful
reading of the manuscript and their helpful advice.

\section{The conjecture}
\label{s:the-conjecture}

\subsection{Statement}
\label{ss:statement} Let $\Delta$ and $\Delta'$ be two Dynkin
diagrams with vertex sets $I$ and $I'$. Let $A$ and $A'$ be the
incidence matrices of $\Delta$ and $\Delta'$, \ie if $C$ is the
Cartan matrix of $\Delta$ and $J$ the identity matrix of the same
format, then $A=2 J -C$. Let $h$ and $h'$ be the Coxeter numbers of
$\Delta$ and $\Delta'$.

The {\em $Y$-system of algebraic equations} associated with the pair
of Dynkin diagrams $(\Delta,\Delta')$ is a system of countably many
recurrence relations in the variables $Y_{i,i',t}$, where $(i,i')$
is a vertex of $\Delta\times\Delta'$ and $t$ an integer. The system
reads as follows:
\begin{equation} \label{eq:Y-system}
Y_{i,i',t-1} Y_{i,i',t+1} = \frac{\prod_{j\in I}
(1+Y_{j,i',t})^{a_{ij}}}{\prod_{j'\in I'}
(1+Y_{i,j',t}^{-1})^{a'_{i'j'}}}.
\end{equation}

\begin{periodicity-conjecture} \label{conj:periodicity} All solutions to this
system are periodic in $t$ of period dividing $2(h+h')$.
\end{periodicity-conjecture}

\noindent
We refer to the introduction for a sketch of the
history of the conjecture.

\begin{theorem}
The periodicity conjecture~\ref{conj:periodicity} is true.
\end{theorem}

Let us give an algebraic reformulation of the conjecture:
Let $K$ be the fraction field of the ring of integer polynomials
in the variables $Y_{i i'}$, where
$i$ runs through the set of vertices $I$ of $\Delta$ and $i'$
through the set of vertices $I'$ of $\Delta'$. Since $\Delta$ is a
tree, the set $I$ is the disjoint union of two subsets $I_+$ and
$I_-$ such that there are no edges between any two vertices of $I_+$
and no edges between any two vertices of $I_-$. Analogously, $I'$ is
the disjoint union of two sets of vertices $I'_+$ and $I'_-$. For a
vertex $(i,i')$ of the product $I\times I'$, define $\eps(i,i')$ to
be $1$ if $(i,i')$ lies in $I_+\times I'_+ \cup I_-\times I'_-$ and
$-1$ otherwise. For $\eps=\pm 1$, define an automorphism $\tau_\eps$
of $K$ by
\begin{equation} \label{eq:automorphism-tau-eps}
\tau_\eps(Y_{i i'}) = \left\{\begin{array}{ll}
Y_{i i'}\prod_{j} (1 + Y_{ji'})^{a_{ij}}
\prod_{j'} (1+ Y_{ij'}^{-1})^{-a'_{i'j'}} & \mbox{ if } \eps(i,i')=\eps ; \\
Y_{ii'}^{-1} & \mbox{ if } \eps(i,i')=-\eps.
\end{array} \right.
\end{equation}
Finally, define an automorphism $\phi$ of $K$
by
\begin{equation} \label{eq:automorphism-phi}
\phi = \tau_- \tau_+.
\end{equation}
Then, as in \cite{FominZelevinsky03b}, we have the following lemma:

\begin{lemma} \label{lemma:periodicity-from-finite-order}
The periodicity conjecture holds iff
the order of the automorphism $\phi$ is finite and divides $h+h'$.
\end{lemma}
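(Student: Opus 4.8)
The plan is to set up a dictionary between solutions of the $Y$-system \eqref{eq:Y-system} and the orbits of the automorphism $\phi$, and then transport the periodicity statement across this dictionary. The crucial point is that, since $\Delta$ and $\Delta'$ are bipartite, the sign $\eps(i,i')$ colours the vertices of $\Delta\times\Delta'$ into two classes with no edges inside a class, and $\tau_+,\tau_-$ implement the two ``half-steps'' of the recurrence: by its very definition \eqref{eq:automorphism-tau-eps}, $\tau_\eps$ updates the variables of one colour by exactly the mutation factor appearing on the right-hand side of \eqref{eq:Y-system} and inverts the variables of the other colour.

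First I would verify, by a direct computation from \eqref{eq:automorphism-tau-eps}, that iterating $\tau_+$ and $\tau_-$ alternately produces a solution of \eqref{eq:Y-system}. Concretely, fix a parity convention identifying $\eps=+1$ with even and $\eps=-1$ with odd times, and call a triple $(i,i',t)$ \emph{physical} when the parity of $t$ matches that of $\eps(i,i')$. One checks that, read at the \emph{non}-physical triples, the recurrence \eqref{eq:Y-system} expresses each physical variable $Y_{i,i',t+1}$ through physical variables at times $t-1$ and $t$; consequently the full system decouples into two isomorphic first-order subsystems (the physical and the non-physical one), each determined by a single time-slice of initial data. Setting that slice equal to the generators $Y_{ii'}$ yields the \emph{universal solution}, and the content of the computation is that advancing it by one physical step is precisely an application of $\tau_+$ or $\tau_-$, so that $\phi=\tau_-\tau_+$ advances every physical variable by two units of $t$.

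Next I would invoke genericity. Since $K$ is the fraction field of the polynomial ring on the $Y_{ii'}$, the universal solution is the generic solution: any equality among the $Y_{i,i',t}$ that holds for it is an identity of rational functions and hence specialises to every solution over every field, while it is itself a solution. Therefore \emph{all} solutions are periodic of period dividing $2(h+h')$ if and only if the universal solution is, and by the previous step this holds if and only if $\phi^{h+h'}=\id$, i.e.\ the order of $\phi$ is finite and divides $h+h'$; the factor $2$ is forced precisely because $\phi$ is a double step.

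The main obstacle I expect is the bookkeeping in the first step: one must match the inversions and mutation factors in \eqref{eq:automorphism-tau-eps} against the second-order recurrence \eqref{eq:Y-system} and check that, over two consecutive half-steps, the inverted variables telescope correctly to reproduce $Y_{i,i',t-1}Y_{i,i',t+1}=(\text{RHS})_t$. Keeping track of which colour is updated versus inverted at each half-step, and of the staircase shape of the initial slice, is routine but error-prone; everything else — the decoupling, the genericity argument, and the passage from period $2(h+h')$ to $\phi^{h+h'}=\id$ — is formal once this identification is in place.
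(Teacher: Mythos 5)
Your proposal is correct and follows essentially the same route as the paper: decompose the system according to the parity $\eps(i,i')(-1)^t$, identify the evolution of the generic (universal) solution with the alternating application of $\tau_+$ and $\tau_-$ so that $\phi$ realizes a double time step, and transfer periodicity to arbitrary solutions by specialization. The only cosmetic difference is that the paper normalizes the odd subsystem by $Y_{i,i',t}=Y_{i,i',t-1}^{-1}$ so as to rewrite everything as the first-order recurrence~\ref{eq:normalized-Y-system} on a single full slice, whereas you work directly with the staircase of ``physical'' variables; the genericity step you spell out is left implicit in the paper's closing ``which clearly implies the assertion''.
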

\begin{proof} We adapt the proof given in \cite{FominZelevinsky03b}
for the case where $\Delta'=A_1$.
First we notice that the equation~\ref{eq:Y-system}
only involves variables $Y_{i,i',t}$ with a fixed `parity'
$\eps(i,i')(-1)^t$. Therefore, the $Y$-system decomposes into two
independent systems, an even one and an odd one, and it suffices to
show periodicity for one of them, say the even one. Thus, without
loss of generality, we may modify the odd system, and for the
purposes of the proof, we choose to put
\begin{equation} \label{eq:normalize-Y}
Y_{i,i',t}=Y_{i,i',t-1}^{-1} \mbox{ whenever } \eps(i,i')(-1)^t =-1.
\end{equation}
We combine~\ref{eq:Y-system} and \ref{eq:normalize-Y} into
\begin{equation} \label{eq:normalized-Y-system}
Y_{i,i',t+1}=\left\{ \begin{array}{ll} Y_{i,i',t}
\prod_{j\in I} (1+Y_{j,i',t})^{a_{ij}}\prod_{j'\in I'}
(1+Y_{i,j',t}^{-1})^{-a'_{i'j'}} &
\mbox{ if }\eps(i,i')(-1)^{t+1}=1 ; \\
Y_{i,i',t}^{-1} & \mbox{ if } \eps(i,i')(-1)^{t+1}=-1.
\end{array} \right.
\end{equation}
Thus, if we put $Y_{i,i',t}=Y_{ii'}$, then we have
\[
Y_{i,i',t+1}=\tau_{(-1)^{t+1}}(Y_{ii'}) \ko
\]
for all $i\in I$ and $i'\in I$,
as we see by comparing \ref{eq:normalized-Y-system} with
\ref{eq:automorphism-tau-eps}.
Now we set $Y_{i,i',0}=Y_{ii'}$ for $i\in I$, $i'\in I'$.
By induction on $t$, we obtain, for all $t\geq 0$ and all
$i\in I$ and $i'\in I'$,
\[
Y_{i,i', t} = (\tau_-\tau_+ \ldots \tau_{\pm})(Y_{ii'}) \ko
\]
where the number of factors $\tau_+$ and $\tau_-$ equals $t$.
In particular,
we obtain $Y_{i,i',2t}=\phi^t(Y_{i,i',0})$ for all $t\geq 0$,
$i\in I$ and $i'\in I'$, which clearly implies the assertion.
\end{proof}

\subsection{Plan of the proof} \label{ss:plan-of-the-proof}
We refer to the respective sections in the body of the paper for
detailed explanations of the notions appearing in the following plan.

Let $\Delta$ and $\Delta'$ be two Dynkin diagrams. In section~\ref{s:non-simply-laced-case},
we use the standard folding technique to reduce the conjecture to the case
where $\Delta$ and $\Delta'$ are simply laced, which we assume from
now on. We choose alternating quivers
$Q$ and $Q'$ (\cf section~\ref{ss:quiver-mutation}) whose underlying
graphs are $\Delta$ and $\Delta'$. We define the square
product $Q\square Q'$ and the triangle product $Q\boxtimes Q'$
as certain quivers whose vertex set is the product of the
vertex sets of $Q$ and $Q'$ (\cf section~\ref{ss:products-of-quivers}).
We associate canonical sequences of mutations $\mu_\square$ and $\mu_\boxtimes$
to these products. These yield restricted $Y$-patterns
$\mathbf{y}_\square$ and $\mathbf{y}_\boxtimes$, \cf section~\ref{ss:reformulation}.
Let $\phi$ be as defined in equation~\ref{eq:automorphism-phi}.

\smallskip\noindent
{\em Step 1.} We have $\phi^{h+h'}=\id$ iff the restricted $Y$-pattern $\mathbf{y}_\square$
is periodic of period dividing $h+h'$ iff this holds for the restricted
$Y$-pattern $\mathbf{y}_\boxtimes$.
\smallskip

This step is proved in section~\ref{ss:reformulation} by adapting the
methods of section~8 of \cite{FominZelevinsky07}. Notice however that
in the case $\Delta'=A_1$ considered there, the systems
$\mathbf{y}_\square$ and $\mathbf{y}_\boxtimes$ are indistinguishable.

\smallskip\noindent
{\em Step 2.} The restricted $Y$-pattern $\mathbf{y}_\boxtimes$ is periodic
of period dividing $h+h'$ iff such a periodicity holds for the sequences
of the tropical $Y$-variables and of $F$-polynomials
(\cf section~\ref{ss:trop-Y-var-F-pol}) associated with
the sequence of mutations $\mu_\boxtimes^p$, $p\in\Z$.
\smallskip

This follows from Proposition~3.12 of \cite{FominZelevinsky07}, \cf
sections~\ref{ss:trop-Y-var-F-pol} and \ref{ss:periodicity-from-that-of-tropical-Y-variables}.
We refer to sections~\ref{ss:KrullSchmidtCategories} and \ref{ss:CalabiYauCategories}
for the terminology used in the following step.

\smallskip\noindent
{\em Step 3.} There is  a triangulated $2$-Calabi-Yau category $\cc$
with a cluster-tilting object $T$ whose endoquiver (=quiver of its
endomorphism algebra) is $Q\boxtimes Q'$.
\smallskip

We construct the category $\cc$ as the (generalized) cluster
category in the sense of Amiot~\cite{Amiot09} associated with
the tensor product $kQ\ten kQ'$ of the path algebras
of the quivers $Q$ and $Q'$, \cf section~\ref{ss:2-CY-realizations-of-triangle-products}.
Thanks to Iyama-Yoshino's results \cite{IyamaYoshino08}, there
is a well-defined mutation operation for cluster tilting
objects in arbitrary $2$-Calabi-Yau categories (\cf section~\ref{ss:decategorification:quiver-mutation}).

\smallskip\noindent
{\em Step 4.} When we apply powers of the sequence of mutations $\mu_\boxtimes$
to the cluster tilting object $T$, no loops or $2$-cycles appear in the endoquivers
of the mutated cluster tilting objects.
\smallskip

We prove this by an explicit computation: First we show that $\cc$ is
equivalent to the cluster category associated \cite{Amiot09} with a Jacobi-finite
quiver with potential of the form $(Q\boxtimes Q', W)$ and that under
this equivalence, the object $T$ corresponds to the canonical cluster tilting
object (section~\ref{ss:description-via-quiver-with-potential}). Then we show that
when we perform the sequence of mutations $\mu_\boxtimes$ on the
quiver with potential $(Q\boxtimes Q',W)$ following \cite{DerksenWeymanZelevinsky08}
(\cf section~\ref{ss:reminder-quivers-with-potential}) no loops or $2$-cycles
appear in the mutated quivers with potential. Another proof of this step,
based on \cite{Amiot09}, was given in Proposition~4.35
of \cite{InoueIyamaKunibaNakanishiSuzuki10}.

\smallskip\noindent
{\em Step 5.} If there is an isomorphism $\mu_\boxtimes^{h+h'}(T) \cong T$, then
periodicity holds for the tropical $Y$-variables and the $F$-polynomials
associated with the sequence $\mu_\boxtimes^p$, $p\in\Z$.
\smallskip

This is `decategorification'. Thanks to steps~3 and 4, it follows essentially
from Palu's multiplication formula \cite{Palu08a} for the generalized Caldero-Chapoton map
\cite{CalderoChapoton06}, \cf sections~\ref{ss:decategorification:g-vectors-tropical-Y-variables}
and \ref{ss:decategorification:cluster-variables-F-polynomials}. Another
proof of this step is given in Proposition~4.35 of \cite{InoueIyamaKunibaNakanishiSuzuki10}.

\smallskip\noindent
{\em Step 6.} We have $\mu_{\boxtimes}(T)=\Za(T)$, where $\Za:\cc\to\cc$ is
the Zamolodchikov transformation.
\smallskip

The Zamolodchikov transformation, defined in section~\ref{ss:Zamolodchikov-transformation},
should be viewed as a categorification of the automorphism $\phi$ of equation~\ref{eq:automorphism-phi}.
If $\Delta'$ equals $A_1$, it coincides with the (inverse) Auslander-Reiten translation
(and also with the loop functor of the category $\cc$).

\smallskip\noindent
{\em Step 7.} We have $\Za^{h+h'}=\id$ and therefore $\mu_\boxtimes^{h+h'}(T)\cong T$.
\smallskip

This follows easily from the $2$-Calabi-Yau property of the category $\cc$
and its construction from the tensor product of the path algebras of $Q$ and $Q'$,
\cf section~\ref{ss:order-of-the-Zamolodchikov-transformation}. Thanks to
steps~5 and 2, this concludes the proof, \cf section~\ref{ss:conclusion}.

\section{Reformulation of the conjecture in terms of cluster combinatorics}
\label{s:reformulation}

\subsection{Quiver mutation} \label{ss:quiver-mutation}
A {\em quiver} $Q$ is an oriented graph
given by a set of vertices $Q_0$, a set of arrows $Q_1$ and
two maps $s: Q_1 \to Q_0$ and $t: Q_1 \to Q_0$ taking an arrow
to its source respectively its target. A quiver $Q$ is {\em finite} if
the sets $Q_0$ and $Q_1$ are finite. A vertex $i$ of a quiver
is a {\em source} (respectively, a {\em sink}) if there are
no arrows $\alpha$ with target $i$ (respectively, with source $i$).
A quiver is {\em alternating} if each of its vertices is
a source or a sink. A {\em Dynkin quiver} is a quiver
whose underlying graph is a Dynkin diagram of type
$A_n$, $n\geq 1$, $D_n$, $n\geq 4$, $E_6$, $E_7$ or $E_8$.

Let $Q$ be a quiver.
A {\em loop} of $Q$ is an arrow $\alpha$ whose source and target
coincide. A {\em $2$-cycle} of $Q$ is a pair of distinct
arrows $\beta$ and $\gamma$ such that $s(\beta)=t(\gamma)$
and $t(\beta)=s(\gamma)$.

Let $Q$ be a finite quiver without loops or $2$-cycles. Let
$k$ be a vertex of $Q$. Following Fomin-Zelevinsky \cite{FominZelevinsky02},
we define the {\em mutated quiver} $\mu_k(Q)$: It has
the same set of vertices as $Q$; its set of arrows is obtained
from that of $Q$ as follows:
\begin{itemize}
\item[1)] for each subquiver $\xymatrix{i \ar[r] & k \ar[r] & j}$,
add a new arrow $\xymatrix{i \ar[r] & j}$;
\item[2)] reverse all arrows with source or target $k$;
\item[3)] remove the arrows in a maximal set of pairwise
disjoint $2$-cycles.
\end{itemize}
It is not hard to check that we have $\mu_k^2(Q)=Q$ for each vertex $k$.
For example, the following two quivers are obtained from each other by
mutating at the black vertex
\begin{equation} \label{eq:product-a2-by-a2}
\xymatrix{ \bullet \ar[r] & \circ \ar[ld] \\
\circ \ar[u] \ar[r] & \circ \ar[u] }
\quad\quad\quad
\xymatrix{ \bullet \ar[d] & \circ \ar[l] \\
\circ \ar[r] & \circ \ar[u]} \quad \raisebox{-0.5cm}{.}
\end{equation}
From now and until the end of this section, we assume
that the set of vertices of $Q$ is the set of integers
$1$, \ldots, $n$ for some $n\geq 1$.

There is a skew-symmetric integer matrix $B$ associated
with $Q$ defined such that $b_{ij}$ is the difference of
the number of arrows from $i$ to $j$ minus the number of
arrows from $j$ to $i$ in $Q$. Clearly, the map taking
$Q$ to $B$ establishes a bijection between the quivers
without loops or $2$-cycles with vertex set $\{1, \ldots, n\}$
(modulo isomorphisms which are the identity on the vertices)
and the skew-symmetric integer $n\times n$-matrices.
Via this bijection, the above operation of quiver mutation
corresponds to matrix mutation as originally defined
by Fomin-Zelevinsky \cite{FominZelevinsky02}.
According to \cite{FominZelevinsky07},
the matrix $B'$ corresponding to the mutated quiver is given by
\begin{equation} \label{eq:matrix-mutation}
b'_{ij} =\left\{ \begin{array}{ll} -b_{ij} & \mbox{if $i=k$ or $j=k$} \\
b_{ij}+\sgn(b_{ik}) \max(0, b_{ik}b_{kj}) & \mbox{otherwise.}\end{array} \right.
\end{equation}

Let $\mathbb{T}_n$ be the {\em $n$-regular tree:} Its edges
are labeled by the integers $1$, \ldots, $n$ such that
the $n$ edges emanating from each vertex carry different
labels. Let $t_0$ be a vertex of $\mathbb{T}_n$.
To each vertex $t$ of $\mathbb{T}_n$ we associate
a quiver $Q(t)$
such that at $t=t_0$, we have $Q(t)=Q$
and whenever $t$ is linked to $t'$ by an edge labeled $i$, we
have $Q(t')=\mu_i Q(t)$. The family of quivers $Q(t)$, where
$t$ runs through the vertices of $\mathbb{T}_n$ is the {\em quiver pattern}
associated with $Q$.

\subsection{$Y$-seeds} \label{ss:Y-seeds} We follow \cite{FominZelevinsky07}.
Let $n\geq 1$ be an integer. A {\em $Y$-seed} is a pair $(Q,Y)$ formed by a finite
quiver $Q$ without loops or $2$-cycles with vertex set
$\{1, \ldots, n\}$ and by a free generating set $Y=\{Y_1, \ldots, Y_n\}$
of the field $\Q(y_1, \ldots, y_n)$ generated over $\Q$
by indeterminates $y_1, \ldots, y_n$. If $(Q,Y)$ is a $Y$-seed
and $k$ a vertex of $Q$, the {\em mutated $Y$-seed} $\mu_k(Q,Y)$
is the $Y$-seed $(Q',Y')$ where $Q'=\mu_k(Q)$ and, for $1\leq j\leq n$, we have
\[
Y'_j = \left\{ \begin{array}{ll} Y_k^{-1} & \mbox{if $j=k$;} \\
              Y_j (1+Y_k^{-1})^{-m} & \mbox{if there are $m\geq 0$ arrows $k\to j$} \\
              Y_j (1+Y_k)^m & \mbox{if there are $m\geq 0$ arrows $j\to k$}
              \end{array} \right.
\]
One checks that $\mu_k^2(Q,Y)=(Q,Y)$.
For example, the following $Y$-seeds are related by a mutation at
the vertex $1$
\[
\xymatrix{ y_1 \ar[r] & y_2 \ar[ld] \\ y_3 \ar[u] \ar[r] & y_4 \ar[u]} \quad\quad\quad
\xymatrix{ 1/y_1 \ar[d] & y_2/(1+y_1^{-1}) \ar[l] \\ y_3(1+y_1) \ar[r] & y_4\ar[u]} \ko
\]
where we write the variable $Y_i$ in place of the vertex $i$.

Let $Q$ be a finite quiver without loops or $2$-cycles with
vertex set $\{1, \ldots, n\}$. The {\em initial $Y$-seed} associated
with $Q$ is $(Q, \{y_1, \ldots, y_n\})$. The {\em $Y$-pattern} associated
with $Q$ is the family of $Y$-seeds $(Q(t), Y(t))$ indexed by
the vertices $t$ of the $n$-regular tree $\mathbb{T}_n$
(\cf section~\ref{ss:quiver-mutation}) such
that at the chosen initial vertex $t_0$, the $Y$-seed
$(Q(t_0),Y(t_0))$ is the initial $Y$-seed associated with $Q$
and whenever two vertices $t$ and $t'$ are linked by an edge labeled $k$, we have
$(Q(t'), Y(t'))=\mu_k(Q(t), Y(t))$.

Let $\mathbf{v}$ be a sequence
of vertices $v_1, \ldots, v_N$ of $Q$. We assume that the composed
mutation
\[
\mu_{\mathbf{v}} = \mu_{v_N} \ldots \mu_{v_2} \mu_{v_1}
\]
transforms $Q$ into itself. Then clearly the same holds for the
inverse sequence
\[
\mu_{\mathbf{v}}^{-1} = \mu_{v_1} \mu_{v_2} \ldots \mu_{v_N}.
\]
Now the {\em restricted $Y$-pattern} associated with $Q$ and $\mu_{\mathbf{v}}$
is the sequence of $Y$-seeds obtained from the initial $Y$-seed
$\mathbf{y}_0$ associated with $Q$ by applying all integer powers
of $\mu_{\mathbf{v}}$. Thus this pattern is given by a sequence of seeds
$\mathbf{y}_p$, $p\in \Z$, such that $\mathbf{y}_0$ is the initial
$Y$-seed associated with $Q$ and, for all $p\in \Z$,
$\mathbf{y}_{p+1}$ is obtained from $\mathbf{y}_p$ by the sequence
of mutations $\mu_{\mathbf{v}}$.

\subsection{Products of quivers} \label{ss:products-of-quivers}
Let $Q$ and $Q'$ be two finite
quivers without oriented cycles. We define the {\em tensor product $Q\ten Q'$}
to be the quiver whose set of vertices is the product $Q_0\times Q_0'$ and
where the number of arrows from a vertex $(i,i')$ to a vertex $(j,j')$
\begin{itemize}
\item[a)] is zero if $i\neq j$ and $i'\neq j'$;
\item[b)] equals the number of arrows from $j$ to $j'$ if $i=i'$;
\item[c)] equals the number of arrows from $i$ to $i'$ if $j=j'$.
\end{itemize}
Thus, for each vertex $i'$, the full subquiver of $Q\ten Q'$ formed by the
vertices $(i,i')$, $i\in Q_0$, is isomorphic to $Q$ by an isomorphism
taking $(i,i')$ to $i$ and similarly, for each vertex $i$ of $Q$,
the full subquiver on the vertices $(i,i')$, $i'\in Q'_0$, is
isomorphic to $Q'$ by an isomorphism taking $(i,i')$ to $i'$.
In Lemma~\ref{lemma:quiver-of-tensor-product} below, we will see that
the path algebra of a tensor product of two quivers is the tensor product
of the path algebras.
The tensor product of the quivers
\begin{align*}
\vec{A}_4 &: \xymatrix{ 1 & 2 \ar[l] \ar[r] & 3 & 4 \ar[l] } \ko\\
\vec{D}_5 &: \raisebox{0.75cm}{\xymatrix@R=0.2cm{   &  &  & 4 \ar[dl] \\ 1 & 2 \ar[l] \ar[r] & 3 & \\
& & & 5. \ar[ul] }}
\end{align*}
is depicted in figure~\ref{fig:a4-ten-d5}.

\begin{figure}
\[
\xymatrix@C=0.2cm@R=0.5cm{ & \circ \ar[dd] &  &  & \circ \ar[lll]
\ar[dd] \ar[rrr] & & &
\circ \ar[dd]  & & & \circ \ar[lll] \ar[dd] \\
     \circ  \ar[rd] & &  & \circ \ar[lll]|!{[lld];[llu]}\hole \ar[rrr]|!{[ru];[rd]}\hole \ar[rd] & & &
\circ \ar[rd] & & & \circ \ar[rd] \ar[lll]|!{[lld];[llu]}\hole & &  \\
           & \circ   & & & \circ \ar[lll] \ar[rrr] & & &
\circ  & & & \circ \ar[lll] & \\
           & \circ \ar[u] \ar[d] & & & \circ \ar[u] \ar[d] \ar[lll] \ar[rrr] & & &
\circ \ar[u] \ar[d] & & & \circ \ar[u] \ar[d] \ar[lll] & \\
           & \circ    & & & \circ  \ar[lll] \ar[rrr] &  & &
\circ  & & & \circ \ar[lll] & }
\]
\caption{The quiver $\vec{A}_4\ten\vec{D}_5$}
\label{fig:a4-ten-d5}
\end{figure}
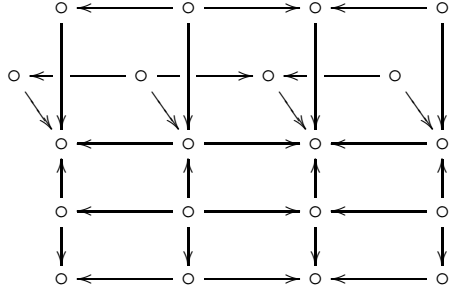

We define the {\em triangle product $Q\boxten Q'$} to be the
quiver obtained from $Q\otimes Q'$ by adding $rr'$ arrows
from $(j,j')$ to $(i,i')$ whenever $Q$ contains $r$ arrows
from $i$ to $j$ and $Q'$ contains $r'$ arrows from
$i'$ to $j'$. For example, the triangle product of the
quivers $\vec{A}_4$ and $\vec{D}_5$
is depicted in figure~\ref{fig:a4-prod-d5}.
We will see a representation-theoretic interpretation
of the triangle product in
Corollary~\ref{cor:cluster-category-Hom-finite} below.

Now assume that $Q$ and $Q'$ are {\em alternating}, \ie each
vertex is a source or a sink.
For example, the above quivers $\vec{A}_4$ and $\vec{D}_5$ are alternating.
We define the {\em square product $Q\square Q'$}
to be the quiver obtained from $Q\ten Q'$ by reversing all arrows in
the full subquivers of the form $\{i\}\times Q'$ and $Q\times \{i'\}$,
where $i$ is a sink of $Q$ and $i'$ a source of $Q'$.
The square product
of the above quivers $\vec{A}_4$ and $\vec{D}_5$ is depicted in
figure~\ref{fig:a4-prod-d5}. The triangle product and the square
product of two copies of the quiver $\vec{A}_2: \circ \to \circ$
are given in \ref{eq:product-a2-by-a2}.

\begin{lemma} \label{lemma:triangle-mutates-to-square} Let $Q$ and $Q'$
be alternating and let $M$ be the set of vertices $(i,i')$ of $Q\square Q'$
such that $i$ is a sink of $Q$ and $i'$ a source of $Q'$. Then there are
no arrows between any two vertices of $M$ and the composition of the
mutations at the vertices of $M$ transforms $Q\square Q'$ into $Q\boxtimes Q'$.
\end{lemma}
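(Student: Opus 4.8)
The plan is to verify the two assertions by a direct, local analysis of the quiver mutation, exploiting that the vertices of $M$ are pairwise non-adjacent.

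First I would settle the claim that there are no arrows between two vertices of $M$. In $Q\square Q'$, as in $Q\ten Q'$, every arrow joins two vertices differing in exactly one coordinate; reversing arrows (which is all the square product does to $Q\ten Q'$) does not change this. Hence an arrow between two vertices of $M$ would be either a horizontal arrow, coming from an arrow of $Q$ between two sources, or a vertical arrow, coming from an arrow of $Q'$ between two sinks. Since $Q$ and $Q'$ are alternating, neither occurs. As the vertices of $M$ are pairwise non-adjacent, the corresponding entries $b_{kl}$ of the exchange matrix vanish, so by the mutation formula \eqref{eq:matrix-mutation} the mutations $\mu_{(i,i')}$, $(i,i')\in M$, commute pairwise and their composition $\mu_M$ is well defined. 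Moreover, since (as we check below) every arrow created by such a mutation joins a source$\times$source vertex to a sink$\times$sink vertex, mutating at one vertex of $M$ leaves the neighbourhood of any other vertex of $M$ unchanged; it therefore suffices to understand $\mu_M$ one vertex of $M$ at a time.

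Next I would describe the local picture at a fixed $m=(i,i')\in M$, that is, $i$ a source of $Q$ and $i'$ a sink of $Q'$. Unwinding the definition of the square product together with the alternating hypothesis, the arrows of $Q\square Q'$ incident to $m$ are: the arrows of the horizontal copy of $Q$ through $m$, which (the row $i'$ being a sink of $Q'$, hence not reversed) point out of $m$, towards the sink$\times$sink vertices $(j,i')$ with $i\to j$ in $Q$; and the arrows of the vertical copy of $Q'$ through $m$, which (the column $i$ being a source of $Q$, hence not reversed) point into $m$, from the source$\times$source vertices $(i,j')$ with $j'\to i'$ in $Q'$. In particular every oriented path of length two through $m$ has the form $(i,j')\to m\to(j,i')$, running from a source$\times$source vertex to a sink$\times$sink vertex. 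Reading off the mutation: step~1) adds, for each such path, an arrow $(i,j')\to(j,i')$; as $m$ runs through $M$ and the two arrows run through $Q$ and $Q'$, these new arrows are exactly the diagonal arrows added in the triangle product, with the correct multiplicities $rr'$, each determined by a unique such path. Step~2) reverses the horizontal and vertical arrows at $m$; these are precisely the arrows on which $Q\square Q'$ and $Q\boxtimes Q'$ disagree, and after reversal they acquire the orientation of the triangle product. The remaining arrows of $Q\square Q'$—the horizontal arrows in the source rows and the vertical arrows in the sink columns—are incident to no vertex of $M$, are thus untouched by $\mu_M$, and one checks directly that they already agree in $Q\square Q'$ and $Q\boxtimes Q'$. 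Assembling the three kinds of arrows gives $\mu_M(Q\square Q')=Q\boxtimes Q'$; the case $Q=Q'=\vec{A}_2$ is the single mutation displayed in \eqref{eq:product-a2-by-a2}.

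The step I expect to require the most care is step~3) of the mutation, the cancellation of $2$-cycles: one must make sure that the diagonals created by the various mutations neither cancel one another nor cancel against a reversed arrow of $Q\ten Q'$. This is where the source/sink bookkeeping pays off. Each new arrow runs from a source$\times$source vertex to a sink$\times$sink vertex, so no two of them are antiparallel and no loops appear; and since $Q\ten Q'$ has no arrows at all between vertices differing in both coordinates, each new diagonal is the only arrow between its two endpoints. Hence no $2$-cycles (or loops) are produced, step~3) removes nothing, and the identification $\mu_M(Q\square Q')=Q\boxtimes Q'$ holds on the nose rather than merely up to $2$-cycles.
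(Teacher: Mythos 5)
Your analysis of the local structure of $Q\square Q'$ at a vertex $m=(i,i')$ of $M$ is correct: the horizontal arrows at $m$ point out of $m$ towards the (sink,sink) vertices $(j,i')$, the vertical arrows point into $m$ from the (source,source) vertices $(i,j')$, and so the mutation at $m$ creates arrows $(i,j')\to (j,i')$ and reverses the arrows incident to $m$. The gap is in the comparison with $Q\boxtimes Q'$, and it occurs in all three places where you assert agreement. First, the diagonal of the triangle product attached to $\alpha\colon i\to j$ and $\alpha'\colon i'\to j'$ runs from $(j,j')$ to $(i,i')$ (see the square displayed in section~\ref{ss:description-via-quiver-with-potential} and Figure~\ref{fig:a4-prod-d5}), that is, from a (sink,sink) vertex to a (source,source) vertex; the arrows your mutation creates run the opposite way. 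Second, since the row through $m$ is indexed by a sink of $Q'$ and the column by a source of $Q$, neither is reversed in forming $Q\square Q'$, so the arrows incident to $m$ already agree with $Q\ten Q'$, hence with the non-diagonal part of $Q\boxtimes Q'$; step~2) of the mutation destroys this agreement rather than producing it. Third, the arrows you declare already in agreement because they are untouched --- the horizontal arrows in the rows indexed by sources of $Q'$ and the vertical arrows in the columns indexed by sinks of $Q$ --- are precisely the arrows that the square product reverses, so they disagree with $Q\boxtimes Q'$.

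The net result of your computation is that the composed mutation at the vertices of $M$ carries $Q\square Q'$ to $(Q\boxtimes Q')^{op}$, not to $Q\boxtimes Q'$: already for $Q=Q'=\vec{A}_2$, the square product is an oriented four-cycle, and mutating it at its unique (source,\,sink) vertex yields the opposite of $\vec{A}_2\boxtimes\vec{A}_2$. To land on $Q\boxtimes Q'$ itself one must mutate at the vertices $(i,i')$ with $i$ a \emph{sink} of $Q$ and $i'$ a \emph{source} of $Q'$: there the square product has vertical arrows entering from (sink,sink) vertices and horizontal arrows leaving towards (source,source) vertices, so the created diagonals acquire the correct orientation and the reversal restores exactly the rows and columns that $\square$ had reversed. (This is the set actually marked by $\bullet$ in Figure~\ref{fig:a4-prod-d5}, so the text of the lemma and the figure are themselves at odds.) A correct write-up must either prove the statement for this corrected set $M$, or record that the stated one produces the opposite quiver; asserting that the orientations match when they do not is the step that fails.
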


We leave the easy proof to the reader. In figure~\ref{fig:a4-prod-d5}, the
vertices belonging to $M$ are marked by $\bullet$.

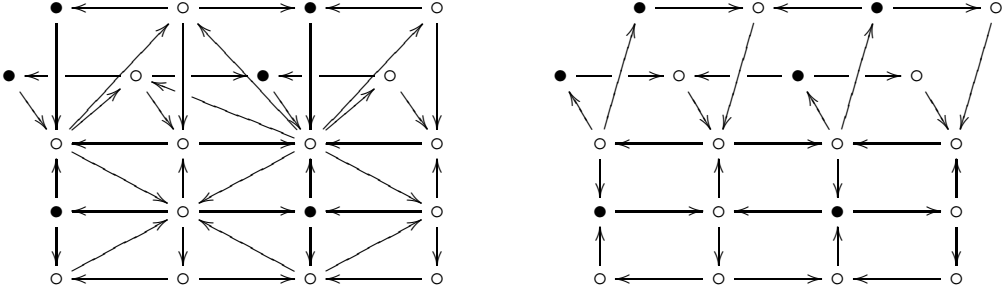
\begin{figure}
\[
\xymatrix@C=0.2cm@R=0.5cm{ & \bullet \ar[dd] &  &  & \circ \ar[lll]
\ar[dd] \ar[rrr] & & &
\bullet \ar[dd]  & & & \circ \ar[lll] \ar[dd] \\
     \bullet  \ar[rd] & &  & \circ \ar[lll]|!{[lld];[llu]}\hole \ar[rrr]|!{[ru];[rd]}\hole \ar[rd] & & &
\bullet \ar[rd] & & & \circ \ar[rd] \ar[lll]|!{[lld];[llu]}\hole & &  \\
           & \circ \ar[rrrd] \ar[urr] \ar[rrruu]  & & & \circ \ar[lll] \ar[rrr] & & &
\circ \ar[llluu] \ar[rru]  \ar[rrruu] \ar[llld] \ar[rrrd] \ar[llllu]|!{[lll];[llluu]}\hole & & & \circ \ar[lll] & \\
           & \bullet \ar[u] \ar[d] & & & \circ \ar[u] \ar[d] \ar[lll] \ar[rrr] & & &
\bullet \ar[u] \ar[d] & & & \circ \ar[u] \ar[d] \ar[lll] & \\
           & \circ \ar[rrru]   & & & \circ  \ar[lll] \ar[rrr] &  & &
\circ \ar[lllu] \ar[rrru] & & & \circ \ar[lll] & }
\quad\quad
\xymatrix@C=0.1cm@R=0.5cm{ & & \bt \ar[rrr] &  &  & \circ \ar[ldd]
& & &
\bt \ar[lll] \ar[rrr] & & & \circ \ar[ldd] \\
     \bt \ar[rrr]|!{[dr];[urr]}\hole & &  & \circ \ar[rd] & & &
\bt \ar[lll]|!{[lu];[dll]}\hole \ar[rrr]|!{[rd];[rru]}\hole & & & \circ \ar[rd] & &  \\
           & \circ \ar[lu] \ar[ruu] \ar[d] & & & \circ \ar[lll] \ar[rrr] & & &
\circ \ar[ruu] \ar[lu] \ar[d] & & & \circ \ar[lll] & \\
           & \bt \ar[rrr] & & & \circ \ar[d] \ar[u] & & &
\bt \ar[lll] \ar[rrr] & & & \circ \ar[u] \ar[d] & \\
           & \circ \ar[u] & & & \circ \ar[lll] \ar[rrr] &  & &
\circ \ar[u] & & & \circ \ar[lll] & }
\]
\caption{The quivers $\vec{A}_4\boxten\vec{D}_5$ and $\vec{A}_4 \square \vec{D}_5$}
\label{fig:a4-prod-d5}
\end{figure}

\subsection{Reformulation of the conjecture}
\label{ss:reformulation} Let $\Delta$ and $\Delta'$ be simply
laced Dynkin diagrams. We choose alternating quivers $Q$ and $Q'$ whose
underlying graphs are $\Delta$ and $\Delta'$. If $i$ is a vertex of
$Q$ or $Q'$, we put $\eps(i)=1$ if
$i$ is a source and $\eps(i)=-1$ if $i$ is a sink. For example, we
can consider the quivers $\vec{A}_4$ and $\vec{D}_5$ of section~\ref{ss:products-of-quivers}.
For two elements $\sigma$, $\sigma'$ of $\{+,-\}$, we define the
following composed mutation of $Q\square Q'$:
\[
\mu_{\sigma, \sigma'} = \prod_{ \eps(i)=\sigma, \eps(i')=\sigma'}
\mu_{(i,i')}.
\]
Notice that there are no arrows between any two vertices of the
index set so that the order in the product does not matter. Then it
is easy to check that $\mu_{+,+} \mu_{-,-}$ and $\mu_{-,+}
\mu_{+,-}$ both transform $Q\square Q'$ into $(Q\square Q')^{op}$
and vice versa. Thus the composed sequence of mutations
\[
\mu_{\square}=\mu_{-,-} \mu_{+,+} \mu_{-,+} \mu_{+,-}
\]
transforms $Q\square Q'$ into itself. We define the {\em $Y$-system
$\mathbf{y}_\square$} associated with $Q\square Q'$ to be the
restricted $Y$-pattern (\cf section~\ref{ss:Y-seeds}) associated with $Q\square Q'$ and
$\mu_\square$. The following lemma is inspired from section~8 of \cite{FominZelevinsky07}.

\begin{lemma} The periodicity conjecture holds for $\Delta$ and $\Delta'$
if and only if the $Y$-system $\mathbf{y}_\square$ is periodic of period dividing
$h+h'$.
\end{lemma}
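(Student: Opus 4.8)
The plan is to route everything through Lemma~\ref{lemma:periodicity-from-finite-order}, which already reduces the periodicity conjecture for $(\Delta,\Delta')$ to the single equation $\phi^{h+h'}=\id$ (that is, $\phi$ has finite order dividing $h+h'$). So it suffices to prove that $\phi^{h+h'}=\id$ holds if and only if $\mathbf{y}_\square$ is periodic of period dividing $h+h'$. Since the quiver underlying each $\mathbf{y}_p$ returns to $Q\square Q'$ after every application of $\mu_\square$, the whole pattern is generated by iterating $\mu_\square$ on $\mathbf{y}_0$, and periodicity of period dividing $h+h'$ is equivalent to the single identity $\mathbf{y}_{h+h'}=\mathbf{y}_0$, \ie to $\mu_\square^{h+h'}$ fixing the initial $Y$-seed; because the quiver part has already returned to $Q\square Q'$, this is purely an equality of $Y$-variables. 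Thus the heart of the matter is to identify, under $y_{(i,i')}\leftrightarrow Y_{ii'}$, the automorphism of $\Q(y_{(i,i')})$ induced by one application of $\mu_\square$ with the automorphism $\phi$ of $K$.

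To establish this identification I would decompose $\mu_\square=\mu_{-,-}\mu_{+,+}\cdot\mu_{-,+}\mu_{+,-}$ into two half-mutations: the first, $\mu^{(-)}:=\mu_{-,+}\mu_{+,-}$, mutates simultaneously at all vertices $(i,i')$ with $\eps(i,i')=-1$, and the second, $\mu^{(+)}:=\mu_{-,-}\mu_{+,+}$, at all vertices with $\eps(i,i')=+1$, so that $\mu_\square=\mu^{(+)}\mu^{(-)}$. The first thing to check is that vertices of the same $\eps$-type are pairwise non-adjacent in $Q\square Q'$: in the square product two vertices are joined by an arrow only if they share a row or a column, and adjacent vertices of the alternating quivers $Q$, $Q'$ carry opposite source/sink signs, so a common row or column forces opposite $\eps$. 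Hence the individual mutations within each half commute and each half is a genuine simultaneous mutation. Next I would read off the orientations of $Q\square Q'$ from its source/sink pattern and arrow reversals: the horizontal arrows at a given vertex encode the entries $a_{ij}$ and the vertical ones encode $a'_{i'j'}$, while the orientations decide whether a neighbour $k$ contributes a factor $(1+Y_k)$ or $(1+Y_k^{-1})$ and with which sign in the exponent. A crucial bookkeeping point is that $\mu^{(-)}$ turns $Q\square Q'$ into $(Q\square Q')\op$, so that $\mu^{(+)}$ acts on the opposite quiver; the reversal of all arrows is exactly what forces the second half to produce the correct factors.

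Matching the resulting composite transformation with $\phi=\tau_-\tau_+$ term by term is the main obstacle. Individually, neither half-mutation equals $\tau_+$ or $\tau_-$ on the nose: the $Y$-seed rule inverts the variable at each mutated vertex and produces factors $(1+Y_k^{-1})^{-m}$ where $\tau_\eps$ carries $(1+Y_k)^{m}$, and the two differ by a monomial through $(1+Y^{-1})^{-1}=Y(1+Y)^{-1}$. The point, exactly as in section~8 of \cite{FominZelevinsky07}, is that these discrepancies cancel when the two halves are composed: the variables inverted by $\mu^{(-)}$ are re-inverted by $\mu^{(+)}$, and the monomial corrections telescope, so that the full $\mu_\square$ reproduces $\phi$ on the nose (or its inverse, which is immaterial for the order). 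Carrying out this cancellation is a direct but sign-delicate computation, and it is the one genuinely technical step. Once it is done, one application of $\mu_\square$ induces $\phi$, hence $\mu_\square^{h+h'}$ induces $\phi^{h+h'}$, the identity $\mathbf{y}_{h+h'}=\mathbf{y}_0$ is equivalent to $\phi^{h+h'}=\id$, and the lemma follows from Lemma~\ref{lemma:periodicity-from-finite-order}.
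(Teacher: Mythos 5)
Your skeleton is the paper's: reduce to Lemma~\ref{lemma:periodicity-from-finite-order}, split $\mu_\square$ into the two commuting blocks of mutations at the $\eps=\pm1$ classes (your non-adjacency argument is correct and is exactly the paper's parenthetical remark), and match the result against $\tau_\pm$. The claim that one application of $\mu_\square$ induces $\phi^{\pm1}$ is indeed true. But the entire content of the lemma is concentrated in the step you set aside as ``a direct but sign-delicate computation,'' and the mechanism you sketch for it is wrong: $\mu^{(+)}$ mutates at the $\eps=+1$ vertices, so it inverts \emph{those} variables, not the ones inverted by $\mu^{(-)}$, which sit at the $\eps=-1$ vertices. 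Under one application of $\mu_\square$ each variable is inverted exactly once and receives product factors exactly once; nothing is ``re-inverted'' and no monomial corrections telescope between the two halves. As it stands, the key identity is asserted rather than proved, and the heuristic offered for it would not survive being written out.

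The paper avoids composing two rounds of mutation altogether by interleaving: it introduces half-step seeds $\Sigma_t$ (apply $\mu^{(-)}$ and $\mu^{(+)}$ alternately) and compares a \emph{single} half-mutation with a \emph{single} step of the normalized recursion~\ref{eq:normalized-Y-system}, which was designed in the proof of Lemma~\ref{lemma:periodicity-from-finite-order} precisely so that each step inverts half the variables and applies the product formula to the other half, i.e.\ already has the shape of a simultaneous $Y$-seed mutation. The only computation left is the orientation bookkeeping you allude to: every arrow of $Q\square Q'$ joins an $\eps=+1$ vertex to an $\eps=-1$ vertex; at each $\eps=-1$ vertex all horizontal arrows point out and all vertical arrows point in (and conversely at $\eps=+1$ vertices); and each half-mutation reverses all arrows, so the vertices about to receive the product formula always present the configuration that turns the mutation factors $(1+Y_k)^m$ and $(1+Y_k^{-1})^{-m}$ into exactly $(1+Y_{ji'})^{a_{ij}}$ and $(1+Y_{ij'}^{-1})^{-a'_{i'j'}}$. (With the stated ordering of the half-mutations the seeds actually run the normalized system backwards, so $\mu_\square$ induces $\phi^{-1}$ rather than $\phi$; as you note, this is immaterial for the order.) You should either carry out this one-step check explicitly or adopt the interleaved bookkeeping; without one of the two the proof is incomplete.
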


\begin{proof} Let $\Sigma_0$ be the initial $Y$-seed associated with
$Q\square Q'$ and, for $t\geq 1$, define the $Y$-seed $\Sigma_t$ by
\[
\Sigma_t = \left\{ \begin{array}{ll} \mu_{+,-}\mu_{-,+} (\Sigma_{t-1}) &  \mbox{if $t$ is odd;} \\
\mu_{+,+}\mu_{-,-} (\Sigma_{t-1}) & \mbox{if $t$ is even.}
\end{array} \right.
\]
Let $(Y_{i,i',t})$ be the generating set of $\Q(y_1, \ldots, y_n)$ given by $\Sigma_t$.
Then using equation~\ref{eq:normalized-Y-system} we see
that the $Y_{i,i',t}$ are precisely those defined in the proof
of Lemma~\ref{lemma:periodicity-from-finite-order}. Therefore, the periodicity
conjecture translates into the fact that $\Sigma_{2(h+h')}=\Sigma_0$ and
this yields the assertion.
\end{proof}

By lemma~\ref{lemma:triangle-mutates-to-square}, we have
\[
\mu_{-+}(Q\square Q') = Q\boxten Q'.
\]
Therefore, the periodicity of the
restricted $Y$-system $\mathbf{y}_\square$ associated with $Q\square Q'$ and
$\mu_\square$ is equivalent to that of the restricted $Y$-system $\mathbf{y}_\boxten$
associated with $Q\boxten Q'$ and
\begin{equation} \label{eq:inverse-Zamolodchikov-mutation}
\mu_\boxten=\mu_{-,+}  \mu_{+,+}\mu_{-,-}\mu_{+,-} .
\end{equation}
So we finally obtain the following lemma.

\begin{lemma} \label{lemma:reduction-to-y-boxten}
The periodicity conjecture holds for $\Delta$ and $\Delta'$
if and only if the $Y$-system $\mathbf{y}_\boxten$ is periodic of period dividing
$h+h'$.
\end{lemma}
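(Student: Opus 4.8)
The plan is to deduce this statement from the immediately preceding lemma (periodicity of $\mathbf{y}_\square$ is equivalent to the conjecture) by showing that the restricted $Y$-pattern $\mathbf{y}_\boxten$ is, up to a fixed shift, the same sequence of $Y$-seeds as $\mathbf{y}_\square$. The two patterns arise from the \emph{same} alternating quivers $Q$ and $Q'$ via two composed mutations $\mu_\square$ and $\mu_\boxten$ that share all four elementary factors $\mu_{\pm,\pm}$, only cyclically permuted. So the heart of the matter is to observe that a cyclic rotation of the composing factors corresponds to translating the index $p$ of the pattern, and that periodicity of a sequence is unaffected by such a translation.

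First I would record the combinatorial input already supplied: by Lemma~\ref{lemma:triangle-mutates-to-square} the composed mutation $\mu_{+,-}$ (at the vertices of $M$, which are pairwise non-adjacent so the order is immaterial) carries $Q\square Q'$ to $Q\boxten Q'$; equivalently $\mu_{+,-}(Q\square Q')=Q\boxten Q'$ as quivers. This is exactly the conjugating element needed. Comparing the two sequences of mutations,
\[
\mu_\square=\mu_{-,-}\mu_{+,+}\mu_{-,+}\mu_{+,-},\qquad
\mu_\boxten=\mu_{+,-}\mu_{-,-}\mu_{+,+}\mu_{-,+},
\]
one sees that $\mu_\boxten$ is the cyclic rotation of $\mu_\square$ obtained by moving the rightmost factor $\mu_{+,-}$ to the left; formally $\mu_\boxten=\mu_{+,-}\,\mu_\square\,\mu_{+,-}^{-1}$, so $\mu_\boxten$ is conjugate to $\mu_\square$ by the single block $\mu_{+,-}$.

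Next I would translate this conjugacy at the level of $Y$-seeds. Let $\Sigma_0$ denote the initial $Y$-seed of $Q\square Q'$ and set $\Sigma_0'=\mu_{+,-}(\Sigma_0)$, the initial $Y$-seed of $Q\boxten Q'$. Applying $\mu_{+,-}$ to the entire $\mathbf{y}_\square$ pattern and using $\mu_\boxten\,\mu_{+,-}=\mu_{+,-}\,\mu_\square$ (which is precisely the conjugation relation above), an induction on $p$ gives
\[
\mu_\boxten^{\,p}(\Sigma_0')=\mu_{+,-}\bigl(\mu_\square^{\,p}(\Sigma_0)\bigr)
\quad\text{for all }p\in\Z.
\]
Thus $\mathbf{y}_\boxten$ is the image of $\mathbf{y}_\square$ under the fixed invertible transformation $\mu_{+,-}$ applied termwise. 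Since $\mu_{+,-}$ is an involution (as a composition of pairwise commuting single mutations, each its own inverse), it is in particular a bijection on $Y$-seeds, and applying a single fixed bijection to every term of a sequence preserves the period exactly. Hence $\mathbf{y}_\boxten$ is periodic of period dividing $h+h'$ if and only if $\mathbf{y}_\square$ is, and the claim follows by chaining this with the preceding lemma.

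I expect the only genuine subtlety to be bookkeeping in the induction step: one must check that the conjugation identity $\mu_\boxten\,\mu_{+,-}=\mu_{+,-}\,\mu_\square$ holds as an identity of \emph{operations on $Y$-seeds}, not merely on the underlying quivers, and that the non-adjacency of the vertices of $M$ (guaranteed by Lemma~\ref{lemma:triangle-mutates-to-square}) genuinely makes the factors within each block $\mu_{\pm,\pm}$ commute so that the blocks are well defined. Both points are routine once the commutation of disjoint mutations is invoked, which is why the excerpt rightly treats the reduction as essentially immediate from Lemma~\ref{lemma:triangle-mutates-to-square}.
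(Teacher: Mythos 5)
Your strategy coincides with the paper's: the paper likewise derives the lemma from Lemma~\ref{lemma:triangle-mutates-to-square} by noting that $\mu_{+,-}(Q\square Q')=Q\boxten Q'$ and that $\mu_\boxten$ is the cyclic rotation $\mu_{+,-}\,\mu_\square\,\mu_{+,-}^{-1}$ of $\mu_\square$, so that the two restricted patterns are intertwined by the single block $\mu_{+,-}$. The conjugation identity $\mu_\boxten^{\,p}\mu_{+,-}=\mu_{+,-}\mu_\square^{\,p}$, the commutation of the mutations within each block $\mu_{\sigma,\sigma'}$ (the index sets are pairwise non-adjacent), and the fact that a fixed bijection applied termwise preserves periods are all correct.

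There is, however, one genuine gap: you identify $\Sigma_0'=\mu_{+,-}(\Sigma_0)$ with ``the initial $Y$-seed of $Q\boxten Q'$''. That identification is false. By definition the initial $Y$-seed of $Q\boxten Q'$ is $(Q\boxten Q',\{y_1,\dots,y_N\})$, whereas $\mu_{+,-}(\Sigma_0)$ has the correct quiver but nontrivial $Y$-variables: $y_k^{-1}$ at each mutated vertex $k\in M$ and $y_j$ multiplied by factors $(1+y_k^{\pm1})^{\pm m}$ at its neighbours. Hence what your argument actually establishes is that the sequence $p\mapsto \mu_\boxten^{\,p}(\mu_{+,-}(\Sigma_0))$ has the same period as $\mathbf{y}_\square$; this is not yet the sequence $\mathbf{y}_\boxten$, which starts from the genuine initial seed. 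The gap is standard to close: the $Y$-variables $Y'_i$ of $\mu_{+,-}(\Sigma_0)$ form a free generating set of $\Q(y_1,\dots,y_N)$, so $y_i\mapsto Y'_i$ defines a field automorphism $\sigma$; since the $Y$-seed mutation rule is given by universal rational expressions in the current $Y$-variables, $\sigma$ commutes with every mutation, whence $\mu_\boxten^{\,p}(\mu_{+,-}(\Sigma_0))=\sigma\bigl(\mu_\boxten^{\,p}(\Sigma_0^{\boxten})\bigr)$ for all $p$, where $\Sigma_0^{\boxten}$ is the true initial seed of $Q\boxten Q'$. As $\sigma$ is injective, the two sequences have identical periods and your argument then goes through. (The paper is equally terse at this point and leaves the substitution step implicit, but it does not assert the false identification.)
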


\section{More cluster combinatorics}
\label{s:more-cluster-combinatorics}

Let $n\geq 1$ be an integer and $Q$ a finite quiver without
loops or $2$-cycles with vertex set $\{1, \ldots, n\}$.
Let $\mathbb{T}_n$ be the $n$-regular tree and $t_0$
a distinguished vertex of $\mathbb{T}_n$. Let $(Q(t))$
be the quiver pattern associated with $Q$ as in
section~\ref{ss:quiver-mutation}.

\subsection{Tropical $Y$-variables and $F$-polynomials} \label{ss:trop-Y-var-F-pol}
The {\em tropical semifield} $\Trop(y_1, \ldots, y_n)$
generated by the indeterminates $y_1, \ldots, y_n$ is the free
multiplicative group generated by the $y_i$ endowed with
the auxiliary addition defined by
\[
\prod_i y_i^{a_i} \oplus \prod_i y_i^{b_i} = \prod_i y_i^{\min(a_i, b_i)}.
\]

The {\em tropical $Y$-variables
$\eta_i(t)$}, $1\leq i\leq n$, are elements of $\Trop(y_1, \ldots, y_n)$
associated to the vertices $t$ of $\mathbb{T}_n$. They are
defined recursively as follows: we put
\[
\eta_i(t_0)=y_i \ko 1\leq i\leq n \ko
\]
and if $t$ is linked to $t'$ by an edge labeled $k$, we put
\begin{equation} \label{eq:mut-tropical-y-variable}
\eta_j(t')= \left\{ \begin{array}{ll} \eta_j(t)^{-1} & \mbox{if $j=k$; } \\
\eta_j(t) (1\oplus \eta_k(t))^m & \mbox{if there are $m\geq 0$ arrows $j \to k$ in $Q(t)$;} \\
\eta_j(t)/(1\oplus \eta_k(t)^{-1})^m & \mbox{if there are $m\geq 0$ arrows $k\to j$ in $Q(t)$}.
\end{array}\right.
\end{equation}

The {\em $F$-polynomials $F_i(t)$}, $1\leq i\leq n$,
are elements of the polynomial ring
$\Z[y_1, \ldots, y_n]$ associated to the vertices $t$ of $\mathbb{T}_n$.
According to Proposition~5.1 of \cite{FominZelevinsky07},
they can be defined recursively as follows: we put
\[
F_i(t_0)=1 \ko 1\leq i\leq n \ko
\]
and if $t$ is linked to $t'$ by an edge labeled $k$,
then $F_i(t')=F_i(t)$ for all $i\neq k$ and $F_k(t')$ is defined
by the {\em exchange relation}
\begin{equation} \label{eq:exchange-relation-F-polynomials}
F_k(t) F_k(t')= \prod_{c_{ik}>0} y_i^{c_{ik}}\prod_{\stackrel{\mbox{\tiny arrows}}{k\to j}} F_j(t) +
\prod_{c_{ik}<0} y_i^{-c_{ik}}\prod_{\stackrel{\mbox{\tiny arrows}}{i\to k}} F_i(t)\ko
\end{equation}
where the products are taken over the arrows in the quiver $Q(t)$ and the $c_{ik}$
are defined by
\[
\eta_k(t) = \prod_{i=1}^n y_i^{c_{ik}}.
\]

Notice that the group ring of the free multiplicative abelian group
underlying the tropical semifield $\Trop(y_1, \ldots, y_n)$ is
canonically isomorphic to the ring of Laurent polynomials
\[
\Z[y_1^{\pm 1}, \ldots, y_n^{\pm 1}].
\]
Thanks to this identification, it makes sense to multiply elements
of the tropical semifield with elements of the field $\Q(y_1, \ldots, y_n)$.
This is the multiplication that we use in the following proposition.
\begin{proposition}[Proposition 3.12 of \cite{FominZelevinsky07}]
\label{prop:Y-variables}
At each
vertex $t$ of $\mathbb{T}_n$, and for each $1\leq j\leq n$,
we have
\begin{equation} \label{eq:non-trop-y-var}
Y_j(t)= \eta_j(t) \prod_{i=1}^n F_i(t)^{b_{ij}(t)}
\end{equation}
where $(b_{ij}(t))$ is the skew-symmetric integer
matrix associated with $Q(t)$.
\end{proposition}

\subsection{Consequence for the periodicity conjecture}
\label{ss:periodicity-from-that-of-tropical-Y-variables}
Now let $Q$ and $Q'$ be as in section~\ref{ss:reformulation}.
Let $t_0$ be the initial vertex of the $N$-regular tree $\mathbb{T}_{N}$
whose edges are labeled by the vertices of $Q\boxten Q'$.
Let $t_i$, $i\in \Z$, be the sequence of vertices of
$\mathbb{T}_N$ which are visited when performing the
mutations in the integer powers of the composition
\[
\mu_\boxten=\mu_{+,-} \mu_{-,-} \mu_{+,+} \mu_{-,+}
\]
(for each factor $\mu_{\sigma,\sigma'}$, we choose some order
of the mutations). Notice that $\mu_{\boxten}$ contains
exactly one mutation at each of the $N$ vertices of $Q\boxten Q'$.
We already know from section~\ref{ss:products-of-quivers} that
$(Q\boxten Q')(t_{pN})=Q\boxten Q'$ for all $p\in \Z$.
Thus, by the above proposition, the periodicity
conjecture holds for $Q$ and $Q'$ if, for each vertex $j$ of
$Q\boxten Q'$, the sequences $\eta_j(t_{pN})$ and $F_j(t_{pN})$
are periodic in $p$ of period dividing $h+h'$.

\subsection{Cluster variables, $g$-vectors}
\label{ss:cluster-variables-g-vectors}
We will not need cluster variables for the proof of the conjecture.
We nevertheless define them since they are useful
in other applications of the categorical model that we will construct,
notably in the study \cite{InoueIyamaKunibaNakanishiSuzuki10} of
$T$-systems.  We will use the categorical lift of the $g$-vectors to express
the tropical $Y$-variables (cf.~Corollary~\ref{cor:decat-tropical-Y-var} and
Corollary~\ref{cor:decat-tropical-Y-variables}). In this way, the $g$-vectors do
play a role in our proof.

The {\em cluster variables $X_i(t)$, $1\leq i\leq n$}, associated
to the vertices $t$ of $\mathbb{T}_n$ lie in the field $\Q(x_1, \ldots, x_n)$
generated by $n$ indeterminates $x_1$, \ldots, $x_n$; they are
defined recursively by
\begin{equation}
X_i(t_0) = x_i \ko 1 \leq i\leq n \ko
\end{equation}
and if $t$ is linked to $t'$ by an edge labeled $k$, then
$X_i(t')=X_i(t)$ for all $i\neq k$ and $X_k(t')$ is determined
by the {\em exchange relation}
\begin{equation} \label{eq:exchange-relation}
X_k(t) X_k(t')=\prod_{\stackrel{\mbox{\tiny arrows}}{i\to k}}
X_i(t) + \prod_{\stackrel{\mbox{\tiny arrows}}{k\to j}} X_j(t) \ko
\end{equation}
where the products are taken over the arrows of $Q(t)$ with source, respectively,
target $k$. By a fundamental theorem
of Fomin-Zelevinsky \cite{FominZelevinsky02}, all cluster
variables are Laurent polynomials with integer coefficients
in the initial cluster variables $x_1$, \ldots, $x_n$.

The {\em $g$-vectors} $g_i^{t_0}(t)$, $1\leq i\leq n$, are
vectors in $\Z^n$ associated with the vertices $t$ of $\mathbb{T}_n$.
In the language of Fock-Goncharov \cite{FockGoncharov09}, they can be interpreted as
distinguished points of the tropical $\mathcal{X}$-variety associated with $Q$.
To define them, we first define, for all vertices $t_1$ and $t_2$
of $\mathbb{T}_n$, a piecewise linear bijection
\[
G(t_2, t_1): \Z^n \to \Z^n
\]
by induction on the distance between $t_1$ and $t_2$ in the tree $\mathbb{T}_n$
as follows: 1) We put $G(t_1, t_1)$ equal to the identity. 2) If
$t_1$ and $t_1'$ are linked by an edge labeled $k$, we put
\[
G(t_1', t_1)(v) = \left\{
\begin{array}{ll} \phi_+(v) & \mbox{if } v=\sum x_i e_i \mbox{ with } x_k\geq 0 \ko\\
\phi_-(v) & \mbox{if } v=\sum x_i e_i \mbox{ with } x_k\leq 0 \ko
\end{array} \right.
\]
where $\phi_+$ and $\phi_-$ are the linear automorphisms of $\Z^n$ with
$\phi_+(e_i)=e_i=\phi_-(e_i)$ for $i\neq k$ and
\[
\phi_+(e_k)= -e_k +\sum_{i \to k} e_i \mbox{ and }
\phi_-(e_k)= -e_k +\sum_{k\to j}  e_j \ko
\]
where the sums are taken over the arrows $i\to k$
respectively $k\to j$ of the quiver $Q(t_1')$. One checks that
$G(t_1,t_1')$ and $G(t_1', t_1)$ are inverse to each other.
3) If the shortest path linking $t_1$ to $t_2$ is of length
greater than or equal to two, we define
\[
G(t_2, t_1) = G(t_2, t_1') \circ G(t_1', t_1),
\]
where $t_1'$ is the first vertex different from $t_1$
on the shortest path from $t_1$ to $t_2$.
Now, for all vertices $t$ and all $1\leq i \leq n$,  we define
\[
g_i^{t_0}(t) = G(t,t_0)(e_i).
\]
By Theorem~1.7 of \cite{DerksenWeymanZelevinsky10},
this definition agrees with the one given in section~6 of
\cite{FominZelevinsky07}. Notice that, like all the other data
defined before, the $g$-vectors also depend on the initial
vertex $t_0$. For later reference, notice that
the $g$-vectors are characterized by
\begin{equation} \label{eq:g-vector-recursion1}
g^{t_0}_i(t_0)=e_i \ko 1 \leq i\leq n \ko
\end{equation}
and, whenever $t_0$ and $t_1$ are linked by an edge labeled $k$,
we have
\begin{equation} \label{eq:g-vector-recursion2}
g_i^{t_1}(t)= \left\{\begin{array}{ll} \phi_+(v) & \mbox{if } x_k\geq 0 \ko \\
                             \phi_-(v) & \mbox{if } x_k\leq 0 \ko
                             \end{array} \right.
\end{equation}
where $v$ is short for $g_i^{t_0}(t)$, the integer $x_k$ is the coefficient of
$e_k$ in $v$ and the maps $\phi_+$ and $\phi_-$ are the linear automorphisms of
$\Z^n$ associated as above with the vertex $k$ and the quiver $Q(t_0)$.

By Corollary~6.3 of \cite{FominZelevinsky07}, the cluster variables
can be expressed in terms of the quiver pattern, the $g$-vectors
and the $F$-polynomials
as follows: For each vertex $t$ of $\mathbb{T}_n$ and each integer
$1\leq i\leq n$, we have
\[
X_i(t) =  F_i(t)(\hat{y}_1,\ldots, \hat{y}_n) \prod_{j=1}^n x_j^{g_j} \ko
\]
where the $g_j$ are the components of $g_i(t)$ and, if $(b_{ij})$ is the
skew-symmetric integer matrix corresponding to $Q(t)$, the elements $\hat{y}_j$
are given by
\[
\hat{y}_j = \prod_{i=1}^n x_i^{b_{ij}}.
\]

\section{Calabi-Yau triangulated categories}
\label{s:Calabi-Yau-triangulated-categories}

\subsection{Krull-Schmidt categories} \label{ss:KrullSchmidtCategories}
We briefly recall basic notions from the representation theory of finite-dimensional
associative algebras. More details can be found
in the books \cite{AssemSimsonSkowronski06} \cite{AuslanderReitenSmaloe95} \cite{GabrielRoiter92} \cite{Ringel84}.
An introduction with motivating examples from cluster theory is given in sections~5 and 6
of \cite{Keller10b}. By a {\em module}, we will mean a right module.

Recall that an {\em additive category} is a category where 1) each finite family of objects
admits a direct sum and 2) the morphism sets are endowed with structures of abelian
groups such that the composition is bilinear. For example, the category of free
modules over a ring is additive, and so are the category of projective modules
and that of all modules.
An additive category {\em has split idempotents}
if each idempotent endomorphism $e$ of an object $X$  gives rise to a direct sum decomposition
$Y\oplus Z\iso X$ such that $Y$ is a kernel for $e$. The category of free modules
over a ring usually does not have split idempotents but the category of projective
modules does. An object $X$ in an additive category is {\em indecomposable}
if in each direct sum decomposition $X\iso Y\oplus Z$, the object $Y$ or
the object $Z$ is a zero object. In the category of modules
over a ring, the simple modules are indecomposable but usually there are
many other indecomposable objects. For example, let $k$ be a field, let $n\geq 1$
be an integer and let $T_n(k)$ be the associative algebra of  {\em lower triangular
$n\times n$--matrices over $k$}. Let $e_{ij}$ denote the matrix
whose $(i,j)$-coefficient equals $1$ and whose other coefficients vanish.
Let $A=T_2(k)$. Then
the modules $P_1 = e_{11}A = [k\ 0]$ and $P_2=e_{22}A = [k\ k]$ are
indecomposable (and projective). The module $P_1$ is also simple but
the module $P_2$ is not since it contains $P_1$ as a proper submodule.
A {\em Krull-Schmidt category} is an
additive category where the endomorphism rings of indecomposable objects are
local and each object decomposes into a finite direct sum of indecomposable
objects (which are then unique up to isomorphism and permutation). One
can show that each Krull-Schmidt category has split idempotents.
We {\em write $\mathsf{indec}(\cc)$} for the set of isomorphism classes
of indecomposable objects of a Krull-Schmidt category $\cc$. For
example, if $k$ is a field, the category of finitely generated
modules over the polynomial ring $k[x]$ is not a Krull-Schmidt
category since the free module $k[x]$ is indecomposable but
its endomorphism algebra, which is $k[x]$, is not local. On the
other hand, the category of coherent sheaves over a projective
variety over $k$ is a Krull-Schmidt category. If $A$ is a finite-dimensional
associative algebra over $k$, then the category $\mod(A)$ of $A$-modules whose
underlying $k$-vector spaces are finite-dimensional is a Krull-Schmidt
category. So is its subcategory $\proj(A)$ of finitely generated
projective modules. In
rare cases, one can explicitly enumerate the isomorphism classes
of indecomposable objects of $\mod(A)$ and $\proj(A)$. For example,
if $A$ is the algebra $T_2(k)$, then, up to
isomorphism, the indecomposables of the category $\mod(A)$ are
$P_1=e_{11}A$, $P_2=e_{22}A$ and $S_2=P_2/P_1$. More generally,
the indecomposable modules over the algebra $T_n(k)$ are,
up to isomorphism, the quotients $P_j/P_i$, where
$1\leq i<j\leq n$ and $P_i=e_{ii} T_n(k)$. We refer to the
books  books \cite{AssemSimsonSkowronski06} 
\cite{AuslanderReitenSmaloe95} \cite{GabrielRoiter92} 
\cite{Ringel84} and to section~5 of \cite{Keller10b} for more examples.

Let $\cc$ be a Krull-Schmidt category.
An object $X$ of $\cc$ is {\em basic} if every indecomposable of $\cc$
occurs with multiplicity $\leq 1$ in $X$. For example, the category of finitely
generated projective modules over $T_2(k)$ contains, up to isomorphism,
exactly four basic objects: $0$, $P_1$, $P_2$ and $P_1\oplus P_2$.
If an object $X$ is basic, it is
determined, up to isomorphism, by the full additive {\em subcategory $\add(X)$}
whose objects are the direct factors of finite direct sums of copies of $X$.
The map $X \mapsto \add(X)$
yields a bijection between the isomorphism classes of basic objects and
the full additive subcategories of $\cc$ which are stable under
taking direct factors and only contain finitely many indecomposables
up to ismorphism.

From now on, let $k$ be an algebraically closed field. A {\em $k$-category} is
a category whose morphism sets are endowed with structures of
$k$-vector spaces such that the composition maps are bilinear. For example,
each full subcategory of the category of modules over a $k$-algebra
is naturally a $k$-category. A $k$-category is {\em $\Hom$-finite} if all of its morphism spaces
are finite-dimensional. For example, the category of finitely generated
projective modules over a $k$-algebra $A$ is $\Hom$-finite if and
only if the algebra $A$ is finite-dimensional over $k$.
A {\em $k$-linear category} is a $k$-category
which is additive. For example, a subcategory of the category of
modules over a $k$-algebra is $k$-linear if and only if it
is stable under forming finite direct sums (in particular
it then contains the zero module, which is the sum over the empty family of modules).
Let $\cc$ be a $k$-linear $\Hom$-finite
category with split idempotents. One can show that $\cc$ is then a Krull-Schmidt category.
Let $\ct$ be an additive subcategory of $\cc$ stable under taking direct
factors. The {\em quiver $Q=Q(\ct)$ of $\ct$}
is defined as follows: The vertices of $Q$ are the isomorphism classes of indecomposable
objects of $\ct$ and the number of arrows from the isoclass of
$T_1$ to that of $T_2$ equals the dimension of the
{\em space of irreducible morphisms}
\[
\mathsf{irr}(T_1, T_2) = \mathsf{rad}(T_1, T_2)/\mathsf{rad}^2(T_1,T_2) \ko
\]
where $\mathsf{rad}$ denotes the {\em radical of $\ct$}, \ie the ideal such
that $\mathsf{rad}(T_1,T_2)$ is formed by all non isomorphisms from $T_1$ to $T_2$.
We refer to the next section for examples.
The {\em quiver of a finite-dimensional algebra $A$} is the quiver of the category
of finitely generated projective $A$-modules. By lemma~\ref{lemma:functor-to-modules}
below, the computation of the quiver of a category can often be reduced to the computation
of the quiver of a finite-dimensional algebra $A$.

\subsection{The quiver of a finite-dimensional algebra, path algebras, representations}
\label{ss:path-algebras}
In the case of the category of finitely generated projective modules
over a finite-dimensio\-nal algebra $A$, one can describe
the radical and its square more explicitly: A morphism $f: P \to Q$ between finitely
generated projective modules lies in the radical, respectively its
square, iff it factors as $f=\pi g \,\iota$, where the morphism $\iota: P \to A^p$
is the inclusion of a direct summand of a free module, the morphism
$\pi: A^q\to Q$ is the projection onto a direct summand of a free module
and the morphism $g:A^p \to A^q$ has all its matrix coefficients in the Jacobson
radical of the algebra $A$, respectively its square. For example, let $A$ be
the algebra $T_n(k)$ (section~\ref{ss:KrullSchmidtCategories}). 
Then the Jacobson radical of $A$ is formed by the strictly lower triangular 
matrices. For $1\leq i\leq n$, put $P_i=e_{ii}A$.
Then, if $i+1\leq j\leq n$, all morphisms $P_i \to P_j$ lie in the radical.
On the other hand, if $j=i+1$, then the space of irreducible morphisms from $P_i$ to $P_j$
is one-dimensional. Thus, the quiver of the algebra $T_n(k)$ is the chain
\[
\xymatrix{[P_1] \ar[r] & [P_2] \ar[r] & \ldots \ar[r] & [P_n] }\ko
\]
where $[P_i]$ denotes the isomorphism class of $P_i$ (in the sequel,
we will usually omit the brackets). One can
generalize this example as follows: Let $Q$ be any finite quiver.
A {\em path of length $l$ in $Q$} is a formal
composition $p=(z|\alpha_l| \ldots|\alpha_2 |\alpha_1|x)$ of $l\geq 0$
arrows forming a diagram
\[
\xymatrix{ x \ar[r]^{\alpha_1} & y \ar[r]^-{\alpha_2} \ar[r] & \ldots \ar[r]^{\alpha_l} & z}.
\]
The vertex $x$ is the {\em source} and the vertex
$z$ the {\em target} of the path $p$.
For each vertex $x$ of $Q$ we have the {\em lazy path}
$e_x=(x|x)$ of length $0$. Two paths $p$ and $q$ are {\em composable}
if the target of $q$ equals the source of $p$; in this case, their
{\em composition} is obtained by concatenating $p$ with $q$.
The {\em path algebra $kQ$} is the vector
space whose basis is formed by all paths and where the product
of two paths is their composition if they are composable and
zero otherwise. Notice that $kQ$ is finite-dimensional if
and only if $Q$ does not have oriented cycles. 
For example, if $Q$ is the quiver
\[
\xymatrix{ 1 \ar[r] & 2 \ar[r] & \ldots \ar[r] & n} \ko
\]
then the path algebra $kQ$ is isomorphic to the algebra $T_n(k)$.
For an arbitrary quiver $Q$ without oriented cycles, the Jacobson
radical of $kQ$ is the two-sided ideal generated by all arrows.
Using this it is not hard to show that the quiver of the category
$\proj kQ$ is isomorphic to $Q$, where the isomorphism sends a
vertex $x$ of $Q$ to the isomorphism class of the module $e_x kQ$.

For later use, let us record the classical equivalence
(\cf \cite{AssemSimsonSkowronski06} \cite{AuslanderReitenSmaloe95} \cite{GabrielRoiter92} \cite{Ringel84})
between the category of modules over the path algebra $kQ$ and the
category of representations of the opposite quiver $Q^{op}$:
It sends a module $M$ over the path algebra to the representation
$V$ of $Q^{op}$ whose value at the vertex $i$ is $V_i=M e_i$ and
which maps an arrow $\alpha: i \to j$ to the linear
map $V_\alpha: V_j \to V_i$ given by the right multiplication with $\alpha$.
For example, the modules $P_i$, $1 \leq i\leq 3$, over the
path algebra of $Q: 1\to 2 \to 3$ correspond
to the representations
\[
\xymatrix{k & 0 \ar[l] & 0 \ar[l]} \ko
\xymatrix{k & k \ar[l]_-{\id} & 0 \ar[l]} \ko
\xymatrix{k & k \ar[l]_-{\id} & k \ar[l]_-{\id}}.
\]

\subsection{From objects to modules}
Let us fix $\cc$, a $k$-linear $\Hom$-finite category with split idempotents.
Let $T$ be a basic object of $\cc$ and $B$ its endomorphism algebra. Let
$\mod(B)$ denote the category of $k$-finite-dimensional right $B$-modules.
The following lemma is well-known and easy to prove. We denote by
$D=\Hom_k(?,k)$ the duality over the ground field.

\begin{lemma} \label{lemma:functor-to-modules}
The functor
\[
\cc(T,?): \cc \to \mod(B)
\]
induces an equivalence from $\add(T)$ to the full subcategory $\proj B$
of finitely generated projective $B$-modules. Moreover, for each object
$U$ of $\add(T)$ and each object $X$ of $\cc$, the canonical map
\[
\cc(U,X) \to \Hom_B(\cc(T,U),\cc(T,X))
\]
is bijective. Dually, the functor
\[
D\cc(?,T): \cc \to \mod(B)
\]
induces an equivalence from $\add(T)$ to the full subcategory $\inj B$
of finitely generated injective $B$-modules. Moreover, for each object
$U$ of $\add(T)$ and each object $X$ of $\cc$, the canonical map
\[
\cc(X,U) \to \Hom_B(D\cc(X,T), D\cc(U,T))
\]
is bijective.
\end{lemma}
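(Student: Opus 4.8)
The plan is to recognize this as a standard ``projectivization'' lemma of Yoneda type and to prove the first half carefully, the second half being obtained by dualizing. Write $F=\cc(T,?)$, viewed as a functor to right $B$-modules via precomposition with $B=\End_\cc(T)$; it is additive, and since $\cc(T,T)$ is precisely the regular module $B_B$ and $F$ preserves finite direct sums and summands, $F$ maps $\add(T)$ into $\add(B_B)=\proj B$. To upgrade this to an equivalence I would first establish the ``Moreover'' bijectivity, which simultaneously gives full faithfulness. The canonical map $\cc(U,X)\to\Hom_B(\cc(T,U),\cc(T,X))$, $f\mapsto F(f)$, is treated first for $U=T$: here $\cc(T,U)=B_B$, and evaluation at $1_T$ furnishes a natural isomorphism $\Hom_B(B_B,\cc(T,X))\iso\cc(T,X)$ whose composite with the canonical map is the identity of $\cc(T,X)$, so the map is bijective for every object $X$. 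For general $U\in\add(T)$, both sides are additive functors of $U$ and the canonical map is natural; writing $U$ as the summand of some $T^n$ cut out by an idempotent $e\in\End_\cc(T^n)$, the additive functor $F$ cuts out $\cc(T,U)$ inside $\cc(T,T^n)=B^n$ by the same $e$, so bijectivity for $T^n$ (which follows from the $U=T$ case by additivity) transports to $U$. This proves the ``Moreover'' assertion, and specializing $X$ to an object of $\add(T)$ shows $F$ is fully faithful on $\add(T)$.

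For essential surjectivity onto $\proj B$, every finitely generated projective right $B$-module is a direct summand of a free module $B^n=F(T^n)$. Since $F$ is fully faithful on $\add(T)$, any idempotent of $\End_B(B^n)=\End_\cc(T^n)$ arises from an idempotent of $\cc$, which splits because $\cc$ is Krull-Schmidt; applying $F$ shows the corresponding summand of $B^n$ lies in the image. Hence $F$ restricts to an equivalence $\add(T)\iso\proj B$, completing the first half.

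For the second half I would note that $\cc(?,T)$ is contravariant and lands in left $B$-modules, sending $T$ to ${}_BB$; composing with the $k$-duality $D$, which interchanges left and right $B$-modules and, being exact, sends ${}_BB$ to the injective right module $DB$, yields the covariant functor $D\cc(?,T)$ with $D\cc(T,T)=DB$. Over the finite-dimensional algebra $B$ one has $\add(DB)=\inj B$, so $D\cc(?,T)$ carries $\add(T)$ into $\inj B$. The argument then runs exactly parallel: for $U=T$ the natural map $\cc(X,T)\to\Hom_B(D\cc(X,T),DB)$ is bijective via the chain $\Hom_B(DM,DB)\cong\Hom_B(D(DB),D(DM))=\Hom_B({}_BB,M)\cong M$ furnished by the duality $D$ (using $D^2\cong\id$) and evaluation at $1_T$, where $M=\cc(X,T)$; one extends to $U\in\add(T)$ by the same additivity and idempotent-splitting argument, deduces full faithfulness, and obtains essential surjectivity onto $\inj B$ since every finitely generated injective is a summand of some $(DB)^n$.

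The main obstacle is not conceptual: this is a Yoneda-type statement and no deep input is needed beyond the Krull-Schmidt hypothesis. The only points demanding care are the bookkeeping of the left versus right $B$-module structures (and the fact that $D$ interchanges them, carrying projectives to injectives), and the passage from $U=T$ to an arbitrary $U\in\add(T)$, where one must combine additivity of both functors with the splitting of idempotents guaranteed by the Krull-Schmidt property of $\cc$.
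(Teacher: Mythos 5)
Your proof is correct. The paper itself offers no proof of this lemma (it is stated as ``well-known and easy to prove''), and your argument --- the Yoneda-type reduction to $U=T$ via evaluation at $1_T$, extension to arbitrary $U\in\add(T)$ by additivity and idempotent splitting, and the dual argument for $D\cc(?,T)$ with the requisite care about left versus right module structures --- is exactly the standard argument being alluded to.
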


In particular, the lemma shows that the isomorphism classes of the
indecomposable projective $B$-modules are represented by the $\cc(T,T_i)$, $1\leq i\leq n$,
where the $T_i$ are the indecomposable pairwise non isomorphic direct factors
of $T$. Thus, the quiver of the endomorphism
algebra $\End_\cc(T)$ is canonically isomorphic to that of the category
$\add(T)$: The isomorphism sends the indecomposable factor $T_i$ to the
indecomposable projective $B$-module $\cc(T,T_i)$, $1\leq i\leq n$. We
sometimes refer to the quiver of $\End_\cc(T)$ as the {\em endoquiver of $T$}.

\subsection{$2$-Calabi-Yau triangulated categories} \label{ss:CalabiYauCategories}
Let $k$ be an algebraically closed field. Let $\cc$ be a $k$-linear
triangulated category with suspension functor $\Sigma$,
\cf \cite{Verdier96}. We refer
to sections~\ref{sss:cluster-categories} and \ref{sss:cluster-category-A3} below for examples. We assume
that
\begin{itemize}
\item[(C1)] $\cc$ is {\em $\Hom$-finite} and has split idempotents.
\end{itemize}
Thus, the category $\cc$  is a Krull-Schmidt category. For objects
$X$, $Y$ of $\cc$ and an integer $i$, we define
\[
\Ext^i(X,Y)=\cc(X,\Sigma^i Y).
\]
An object $X$ of $\cc$ is {\em rigid} if $\Ext^1(X,X)=0$.

Let $d$ be an integer. The category $\cc$ is {\em
$d$-Calabi-Yau} if there exist bifunctorial isomorphisms
\[
D\cc(X,Y) \iso \cc(Y,\Sigma^d X) \ko X,Y \in \cc \ko
\]
where $D=\Hom_k(?,k)$ is the duality over the ground
field (this definition suffices for our purposes; a refined definition
is given in \cite{VandenBergh08}, \cf~also \cite{Keller08d}).
For example, the derived category of the category of coherent
sheaves on a smooth projective variety of dimension $d$ over
an algebraically closed field is $d$-Calabi-Yau if the canonical
bundle is trivial.  Let us assume that
\begin{itemize}
\item[(C2)] $\cc$ is $2$-Calabi-Yau.
\end{itemize}
In the sequel, by a {\em $2$-Calabi-Yau category}, we will mean
a $k$-linear triangulated category satisfying (C1) and (C2).

A {\em cluster tilting object} is a basic object $T$ of $\cc$ such that
$T$ is rigid and each object $X$ satisfying
$\Ext^1(T,X)=0$ belongs to $\add(T)$. If $T$ is a cluster tilting
object, we write $Q_T$ for its endoquiver.

\subsection{$2$-Calabi-Yau realizations} \label{ss:2-CY-realizations}
Let $k$ be an algebraically closed field and
$Q$ a finite quiver. A {\em $2$-Calabi-Yau realization of $Q$}
is a $2$-Calabi-Yau category $\cc$
which admits a cluster tilting object $T$ whose endoquiver
$Q_T$ is isomorphic to $Q$. We will see below that in this case,
under suitable additional assumptions, the cluster combinatorics
associated with $Q$ have a categorical lift in $\cc$. It is
therefore important to construct $2$-Calabi-Yau realizations.

\subsubsection{Cluster categories from quivers} \label{sss:cluster-categories}
Assume that $Q$ does not have oriented cycles. In this case,
the {\em cluster category $\cc_Q$} provides a $2$-Calabi-Yau
realization of $Q$. Let us recall its construction
(\cf section~\ref{sss:cluster-category-A3} below for an example): Let $A=kQ$
denote the path algebra of $Q$ (cf. section~\ref{ss:KrullSchmidtCategories})
and $\mod(A)$ the category
of $k$-finite-dimensional $A$-modules. Let $\cd^b(A)$
denote the bounded derived category of $\mod(A)$, \cf \cite{Verdier96} \cite{Happel88}
\cite{Keller07a}. Thus, the
objects of $\cd^b(A)$ are the bounded complexes
\[
\xymatrix{ \ldots \ar[r] & M^p \ar[r] & M^{p+1} \ar[r] & \ldots }
\]
of $k$-finite-dimensional
$A$-modules and its morphisms are obtained from the morphisms
of complexes by formally inverting all quasi-isomorphisms; the
triangles are `induced' by short exact sequences of complexes.
The category $\cd^b(A)$ is
a $\Hom$-finite triangulated Krull-Schmidt category which admits a
{\em Serre functor}, \ie an auto-equivalence $S:\cd^b(A)\to \cd^b(A)$
such that there are bifunctorial isomorphisms
\[
D\Hom(X,Y) \iso \Hom(Y, SX) \ko X,Y\in \cd^b(A) \ko
\]
where $D=\Hom_k(?,k)$ is the duality over the ground field,
\cf \cite{Happel88}.
In fact, the Serre functor $S$ is given by the derived
tensor product with the bimodule $DA$ which is $k$-dual
to the bimodule $A$. The suspension functor $\Sigma$ of
$\cd^b(A)$ is induced by the functor shifting a complex
one degree to the left, \ie $(\Sigma M)^{p} = M^{p+1}$,
and changing the sign of its differential. The {\em Auslander-Reiten
translation} is the auto-equivalence $\tau$ of $\cd^b(A)$ which is defined by
\[
\tau \Sigma = S.
\]
Notice that $\Sigma \tau = \tau \Sigma$ since $\tau$ is a triangle functor.
If we send a module $M$ to the complex
\[
\xymatrix{ \ldots \ar[r] & 0 \ar[r] & M \ar[r] & 0 \ar[r] & \ldots}
\]
concentrated in degree $0$, we obtain a fully faithful
embedding $\mod(A) \to \cd^b(A)$. In this way, we identify
modules with complexes. It follows from the fact that
the algebra $A$ is of global dimension at most one
that each object of $D^b(A)$ is
isomorphic to a direct sum of objects of the form
$\Sigma^p M$, where $p\in\Z$ and $M$ is a module. In
particular, the indecomposable objects of $\cd^b(A)$
are isomorphic to shifted indecomposable modules.

The cluster category was introduced in \cite{BuanMarshReinekeReitenTodorov06}
and, independently in the case of quivers whose underlying
graph is a Dynkin diagram of type $A$, in \cite{CalderoChapotonSchiffler06}.
It is the orbit category
\[
\cc_{A} = \cd^b(A)/(S^{-1} \Sigma^2)^{\Z} = \cd^b(A)/(\tau^{-1} \Sigma)^{\Z}.
\]
Thus, its objects are those of $\cd^b(A)$ and its morphisms
are defined by
\[
\Hom_{\cc_{A}}(X,Y)=\bigoplus_{p\in \Z} \Hom_{\cd^b(A)}(X, (\Sigma^2 S^{-1})^p Y) \ko
X,Y\in\cc_{A}.
\]
As shown in \cite{Keller05}, the category $\cc_A$ is canonically
triangulated and $2$-Calabi-Yau. Moreover, as shown in \cite{BuanMarshReinekeReitenTodorov06},
it satisfies (C1) and (C2) and
the image in $\cc_{A}$ of the free $A$-module of rank one is a
cluster tilting object $T$ whose endoquiver is
canonically isomorphic to $Q$.

\subsubsection{The example $A_3$} \label{sss:cluster-category-A3}
As an example, let us consider the following quiver $Q$
\[
\xymatrix{1 \ar[r] & 2 \ar[r] & 3}\ko
\]
whose underlying graph is a Dynkin diagram of type $A_3$. As recalled
above, the indecomposable objects of $\cd^b(A)$ are isomorphic
to shifted indecomposable modules. Hence, in our example, using
the notations of section~\ref{ss:KrullSchmidtCategories}, the vertices of the
quiver of the category $\cd^b(A)$ correspond to the complexes $\Sigma^p (P_i/P_j)$,
where $p\in \Z$ and $1 \leq i<j\leq 3$. Its arrows have been
computed by Happel \cite{Happel87} \cite{Happel88}. It turns out that the
quiver of $\cd^b(A)$ is isomorphic to the {\em repetition}
of $Q$, \cf \cite{Happel87} \cite{Keller07a}. In our example, this is the infinite band:
\[
\begin{xy} 0;<0.7pt,0pt>:<0pt,-0.7pt>::
(-35,35) *+{\ldots},
(0,69) *+{\circ} ="0",
(0,0) *+{\circ} ="1",
(35,35) *+{\circ} ="2",
(69,69) *+{\circ} ="3",
(69,0) *+{\circ} ="4",
(103,35) *+{\circ} ="5",
(138,69) *+{\circ} ="6",
(138,0) *+{\circ} ="7",
(172,35) *+{\circ} ="8",
(207,69) *+{\circ} ="9",
(207,0) *+{\circ} ="10",
(241,35) *+{\circ} ="11",
(275,69) *+{\circ} ="12",
(275,0) *+{\circ} ="13",
(310,35) *+{\ldots} ="14",
"0", {\ar"2"},
"1", {\ar"2"},
"2", {\ar"3"},
"2", {\ar"4"},
"3", {\ar"5"},
"4", {\ar"5"},
"5", {\ar"6"},
"5", {\ar"7"},
"6", {\ar"8"},
"7", {\ar"8"},
"8", {\ar"9"},
"8", {\ar"10"},
"9", {\ar"11"},
"10", {\ar"11"},
"11", {\ar"12"},
"11", {\ar"13"},
\end{xy}
\]
The correspondence between its vertices and the
indecomposable objects of $\cd^b(A)$ is indicated by
the following diagram:
\begin{equation} \label{eq:der-cat-A3}
\begin{xy} 0;<1.05pt,0pt>:<0pt,-0.7pt>::
(0,69) *+{\tau P_1} ="0",
(0,0) *+{\Sigma^{-1}(P_2/P_1)} ="1",
(35,35) *+{\tau P_2} ="2",
(69,69) *+{P_1} ="3",
(69,0) *+{\tau P_3} ="4",
(103,35) *+{P_2} ="5",
(138,69) *+{P_2/P_1} ="6",
(138,0) *+{P_3} ="7",
(172,35) *+{P_3/P_1} ="8",
(207,69) *+{P_3/P_2} ="9",
(207,0) *+{\Sigma P_1} ="10",
(241,35) *+{\Sigma P_2} ="11",
(275,69) *+{\Sigma P_3} ="12",
(275,0) *+{\Sigma (P_2/P_1)} ="13",
(310,35) *+{\Sigma (P_3/P_1).} ="14",
"0", {\ar"2"},
"1", {\ar"2"},
"2", {\ar"3"},
"2", {\ar"4"},
"3", {\ar"5"},
"4", {\ar"5"},
"5", {\ar"6"},
"5", {\ar"7"},
"6", {\ar"8"},
"7", {\ar"8"},
"8", {\ar"9"},
"8", {\ar"10"},
"9", {\ar"11"},
"10", {\ar"11"},
"11", {\ar"12"},
"11", {\ar"13"},
"12", {\ar"14"},
"13", {\ar"14"},
\end{xy}
\end{equation}
The objects $\tau P_i$ are the following complexes:
$\tau P_3 = \Sigma^{-1}(P_3/P_2)$,
$\tau P_2 = \Sigma^{-1}(P_3/P_1)$ and
$\tau P_1=\Sigma^{-1}P_3$.
The fully faithful embedding $\mod(A) \to \cd^b(A)$ identifies
the quiver of $\mod(A)$ with the triangle formed by the
objects $P_i/P_j$. Notice that the `meshes' in this triangle
correspond to short exact sequences and thus yield triangles
in the derived category. More generally, each mesh of the whole diagram
comes from a triangle in the
derived category (in fact, it comes from a so-called
Auslander-Reiten triangle, \cf \cite{Happel87}).
The suspension functor $\Sigma$ induces the glide reflection
obtained by reflecting at the horizontal symmetry axis and
translating by two units to the right. The Auslander-Reiten
translation $\tau$ induces the shift by one unit to the left.
The effect of the Serre functor $S=\tau \Sigma$ is the glide
reflection whose translation is the shift by one unit
to the right. The auto-equivalence
\[
S^{-1} \Sigma^2 = \tau^{-1} \Sigma \ko
\]
which appears in the definition of the cluster category,
is the glide reflection taking the $\tau P_i$ to the
$\Sigma P_i$. As shown in \cite{BuanMarshReinekeReitenTodorov06}, we
obtain the quiver of the cluster category $\cc_A$ as the quotient of the
quiver of $\cd^b(A)$ by the action of the automorphism induced by
$S^{-1} \Sigma^2 = \tau^{-1} \Sigma$. Thus, in our example, this
quiver is obtained by cutting out the fattened triangle bordered by the
$\tau P_i$ and the $\Sigma P_i$ and then identifying each vertex
$\tau P_i$ with the corresponding $\Sigma P_i$. The quiver
of $\cc_A$ has nine vertices
so that the cluster category $\cc_A$ has nine indecomposables
(up to isomorphism). The direct sum $T$ of the images of the
$P_i$ in the cluster category is the canonical cluster-tilting
object of $\cc_A$.

\subsubsection{Cluster categories from algebras of global dimension $2$}
Since $k$ is algebraically closed, every finite-dimensional $k$-algebra
of global dimension at most one is Morita-equivalent to $kQ$ for some quiver
$Q$ without oriented cycles. Thus, the cluster category $\cc_A$ is
in fact defined for every finite-dimensional algebra $A$ of global
dimension at most one. If $A$ is an algebra of finite but
arbitrary global dimension, the derived category $\cd^b(A)$
still has a Serre functor (given by the derived tensor
product with $DA$) but the orbit category is no longer
triangulated in general. In recent work \cite{Amiot09}, Amiot has
extended the construction of the (triangulated) cluster category to certain
algebras $A$ of global dimension at most~$2$: Let $A$ be such an
algebra and let $\Pi_3(A)$ be its $3$-Calabi-Yau completion in
the sense of \cite{Keller11b}. By definition, $\Pi_3(A)$
is the tensor algebra
\[
T_A(X) = A \oplus X \oplus (X\ten_A X) \oplus \ldots
\]
of a cofibrant replacement of the complex of bimodules
$X=\Sigma^2\RHom_{A^e}(A,A^e)$, where $A^e=A^{op}\ten_k A$.
Its homology is given by
\[
H^n(\Pi_3(A))= \bigoplus_{p\geq 0} H^n((\Sigma^2 S^{-1})^p A).
\]
In particular, it vanishes in degrees $n\leq 0$.
Then Amiot defines the cluster category $\cc_A$ as the triangle quotient
\[
\per(\Pi_3(A))/\cd_{fd}(\Pi_3(A)) \ko
\]
where, for a dg algebra $B$, the {\em perfect derived category $\per(B)$}
is the thick subcategory of the derived category $\cd(B)$ generated
by the free module $B$ and the {\em finite-dimensional derived category
$\cd_{fd}(B)$} is the full subcategory of $\cd(B)$ whose objects
are the dg modules $M$ whose homology $H^*(M)$ is of finite total dimension.
Amiot shows in \cite{Amiot09} that if $A$ is of global dimension at
most one, this definition agrees with the definition given above.
If $A$ is of global dimension at most $2$, then in general, the
category $\cc_A$ is not $\Hom$-finite. However, we have the following
theorem.

\begin{theorem}[Amiot  \protect{\cite{Amiot09}}] \label{thm:Hom-finite-cluster-category}
Suppose that the functor
\[
\Tor_2^A(?,DA): \mod A \to \mod A
\]
is nilpotent (\ie it vanishes when raised to a sufficiently high power).
\begin{itemize}
\item[a)] The category $\cc_A$ is $\Hom$-finite and $2$-Calabi-Yau.
\item[b)] The image $T$ of $A$ in $\cc_A$ is a cluster tilting
object.
\item[c)] The endomorphism algebra of $T$ in $\cc_A$
is isomorphic to $H^0(\Pi_3(A))$ and the quiver of the endomorphism algebra is obtained
from that of $A$ by adding, for each pair of vertices
$(i,j)$, a number of arrows equal to
\[
\dim \Tor_2^A(S_j, S_i^{op})
\]
from $i$ to $j$, where $S_j$ is the simple right module
associated with $j$ and $S_i^{op}$ the simple left module
associated with $i$.
\end{itemize}
\end{theorem}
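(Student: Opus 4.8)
The plan is to prove this in three stages, working from the definition of $\cc_A$ as the triangle quotient $\per(\Pi_3(A))/\cd_{fd}(\Pi_3(A))$. The hypothesis that $\Tor_2^A(?,DA)$ is nilpotent is exactly what guarantees that the graded algebra $H^*(\Pi_3(A))$ is finite-dimensional in each degree and, more importantly, concentrated in a controlled range: since $A$ has global dimension at most $2$, the bimodule complex $X=\Sigma^2\RHom_{A^e}(A,A^e)$ has homology in degrees $-2$ to $0$, and the formula $H^n(\Pi_3(A))=\bigoplus_{p\geq 0} H^n((\Sigma^2 S^{-1})^p A)$ shows that nilpotence of $\Tor_2^A(?,DA)$ (which computes the action of $\Sigma^2 S^{-1}$ on homology) forces this direct sum to be finite at each $n$.

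\smallskip\noindent
\emph{Part a).} First I would establish that $\cc_A$ is $\Hom$-finite. The key input is a general $t$-structure or truncation argument on $\per(\Pi_3(A))$: one shows that the morphism spaces in the quotient category can be computed as $\cc_A(X,Y)\cong\Hom_{\cd(\Pi_3(A))}(X,Y)$ restricted to an appropriate heart, and that these are finite-dimensional precisely because the nilpotence hypothesis bounds the relevant homology. For the $2$-Calabi-Yau property, the strategy is to use that $\Pi_3(A)$ is by construction the $3$-Calabi-Yau completion of $A$, so $\per(\Pi_3(A))$ carries a $3$-Calabi-Yau structure relative to $\cd_{fd}$; passing to the quotient shifts this to a genuine $2$-Calabi-Yau duality $D\cc_A(X,Y)\iso\cc_A(Y,\Sigma^2 X)$ on the finite-dimensional quotient. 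This is the heart of Amiot's construction and the main technical obstacle, since it requires the relative Calabi-Yau formalism and a careful identification of the Serre functor on the quotient.

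\smallskip\noindent
\emph{Part b).} Next I would show that the image $T$ of the free module $A$ is cluster-tilting. Rigidity, $\Ext^1_{\cc_A}(T,T)=0$, follows from computing $\cc_A(T,\Sigma T)$ via the homology of $\Pi_3(A)$ in degree $1$, which vanishes because $\Pi_3(A)$ is concentrated in nonpositive cohomological degrees (equivalently, $H^1(\Pi_3(A))=0$ as $X$ sits in degrees $\leq 0$). For the condition that any $X$ with $\Ext^1_{\cc_A}(T,X)=0$ lies in $\add(T)$, I would use that $T$ generates $\cc_A$ together with the $2$-Calabi-Yau property: $\Ext^1(T,X)=0$ and $\Ext^1(X,T)\cong D\Ext^1(T,X)=0$ by $2$-Calabi-Yau symmetry, which combined with generation forces $X\in\add(T)$ by a standard approximation argument.

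\smallskip\noindent
\emph{Part c).} Finally, I would compute the endomorphism algebra and its quiver. The isomorphism $\End_{\cc_A}(T)\cong H^0(\Pi_3(A))$ comes from the same morphism-space computation as in part a), specialized to $X=Y=T$: morphisms from $T$ to itself in the quotient are given by $H^0$ of $\Pi_3(A)$ since the higher and lower homology contributions are killed in the Hom between summands of $A$. For the quiver, I would expand $H^0(\Pi_3(A))=\bigoplus_{p\geq 0}H^0((\Sigma^2 S^{-1})^p A)$ and observe that the $p=0$ term recovers $A$ itself (contributing the arrows of the original quiver), while the $p=1$ term contributes $H^0(\Sigma^2 S^{-1}A)=H^2(S^{-1}A)=\Tor_2^A(-,DA)$-type data; reading off the space of new arrows from vertex $i$ to $j$ as the dimension of $\Tor_2^A(S_j,S_i^{op})$ is then a direct translation via the bar resolution computing these $\Tor$ groups. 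The nilpotence hypothesis ensures only finitely many $p$ contribute, so the quiver is finite.
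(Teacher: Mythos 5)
This theorem is not proved in the paper at all: it is imported verbatim from Amiot's article \cite{Amiot09} and used as a black box, so there is no ``paper's own proof'' to compare against. What you have written is a plausible outline of the internals of Amiot's argument, and the overall architecture (quotient $\per(\Pi_3(A))/\cd_{fd}(\Pi_3(A))$, nilpotence of $\Tor_2^A(?,DA)$ bounding $H^*(\Pi_3(A))$ degreewise via $H^n(\Pi_3(A))=\bigoplus_{p\geq 0}H^n((\Sigma^2 S^{-1})^p A)$, the relative $3$-Calabi-Yau structure descending to a $2$-Calabi-Yau duality on the quotient, and the identification of the new arrows with $\Ext^2_A(S_i,S_j)\cong D\Tor_2^A(S_j,S_i^{op})$) is correct.

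However, two steps as you state them would not go through without the one piece of machinery you leave implicit, namely Amiot's fundamental domain $\cF=\cd^{\leq 0}\cap{}^{\perp}\cd^{\leq -2}\cap\per(\Pi_3(A))$ and the computation of morphism spaces of the Verdier quotient on it. First, rigidity of $T$ does \emph{not} follow merely from $H^1(\Pi_3(A))=0$: morphism spaces in a Verdier quotient are colimits and generically grow, so $\Hom_{\per}(T,\Sigma T)=0$ does not by itself give $\cc_A(T,\Sigma T)=0$. The actual statement one needs is Amiot's formula $\Ext^1_{\cc_A}(X,Y)\cong\Ext^1_{\per}(X,Y)\oplus D\Ext^1_{\per}(Y,X)$ for $X,Y$ in the fundamental domain; only then does $H^1(\Pi_3(A))=0$ kill both summands. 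Second, in the maximality half of b), ``generation plus $2$-Calabi-Yau plus a standard approximation argument'' is not enough: one needs that every object of $\cc_A$ lifts to $\cF$ and there admits a triangle $T_1\to T_0\to X\to\Sigma T_1$ with $T_0,T_1\in\add(T)$, and it is this $\add(T)$-presentation (again a consequence of the fundamental-domain analysis) that lets $\Ext^1(T,X)=0$ split the triangle and force $X\in\add(T)$. The same fully-faithfulness of the quotient functor on $\cF$ is what justifies $\End_{\cc_A}(T)\cong H^0(\Pi_3(A))$ in part c). So your plan names the right ingredients but defers essentially all of the technical content of Amiot's proof to the unproved claim that Hom-spaces in the quotient can be ``computed on an appropriate heart''; that claim is the theorem's real substance.
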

As an example, let us consider the tensor product $A$ of two copies
of the path algebra $kQ'$ of the quiver $1 \to 2$ (equivalently, the
tensor product of two copies of the algebra $T_2(k)$ of lower triangular
$2\times 2$-matrices). This algebra is isomorphic to the quotient
of the path algebra of the square
\[
\xymatrix{ (1,2) \ar[r]^\alpha & (2,2) \\
(1,1) \ar[u]^\beta \ar[r]_\delta & (2,1) \ar[u]_\gamma}
\]
by the two-sided ideal generated by the relator $\alpha \beta-\gamma\delta$.
Using this description or lemma~\ref{lemma:quiver-of-tensor-product} below,
one sees that the quiver of $A$ is isomorphic to the tensor product
$Q'\ten Q'$. By a direct computation or using lemma~\ref{lemma:nilpotence}
below, one sees that the algebra $A$ satisfies the assumption of the
theorem. So we obtain that the cluster category $\cc_A$ is $\Hom$-finite
and $2$-Calabi-Yau, that the image $T$ of $A$ in $\cc_A$ is a cluster-tilting
object and that the quiver of its endomorphism algebra is isomorphic to
\[
\xymatrix{ (1,2) \ar[r]^\alpha & (2,2) \ar[dl]^\rho\\
(1,1) \ar[u]^\beta \ar[r]_\delta & (2,1), \ar[u]_\gamma}
\]
that is, to the triangle product $Q'\boxten Q'$.
By proposition~\ref{prop:Calab-Yau-completion-via-quiver-with-potential} below,
the endomorphism algebra itself is isomorphic to the quotient of the path algebra
of this quiver by the two-sided ideal generated by the relators
\[
\alpha\beta - \gamma\delta \ko \beta \rho\ko \rho \alpha \ko \delta\rho\ko \rho \gamma.
\]
We have thus obtained a $2$-Calabi-Yau realization of the triangle
product $Q'\boxten Q'$ for the quiver $Q': 1 \to 2$ and a precise description
of the endomorphism algebra of a cluster-tilting object. In the following
sections, our aim is to generalize this example to triangle products
$Q' \boxten Q''$ of two arbitrary alternating Dynkin quivers.

\subsection{$2$-Calabi-Yau realizations of triangle products}
\label{ss:2-CY-realizations-of-triangle-products}
Let $Q$ and $Q'$ be two finite quivers without oriented cycles.
Let $k$ be a field.
The path algebras $kQ$ and $kQ'$ are then finite-dimensional
algebras of global dimension at most one and the tensor
product $kQ\ten_k kQ'$ is a finite-dimensional algebra
of global dimension at most $2$.

\begin{lemma} \label{lemma:quiver-of-tensor-product}
The quiver of the finite-dimensional $k$-algebra $kQ\ten kQ'$
is isomorphic to $Q\ten Q'$.
\end{lemma}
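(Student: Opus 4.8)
The plan is to compute the ordinary quiver of $A=kQ\ten kQ'$ directly, from a complete set of primitive orthogonal idempotents (for the vertices) and from $\rad A/\rad^2 A$ (for the arrows). Since $Q$ has no oriented cycles, the arrow ideal of $kQ$ is nilpotent and $kQ$ modulo it is $k^{Q_0}$; hence $\rad(kQ)$ is the arrow ideal, and the trivial paths $e_i$, $i\in Q_0$, form a complete set of primitive orthogonal idempotents, and likewise for $kQ'$. In $A$ the elements $e_i\ten e_{i'}$ are then orthogonal idempotents summing to $1$, and the corner algebra $(e_i\ten e_{i'})A(e_i\ten e_{i'})=e_ikQe_i\ten e_{i'}kQ'e_{i'}=k$ (again because there are no oriented cycles), so each $e_i\ten e_{i'}$ is primitive and $A$ is basic with vertex set $Q_0\times Q_0'$, matching the vertices of $Q\ten Q'$.

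The key structural input is the identity
\[
\rad A=\rad(kQ)\ten kQ'+kQ\ten\rad(kQ').
\]
The right-hand side is a two-sided ideal which is nilpotent (both $\rad(kQ)$ and $\rad(kQ')$ are), hence contained in $\rad A$; conversely the quotient of $A$ by it is $(kQ/\rad(kQ))\ten(kQ'/\rad(kQ'))\iso k^{Q_0\times Q_0'}$, which is semisimple, forcing the reverse inclusion. I want to stress that this semisimplicity requires no hypothesis on $k$: the two semisimple quotients are products of copies of $k$, hence separable over $k$, so their tensor product remains semisimple over an arbitrary field. This is the one genuinely structural point; everything after it is bookkeeping with paths, and it is where I expect the only real (if mild) difficulty to lie.

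Writing $S=kQ$, $R=\rad(kQ)$ and $S',R'$ for the primed analogues, I then expand
\[
(\rad A)^2=(R\ten S'+S\ten R')^2=R^2\ten S'+R\ten R'+S\ten R'^2,
\]
using $RS=R=SR$, $S^2=S$ and the primed relations. Grading $kQ$ by path length (so $R=S_{\geq 1}$ and $R^2=S_{\geq 2}$) and $A$ by the total bidegree, one sees that $\rad A$ is spanned by the path basis elements of bidegree $(d,d')$ with $d+d'\geq 1$, while $\rad^2 A$ omits exactly the bidegrees $(1,0)$ and $(0,1)$. Hence a basis of $\rad A/\rad^2 A$ is given by the classes of the $\alpha\ten e_{i'}$ ($\alpha$ an arrow of $Q$, $i'\in Q_0'$) and of the $e_i\ten\alpha'$ ($i\in Q_0$, $\alpha'$ an arrow of $Q'$); the mixed elements $\alpha\ten\alpha'$ already lie in $R\ten R'\subseteq\rad^2 A$. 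Reading off idempotent components, each $\alpha\ten e_{i'}$ with $\alpha\colon i\to j$ contributes a single arrow $(i,i')\to(j,i')$ and each $e_i\ten\alpha'$ with $\alpha'\colon i'\to j'$ a single arrow $(i,i')\to(i,j')$. These are precisely the arrows defining $Q\ten Q'$, which completes the identification.

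As a cross-check, and an alternative route bypassing radicals, one may instead compute $\Ext^1$ between the simple $A$-modules $S_i\ten S_{i'}$: over a field the K\"unneth formula gives
\[
\Ext^1_A(S_i\ten S_{i'},S_j\ten S_{j'})\iso\Ext^1_{kQ}(S_i,S_j)\ten\Hom(S_{i'},S_{j'})\oplus\Hom(S_i,S_j)\ten\Ext^1_{kQ'}(S_{i'},S_{j'}),
\]
and since $\Ext^1$ between simples of a path algebra counts arrows while $\Hom(S_i,S_j)=\delta_{ij}k$, this reproduces the same count. I would nonetheless keep the radical computation as the main argument, as it is elementary and self-contained.
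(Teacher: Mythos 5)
Your argument is correct, but it takes a genuinely different route from the paper's. The paper counts arrows as $\dim\Ext^1$ between the simple modules $S_i\ten S_{i'}$, computed by observing that tensoring a minimal projective presentation of $S_i$ over $kQ$ with one of $S_{i'}$ over $kQ'$ yields a minimal projective presentation of $S_i\ten S_{i'}$ over $A$, whose first term $\bigl(\bigoplus_{\alpha:\,? \to i} P_{s(\alpha)}\ten P_{i'}\bigr)\oplus\bigl(\bigoplus_{\alpha':\,?\to i'} P_i\ten P_{s(\alpha')}\bigr)$ exhibits exactly the arrows of $Q\ten Q'$; this is essentially your K\"unneth cross-check, promoted to the main argument. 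You instead compute $\rad A/\rad^2 A$ directly from the identity $\rad A=\rad(kQ)\ten kQ'+kQ\ten\rad(kQ')$ and the path-length bigrading, which correctly isolates the bidegrees $(1,0)$ and $(0,1)$ and discards the mixed elements $\alpha\ten\alpha'\in\rad^2 A$. Your route is more self-contained: it does not require justifying that the tensor product of minimal presentations is again minimal (a point the paper asserts without comment, and which itself needs a small argument, e.g.\ via the radical identity you prove), and your separability remark makes explicit why no hypothesis on $k$ beyond being a field is needed. What the paper's approach buys in exchange is that it simultaneously identifies the relations of $A$ as the commutativity relations on the squares of $Q\ten Q'$, a fact that is reused later in the proof of Proposition~\ref{prop:Calab-Yau-completion-via-quiver-with-potential}; your radical computation only sees $\rad/\rad^2$ and would have to be supplemented to recover that.
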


\begin{proof} Recall that if $B$ is a finite-dimensional
algebra whose simple modules are one-dimensional, the vertices
of the quiver of $B$ are in bijection with the isomorphism classes
of the (simple) right $B$-modules and the number of arrows
from the vertex of $S$ to that of $S'$ equals the dimension
of the first extension group of $S'$ by $S$. Equivalently,
it equals the multiplicity of the projective cover $P_S$
of $S$ in the first term (not the zeroth term!)
of a minimal projective
presentation of $S'$. Now the simple modules of $kQ\ten kQ'$
are the tensor products $S_i\ten S_{i'}$, where $S_i$
is the simple right $kQ'$-module associated with a vertex
$i$ of $Q$ and similarly for $S_{i'}$. We obtain a minimal
projective presentation of $S_i\ten S_{i'}$ by tensoring
a minimal projective presentation of $S_i$ with a minimal
projective presentation of $S_{i'}$. Now the minimal
projective presentation of $S_i$ is of the form
\[
\xymatrix{ \bigoplus P_{s(\alpha)}  \ar[r] & P_i}
\]
where the sum ranges over all arrows $\alpha$ with target $i$,
the vertex $s(\alpha)$ is the source of the arrow $\alpha$ and
$P_{s(\alpha)}$ the projective cover of the simple $S_{s(\alpha)}$. Thus, the
minimal projective presentation of $S_i\ten S_{i'}$ has
its first term isomorphic to the direct sum of modules
$P_{s(\alpha)} \ten P_{i'}$ and $P_i \ten P_{s(\alpha')}$,
where $\alpha$ runs through the arrows of $Q$ with target $i$
and $\alpha'$ through the arrows of $Q'$ with target $i'$.
Clearly, this implies the assertion.
\end{proof}

\begin{lemma} \label{lemma:nilpotence}
Suppose $Q$ and $Q'$ are Dynkin quivers. Let $A=kQ\ten_k kQ'$.
Then the functor
\[
\Tor_2^A(?,DA): \mod A \to \mod A
\]
is nilpotent.
\end{lemma}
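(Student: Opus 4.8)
The plan is to compute the functor $\Tor_2^A(?,DA)$ explicitly on simple modules and to show that its iterates strictly shift the support of a module along the tensor-product grid until the module is annihilated. Since $A=kQ\ten_k kQ'$ with $Q$, $Q'$ Dynkin, the key observation is that the structure maps of $\Tor_2^A$ are governed by the combinatorics of the triangle product $Q\boxten Q'$, whose arrows from $(j,j')$ to $(i,i')$ encode precisely the extra $\Tor_2$-terms appearing in Theorem~\ref{thm:Hom-finite-cluster-category}~c). First I would recall from the proof of Lemma~\ref{lemma:quiver-of-tensor-product} that a minimal projective presentation of a simple $S_i\ten S_{i'}$ is obtained by tensoring those of $S_i$ and $S_{i'}$; since both path algebras are hereditary, the minimal projective \emph{resolutions} of $S_i$ and $S_{i'}$ are concentrated in degrees $0$ and $1$, and tensoring gives a resolution of $S_i\ten S_{i'}$ concentrated in degrees $0$, $1$, $2$. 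The degree-$2$ term is the source of $\Tor_2^A(?,DA)$.

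Concretely, I would compute $\Tor_2^A(S_i\ten S_{i'}, DA)$ using this length-two resolution. The degree-$2$ term of the resolution of $S_i\ten S_{i'}$ is a direct sum of modules $P_{s(\alpha)}\ten P_{s(\alpha')}$ indexed by pairs of arrows $\alpha:s(\alpha)\to i$ in $Q$ and $\alpha':s(\alpha')\to i'$ in $Q'$. After applying $?\ten_A DA$ and taking homology, one finds (as in Theorem~\ref{thm:Hom-finite-cluster-category}~c) that $\Tor_2^A(S_i\ten S_{i'},DA)$ is a sum of simples $S_j\ten S_{j'}$ where $j$ is a neighbor of $i$ in $Q$ reached by an outgoing arrow and $j'$ a neighbor of $i'$ in $Q'$ reached by an outgoing arrow---that is, the functor moves the support strictly ``forward'' along both $Q$ and $Q'$ simultaneously. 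Because $Q$ and $Q'$ are Dynkin, hence finite acyclic quivers, there is a strict partial order on vertices given by the existence of directed paths, and each application of $\Tor_2^A(?,DA)$ pushes every simple constituent strictly upward in the product order on $Q_0\times Q_0'$.

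The main point is then purely combinatorial: since the poset $Q_0\times Q_0'$ is finite and each iteration of $\Tor_2^A(?,DA)$ sends any simple to a sum of \emph{strictly larger} simples (in both coordinates), after at most $h+h'$ applications---bounded by the lengths of the longest directed paths in $Q$ and $Q'$---no simple survives, so $(\Tor_2^A(?,DA))^N=0$ for $N$ large. To pass from simples to arbitrary finite-dimensional modules, I would note that $\Tor_2^A(?,DA)$ is additive and right exact, so it suffices to bound it on the composition factors: a module of length $\ell$ is killed after at most (bound for simples)$+\ell$ steps, or more cleanly, one observes that the nilpotency on simples forces nilpotency on the whole category since every finite-dimensional module has a finite filtration by simples and the functor respects the forward shift. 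The step I expect to require the most care is the explicit identification of the $\Tor_2$-homology with the claimed ``forward-shifting'' simples---that is, checking that the differentials in the tensor-product resolution induce, after $?\ten_A DA$, exactly the maps predicted by the triangle product and that no backward or diagonal contributions appear. Everything else is a routine finiteness argument on the Dynkin grid.
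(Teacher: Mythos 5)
Your strategy founders on precisely the step you flag as delicate: the identification of $\Tor_2^A(S_i\ten S_{i'},DA)$. By the K\"unneth formula for $\Tor$ over a tensor product of $k$-algebras, and since $kQ$ and $kQ'$ are hereditary, one gets
\[
\Tor_2^A(S_i\ten S_{i'},\, DA)\;\cong\;\Tor_1^{kQ}(S_i,DkQ)\ten_k\Tor_1^{kQ'}(S_{i'},DkQ')\;\cong\;(\tau S_i)\ten_k(\tau S_{i'}) \ko
\]
where $\tau$ is the Auslander--Reiten translation of $\mod kQ$, respectively $\mod kQ'$ (zero on projectives). This is \emph{not} a sum of simples $S_j\ten S_{j'}$ with $j,j'$ strictly beyond $i,i'$: the module $\tau S_i$ is in general indecomposable, non-simple, and its support may contain $i$ itself. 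For the alternating quiver $Q:\;1 \la 2 \ra 3$ one computes $\tau S_2\cong P_2$, of dimension vector $(1,1,1)$, so $S_2$ is a composition factor of $\tau S_2$; hence $S_2\ten S_{2'}$ recurs as a composition factor of $\Tor_2^A(S_2\ten S_{2'},DA)$ whenever $2'$ is likewise a source of valence $\geq 2$. (Theorem~\ref{thm:Hom-finite-cluster-category}~c) computes $\Tor_2$ against a \emph{simple} left module, i.e.\ one multiplicity space; replacing it by all of $DA$ yields the whole module $\tau S_i\ten\tau S_{i'}$.) This breaks both halves of your argument: there is no strict forward shift in the poset $Q_0\times Q_0'$, and the reduction to composition factors cannot terminate, since the relation ``$S'$ is a composition factor of $\Tor_2^A(S,DA)$'' acquires a loop at $S_2\ten S_{2'}$. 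The functor is nevertheless nilpotent there --- $\Tor_2^A(P_2\ten P_{2'},DA)=0$ because $\tau P_2=\Sigma^{-1}I_2$ sits in cohomological degree $1$ --- but this is invisible to any upper bound by composition factors. (Also, the longest directed path in an alternating Dynkin quiver has length $1$, so it cannot control $h+h'$.)

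The genuine ``forward shift'' happens in cohomological degree along the repetition quiver $\Z Q\times \Z Q'$, not in the vertex poset, and that is how the paper argues: by Proposition~4.9 of \cite{Amiot09} it suffices to show $(\Sigma^{-2}S)^p(\cd^b_{\geq 0}(A))\subset\cd^b_{\geq 1}(A)$ for $p\gg 0$; one has $(\Sigma^{-2}S)(L\ten M)\cong(\tau L)\ten(\tau M)$ for $L\in\cd^b(kQ)$ and $M\in\cd^b(kQ')$, and Happel's description of the derived category of a Dynkin quiver guarantees that some power of $\tau$ carries the aisle $\cd^b_{\geq 0}$ into $\cd^b_{\geq 1}$. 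To salvage a module-theoretic version of your argument you would have to track positions in $\Z Q$ (iterate $\tau$ until every indecomposable reaches a projective, after which one further application lands in degree $1$), which is Happel's theorem in disguise rather than a combinatorial induction on $Q_0\times Q_0'$.
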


\begin{proof} Let $S=?\lten_A DA$ be the Serre functor
of $\cd^b(A)$ and let $\cd^b_{\geq 0}(A)$ be the right aisle
of the canonical $t$-structure on $\cd^b(A)$. For $p\in\Z$,
put $\cd^b_{\geq p}(A)=\Sigma^{-p} \cd^b_{\geq 0}(A)$. We
also use analogous notations for $\cd^b(kQ)$ and $\cd^b(kQ')$.
By Proposition~4.9 of \cite{Amiot09}, to show that
$\Tor_2^A(?,DA)$ is nilpotent, it suffices to show
that
\[
(\Sigma^{-2} S)^p(\cd^b_{\geq 0}(A)) \subset \cd^b_{\geq 1}(A)
\]
for sufficiently large integers $p$. Now if $L$
belongs to $\cd^b(kQ)$ and $M$ to $\cd^b(kQ')$, then
we have a canonical isomorphism
\[
(\Sigma^{-2} S)(L\ten M) = (\Sigma^{-1} S L) \ten (\Sigma^{-1} S M) = (\tau L)\ten (\tau M) \ko
\]
where $\tau=\Sigma^{-1} S$ is the Auslander-Reiten translation
of the derived category of $Q$ respectively $Q'$. Since
$Q$ and $Q'$ are Dynkin quivers, it follows from
Happel's description of the derived category of
a Dynkin quiver \cite{Happel87}, that there are
integers $N$ and $N'$ such that
\[
\tau^N (\cd^b_{\geq 0}(kQ)) \subset \cd^b_{\geq 1}(kQ) \quad \mbox{ and }\quad
\tau^{N'} (\cd^b_{\geq 0}(kQ')) \subset \cd^b_{\geq 1}(kQ').
\]
Then we have
\[
(\Sigma^{-2} S)^{N+N'}(L\ten M) = (\tau^{N+N'} L)\ten (\tau^{N+N'} M) \in \cd_{\geq 2}^b(A)
\]
for all $L\in \cd^b_{\geq 0}(kQ)$ and $M\in\cd^b_{\geq 0}(kQ')$.
Since $\cd^b_{\geq 0}(A)$ is the closure under $\Sigma^{-1}$, extensions
and passage to direct factors of such objects $L\ten M$, the claim
follows.
\end{proof}

\begin{corollary} \label{cor:cluster-category-Hom-finite}
Suppose that  $Q$ and $Q'$ are Dynkin quivers.
Let $A=kQ\ten_k kQ'$. Then the cluster category
$\cc_A$ is $\Hom$-finite and $2$-Calabi-Yau.
Moreover, the image $T$ of $A$ in $\cc_A$ is a cluster tilting
object. The endoquiver of $T$ is
canonically isomorphic to the triangle product $Q\boxten Q'$.
\end{corollary}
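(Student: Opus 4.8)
The plan is to assemble Corollary~\ref{cor:cluster-category-Hom-finite} directly from the three preceding results, treating it as a bookkeeping exercise rather than a new piece of hard analysis. First I would invoke Lemma~\ref{lemma:nilpotence}, which asserts precisely that for $A=kQ\ten_k kQ'$ with $Q$ and $Q'$ Dynkin, the functor $\Tor_2^A(?,DA):\mod A\to\mod A$ is nilpotent. This is exactly the hypothesis required to apply Amiot's Theorem~\ref{thm:Hom-finite-cluster-category}. Applying part~(a) of that theorem yields at once that $\cc_A$ is $\Hom$-finite and $2$-Calabi-Yau, and part~(b) yields that the image $T$ of $A$ is a cluster-tilting object. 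So the first two assertions of the corollary are immediate.

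The only assertion requiring genuine work is the identification of the endoquiver $Q_T$ with the triangle product $Q\boxten Q'$. Here I would use part~(c) of Theorem~\ref{thm:Hom-finite-cluster-category}, which describes the quiver of $\End_{\cc_A}(T)$ as the ordinary quiver of $A$ with extra arrows added: for each pair of vertices $(i,j)$ one adds $\dim\Tor_2^A(S_j,S_i\op)$ arrows from $i$ to $j$. By Lemma~\ref{lemma:quiver-of-tensor-product}, the ordinary quiver of $A$ is $Q\ten Q'$, so it remains to compute the additional arrows and check that they match the $rr'$ arrows from $(j,j')$ to $(i,i')$ prescribed in the definition of the triangle product. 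The key step is thus a computation of $\Tor_2^A(S_{(j,j')},S_{(i,i')}\op)$ for the tensor-product algebra.

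The main obstacle, and the heart of the argument, will be this $\Tor_2$ computation. The plan is to exploit the Künneth-type decomposition of projective resolutions over a tensor product: a minimal projective resolution of the simple $S_{(i,i')}=S_i\ten S_{i'}$ over $A$ is the tensor product of the minimal resolutions of $S_i$ over $kQ$ and of $S_{i'}$ over $kQ'$. Since $kQ$ and $kQ'$ have global dimension at most $1$, each factor resolution has length at most $1$, so the degree-$2$ part of the tensor product receives contributions only from the degree-$1$ term of the first factor tensored with the degree-$1$ term of the second. Concretely, the term in homological degree $2$ is a sum of modules $P_{s(\alpha)}\ten P_{s(\alpha')}$ where $\alpha$ ranges over arrows of $Q$ with target $i$ and $\alpha'$ over arrows of $Q'$ with target $i'$ (exactly as in the proof of Lemma~\ref{lemma:quiver-of-tensor-product} for the degree-$1$ term). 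Dualizing and reading off multiplicities, $\dim\Tor_2^A(S_{(j,j')},S_{(i,i')}\op)$ counts precisely the pairs consisting of an arrow $i\to j$ in $Q$ and an arrow $i'\to j'$ in $Q'$; equivalently it equals $rr'$ where there are $r$ arrows from $i$ to $j$ in $Q$ and $r'$ arrows from $i'$ to $j'$ in $Q'$.

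Finally I would verify that these extra arrows are oriented correctly. The added arrows go from $i$ to $j$ in the notation of Theorem~\ref{thm:Hom-finite-cluster-category}(c), that is from the vertex $(i,i')$ to the vertex $(j,j')$ in our product indexing. Comparing with section~\ref{ss:products-of-quivers}, the triangle product $Q\boxten Q'$ is obtained from $Q\ten Q'$ by adding $rr'$ arrows \emph{from $(j,j')$ to $(i,i')$} whenever $Q$ has $r$ arrows $i\to j$ and $Q'$ has $r'$ arrows $i'\to j'$. A small but essential check is that the conventions on source/target and on the opposite module $S_i\op$ make these two prescriptions agree, so that the combined quiver is indeed $Q\boxten Q'$. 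Once this orientation bookkeeping is confirmed, the canonical isomorphism $Q_T\cong Q\boxten Q'$ follows, completing the proof.
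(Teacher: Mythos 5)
Your proposal follows essentially the same route as the paper: invoke Lemma~\ref{lemma:nilpotence} to apply Theorem~\ref{thm:Hom-finite-cluster-category}(a)--(c), then compute $\Tor_2^A$ between the simple $A$-modules via the K\"unneth decomposition of the tensor product of the two length-one minimal projective resolutions, obtaining $\Tor_1^{kQ}\ten\Tor_1^{kQ'}$ and hence the count $rr'$ of added arrows. The paper's proof is exactly this computation (stated as a one-line K\"unneth identity identifying the result with $e_j(kQ_1)e_i\ten e_{j'}(kQ'_1)e_{i'}$), so your argument is correct and matches the original.
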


\begin{proof} Except for the last claim, this follows directly from
Theorem~\ref{thm:Hom-finite-cluster-category}
and Lemma~\ref{lemma:nilpotence}. To determine the
endoquiver, by Theorem~\ref{thm:Hom-finite-cluster-category}
and Lemma~\ref{lemma:quiver-of-tensor-product}, it suffices to
compute the second torsion groups between the simple $A$-modules.
By the proof of Lemma~\ref{lemma:quiver-of-tensor-product},
these are tensor products of simple modules. We compute
\[
\Tor_2^A(S_j \ten_k S_{j'}, S_i^{op}\ten S_{i'}^{op}) =
\Tor_1^{kQ}(S_j, S_{i}^{op})\ten \Tor_1^{kQ'}(S_{j'}, S_{i'}^{op})
= e_j (kQ_1) e_i \ten e_{j'} (kQ'_1) e_{i'} \ko
\]
where we write $kQ_1$ for the vector space generated by
the arrows of $Q$ and consider it as a bimodule over
the semi-simple subalgebra $\prod_{i\in Q_0} ke_i$ of $kQ$.
This shows the claim.
\end{proof}

\subsection{Reminder on quivers with potential}
\label{ss:reminder-quivers-with-potential} We recall results
from Derksen-Weyman-Zelevinsky's fundamental article
\cite{DerksenWeymanZelevinsky08}. Let $Q$ be a finite
quiver and $k$ a field. Let $\hat{kQ}$ be the {\em completed path algebra}, \ie the
completion of the path algebra at the ideal generated by the arrows
of $Q$. Thus, $\hat{kQ}$ is a topological algebra and the paths of
$Q$ form a topologial basis so that the underlying vector space of
$\hat{kQ}$ is
\[
\prod_{p \mbox{ \tiny path}} kp.
\]
The {\em continuous zeroth Hochschild homology} of $\hat{kQ}$ is the vector
space $\hh_0$ obtained as the quotient of $\hat{kQ}$ by the closure of
the subspace generated by all commutators. It admits a topological
basis formed by the {\em cycles} of $Q$, \ie the orbits
of paths $p=(i|\alpha_m| \ldots |\alpha_1| i)$ of any length $m\geq 0$ with
identical source and target under the action of the cyclic group of order $m$.
In particular, the space $\hh_0$ is a product of copies of $k$ indexed
by the vertices if $Q$ does not have oriented cycles.
For each arrow $a$ of $Q$, the {\em cyclic derivative with respect to $a$}
is the unique linear map
\[
\del_a : \hh_0 \to \hat{kQ}
\]
which takes the class of a path $p$ to the sum
\[
\sum_{p=uav} vu
\]
taken over all decompositions of $p$ as a concatenation
of paths $u$, $a$, $v$, where $u$ and $v$ are of length $\geq 0$.
A {\em potential} on $Q$ is an element $W$ of $\hh_0$ whose expansion
in the basis of cycles does not involve cycles of length $\leq 1$.
A potential is {\em reduced} if it does not involve cycles of length
$\leq 2$. The {\em Jacobian algebra} $\cp(Q,W)$ associated to a quiver
$Q$ with potential $W$ is the quotient of the completed path algebra
by the closure of the $2$-sided ideal generated by the cyclic derivatives
of the elements of $W$. If the potential $W$ is reduced and the Jacobian
algebra $\cp(Q,W)$ is finite-dimensional, its quiver is isomorphic to $Q$.
Two quivers with potential $(Q,W)$ and $(Q',W')$
are {\em right equivalent} if $Q_0=Q_0'$ and there exists a $k$-algebra
isomorphism $\phi: \hat{kQ} \to \hat{kQ'}$ such that $\phi$ induces
the identity on the subalgebra $\prod_{Q_0} k$ and takes $W$ to $W'$.
One of the main theorems of \cite{DerksenWeymanZelevinsky08} is the
existence, for each quiver with potential $(Q,W)$, of a reduced
quiver with potential $(Q_{red}, W_{red})$, unique up to right
equivalence, such that $(Q,W)$ is right equivalent to the sum
of $(Q_{red}, W_{red})$ with a trivial quiver with potential
(\cf \cite{DerksenWeymanZelevinsky08} for the definition). In particular,
the Jacobian algebras of $(Q,W)$ and $(Q_{red}, W_{red})$ are
isomorphic. The quiver with potential $(Q_{red}, W_{red})$ is
the {\em reduced part} of $(Q,W)$.

Let $(Q,W)$ be a quiver with potential such that $Q$ does not
have loops. Let $i$ be a vertex of $Q$
not lying on a $2$-cycle. The {\em mutation} $\mu_i(Q,W)$ is
defined as the reduced part of the quiver with potential
$\tilde{\mu}_i(Q,W)=(Q',W')$, which is defined as follows:
\begin{itemize}
\item[a)]
\begin{itemize}
\item[(i)] To obtain $Q'$ from $Q$, add a new arrow $[\alpha\beta]$
for each pair of arrows $\alpha:i\to j$ and $\beta: l \to i$ of $Q$ and
\item[(ii)] replace each arrow $\gamma$ with source or target $i$ by
a new arrow $\gamma^*$ with $s(\gamma^*)=t(\gamma)$ and $t(\gamma^*)=s(\gamma)$.
\end{itemize}
\item[b)] Put $W'=[W]+\Delta$, where
\begin{itemize}
\item[(i)] $[W]$ is obtained from $W$ by
replacing, in a representative of $W$ without cycles passing through
$i$, each occurrence of $\alpha\beta$ by $[\alpha\beta]$, for each pair
of arrows $\alpha:i\to j$ and $\beta: l \to i$ of $Q$;
\item[(ii)] $\Delta$ is the sum of the cycles $[\alpha\beta]\beta^* \alpha^*$
taken over all pairs of arrows $\alpha:i\to j$ and $\beta: l \to i$ of $Q$.
\end{itemize}
\end{itemize}
Then $i$ is not contained in a $2$-cycle of $\mu_i(Q,W)$ and
$\mu_i(\mu_i(Q,W))$ is shown in \cite{DerksenWeymanZelevinsky08}
to be right equivalent to $(Q,W)$.
Note that if neither $Q$ nor the quiver $Q'$ in $(Q',W')=\mu_i(Q,W)$
have loops or $2$-cycles, then $Q$ and $Q'$ are linked
by the quiver mutation rule.

Let $(Q,W)$ be a quiver with potential. The{\em Ginzburg dg algebra}
$\Gamma=\Gamma(Q,W)$ associated with $(Q,W)$ is due to V.~Ginzburg
\cite{Ginzburg06}. We use the variant defined in \cite{KellerYang11}.
The Ginzburg algebra is a topological differential graded
algebra whose homology vanishes in
(cohomological) degrees $>0$ and whose zeroth homology
is isomorphic to the Jacobian algebra $\cp(Q,W)$. We refer
to \cite{KellerYang11} for the definitions of the
derived category $\cd\Gamma$, the perfect derived category
$\per(\Gamma)$ and the finite-dimensional derived category
$\cd_{fd}(\Gamma)$. All three are triangulated categories and
$\cd_{fd}(\Gamma)$ is a thick subcategory of $\per(\Gamma)$,
which is a thick subcategory of $\cd(\Gamma)$. The objects of $\cd(\Gamma)$
are all differential graded right $\Gamma$-modules.
The {\em cluster category} $\cc_{\Gamma}$ is defined as the
triangle quotient of $\per(\Gamma)$ by $\cd_{fd}(\Gamma)$.

Let us assume that the Jacobian algebra of $(Q,W)$ is finite-dimensional.
Then the cluster category $\cc_\Gamma$ is a  ($\Hom$-finite!)
$2$-Calabi-Yau category and the image $T$ in $\cc_\Gamma$ of
the free right $\Gamma$-module of rank one is a cluster tilting
object by \cite{Amiot09}. Moreover, as shown in \cite{Amiot09},
the endomorphism algebra of $T$ in $\cc_\Gamma$ is isomorphic
to the Jacobian algebra and thus, if $(Q,W)$ is reduced,
the quiver of the endomorphism algebra of $T$ is isomorphic
to $Q$.

Let $(Q,W)$ be a quiver with potential and $i$ a vertex
of $Q$ not lying on a $2$-cycle. Let $\Gamma=\Gamma(Q,W)$
and $\Gamma'=\Gamma(\mu_i(Q,W))$. For each vertex $j$ of
$Q$, let $e_j$ be the associated idempotent and $P_j=e_j \Gamma$
the right ideal generated by $e_j$. We use analogous notations
for $\Gamma'$. It is shown in \cite{KellerYang11} that there
is a triangle equivalence
\[
F: \cd(\Gamma') \iso \cd(\Gamma)
\]
which induces triangle equivalences in the perfect and the finite-dimensional
derived categories and sends the objects $P'_j$, $j\neq i$, to the $P_j$
and the object $P_i$ to the cone over the morphism of dg modules
\[
P_i \to \bigoplus P_j \ko
\]
where the sum is taken over all arrows in $Q$ with source $i$ and the
components of the morphism are the left multiplications with the
corresponding arrows. If the Jacobian algebra of $(Q,W)$ is finite-dimensional,
the equivalence $F$ induces a triangle equivalence
\[
\cc_{\Gamma'} \iso \cc_\Gamma
\]
which sends $T'=\Gamma'$ to $\mu_i(T)$, by the exchange triangles
of Lemma~\ref{lemma:exchange-triangles}. In particular, the endomorphism
algebra of $\mu_i(T)$ is isomorphic to the Jacobian algebra of $\mu_i(Q,W)$
(this also follows from Theorem~5.1 in \cite{BuanIyamaReitenSmith11}) and
its quiver is the quiver $Q'$ appearing in $(Q',W')=\mu_i(Q,W)$.

\subsection{Description of $\cc_A$ by a quiver with potential}
\label{ss:description-via-quiver-with-potential}
Let $k$ be a field and $Q$ and $Q'$ finite quivers without
oriented cycles. Let $A=kQ\ten_k kQ'$. The cluster category
$\cc_A$ defined in section~\ref{ss:2-CY-realizations} is
associated with the $3$-Calabi-Yau completion $\Pi_3(A)$.
We can describe this dg algebra conveniently using
quivers with potentials.

For arrows $\alpha: i\to j$ of $Q$ and $\alpha':i'\to j'$
of $Q'$, we use notations as in the following full
subquiver of $Q\boxten Q'$:
\[
\xymatrix{ (i,j') \ar[r]^{(\alpha, j')} & (j,j') \ar[ld]|-{\rho(\alpha,\alpha')} \\
(i,i') \ar[u]^{(i,\alpha')} \ar[r]_{(\alpha,i')} & (j,i') \ar[u]_{(j,\alpha')}
}
\]

\begin{proposition} \label{prop:Calab-Yau-completion-via-quiver-with-potential}
The $3$-Calabi-Yau completion $\Pi_3(A)$
is quasi-isomorphic to the Ginz\-burg algebra $\Gamma(Q\boxten Q', W)$
associated with the quiver $Q\boxten Q'$ and the potential
\[
W= \sum \rho(\alpha,\alpha')\circ((j,\alpha')\circ(\alpha,i')  - (\alpha,j')\circ (i,\alpha')) \ko
\]
where the sum ranges over all pairs of arrows $\alpha$ of $Q$ and $\alpha'$ of $Q'$.
\end{proposition}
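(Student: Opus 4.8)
The plan is to compute the inverse dualizing complex of $A=kQ\ten kQ'$ from a small explicit bimodule resolution and then to recognise $\Pi_3(A)=T_A(X)$ as the Ginzburg algebra. Write $R=kQ_0\ten kQ'_0$ for the semisimple degree-zero subalgebra and recall that $A$ is the path algebra $k(Q\ten Q')$ modulo the commutativity relations
\[
r_{\alpha,\alpha'}=(\alpha,i')\circ(j,\alpha')-(\alpha,j')\circ(i,\alpha')\ko
\]
one for each pair of arrows $\alpha\colon i\to j$ of $Q$ and $\alpha'\colon i'\to j'$ of $Q'$. Tensoring over $k$ the standard length-one bimodule resolutions of the hereditary algebras $kQ$ and $kQ'$ yields a length-two projective bimodule resolution
\[
0\to A\ten_R V_2\ten_R A\to A\ten_R V_1\ten_R A\to A\ten_R A\to A\to 0\ko
\]
in which $V_1=(kQ_1\ten kQ'_0)\oplus(kQ_0\ten kQ'_1)$ is the space of arrows of $Q\ten Q'$ and $V_2=kQ_1\ten kQ'_1$ is indexed by the commutative squares; the left-hand differential sends the generator attached to a square $(\alpha,\alpha')$ to the relation $r_{\alpha,\alpha'}$.

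Next I would apply $\RHom_{A^e}(-,A^e)$ and use the identification $\Hom_{A^e}(A\ten_R V\ten_R A,A^e)\cong A\ten_R V^*\ten_R A$, where $V^*$ denotes the $R$-bimodule dual, to obtain $\RHom_{A^e}(A,A^e)$ as a complex of projective bimodules concentrated in cohomological degrees $0,1,2$ with terms $A\ten_R V_i^*\ten_R A$. After the shift, $X=\Sigma^2\RHom_{A^e}(A,A^e)$ is concentrated in degrees $0,-1,-2$ with
\[
X^{0}=A\ten_R V_2^*\ten_R A\ko X^{-1}=A\ten_R V_1^*\ten_R A\ko X^{-2}\cong A\ten_R A.
\]
Here the dual basis of $V_2^*$ is precisely the set of new arrows $\rho(\alpha,\alpha')\colon(j,j')\to(i,i')$ in degree $0$, the dual basis of $V_1^*$ gives the degree $-1$ reversed arrows $(\alpha,i')^*$ and $(i,\alpha')^*$, and $R^*\cong R$ contributes one degree $-2$ loop $t_v$ per vertex $v$. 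Consequently $\Pi_3(A)=T_A(X)$ is, as a graded algebra, generated over $R$ by the arrows $V_1$ of $A$ together with the $\rho(\alpha,\alpha')$ in degree $0$, the reversed arrows $V_1^*$ in degree $-1$, and the loops in degree $-2$; the differential induced by the dualised resolution sends each loop $t_v$ to the sum of commutators $\sum_a[a,a^*]$ taken over the arrows $a$ of $Q\ten Q'$, and sends $(\alpha,i')^*$, $(i,\alpha')^*$ to $\del_{(\alpha,i')}W$, $\del_{(i,\alpha')}W$, where $W=\sum\rho(\alpha,\alpha')\circ r_{\alpha,\alpha'}$ is the stated potential.

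Comparing with $\Gamma(Q\boxten Q',W)$, the two differential graded algebras carry the same generators and the same differentials in degrees $0$ and $-2$ and on the degree $-1$ reversed arrows $(\alpha,i')^*$, $(i,\alpha')^*$; the only difference is that $\Gamma$ carries, in addition, the degree $-1$ reversals $\rho(\alpha,\alpha')^*$, with $d\rho(\alpha,\alpha')^*=\del_{\rho(\alpha,\alpha')}W=r_{\alpha,\alpha'}$. I would therefore exhibit the projection $\Gamma(Q\boxten Q',W)\to\Pi_3(A)$ that is the identity on all other generators and sends each $\rho(\alpha,\alpha')^*$ to $0$. This is a morphism of dg algebras — compatibility with the loop differential holds because the extra terms $[\rho(\alpha,\alpha'),\rho(\alpha,\alpha')^*]$ in $d\,t_v$ map to $0$, and compatibility on $\rho(\alpha,\alpha')^*$ holds because $r_{\alpha,\alpha'}$ already vanishes in $A$ — and it is a quasi-isomorphism because it merely contracts the acyclic pairs $(\rho(\alpha,\alpha')^*,\,r_{\alpha,\alpha'})$ that resolve the defining relations of $A$ up to homotopy; in particular $H^0(\Pi_3(A))\cong\cp(Q\boxten Q',W)$, in agreement with the endoquiver of Corollary~\ref{cor:cluster-category-Hom-finite}. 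The main obstacle is the differential bookkeeping behind the second paragraph: one must verify, with the correct Koszul signs, that dualising the two resolution differentials reproduces exactly the cyclic-derivative differentials of the Ginzburg algebra, and in particular that the potential receives no cubic or higher corrections. The latter is automatic here because the bimodule resolution has length exactly two with $V_2$ indexing purely quadratic relations — equivalently, because $A$, as a tensor product of the Koszul hereditary algebras $kQ$ and $kQ'$, is itself Koszul — so that $\del_{\rho(\alpha,\alpha')}W=r_{\alpha,\alpha'}$ is quadratic and $W$ is exactly the cubic potential displayed in the statement.
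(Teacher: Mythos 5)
Your route is genuinely different from the paper's: the paper identifies the minimal relations of $A=kQ\ten kQ'$ as the commutativity relations (via the proof of Corollary~\ref{cor:cluster-category-Hom-finite}) and then simply invokes Theorem~6.9 of \cite{Keller09a}, which is exactly the general statement that the $3$-Calabi-Yau completion of a global-dimension-$\leq 2$ algebra presented by a quiver with minimal relations is the Ginzburg algebra of the quiver with one added reverse arrow per relation and potential $\sum_r\rho_r r$. What you do instead is unfold that citation in the present special case: the tensor-product bimodule resolution, the identification of $\Sigma^2\RHom_{A^e}(A,A^e)$ degree by degree, and the matching of generators and differentials are all correct (the diagonal arrows $\rho(\alpha,\alpha')$ do come out in degree $0$ pointing from $(j,j')$ to $(i,i')$, and the length-two resolution with $V_2=kQ_1\ten kQ'_1$ is exactly why no higher-order corrections to $W$ appear). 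The benefit of your version is that it is self-contained and makes the potential visibly forced by the resolution; the cost is that you must re-prove the hard kernel of the cited theorem yourself.

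That kernel is the one place where your argument is asserted rather than proved: the claim that the projection $\Gamma(Q\boxten Q',W)\to\Pi_3(A)$ killing the $\rho(\alpha,\alpha')^*$ is a quasi-isomorphism. Checking it is a dg algebra morphism is easy, as you say, but "contracting the acyclic pairs $(\rho^*,r)$" is not yet an argument: the kernel of the projection is the two-sided dg ideal generated by these pairs, and its acyclicity amounts to the statement that the sub-dg-algebra generated by the arrows of $Q\ten Q'$ and the $\rho^*$ with $d\rho^*=r_{\alpha,\alpha'}$ is a cofibrant resolution of $A$ (using minimality of the relations and $\operatorname{gl.dim}A\leq 2$), combined with the invariance of the Calabi-Yau completion under such resolutions — which is precisely the content of Theorem~6.9 of \cite{Keller09a}. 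So either cite that theorem (and you have reproduced the paper's proof) or supply the filtration/cofibrancy argument. A second, smaller omission: the paper's $\Gamma(Q\boxten Q',W)$ is the \emph{completed} Ginzburg algebra of \cite{KellerYang09}, whereas your construction naturally lands in the non-completed one; you still need the paper's final remark that the canonical morphism from the non-completed to the completed Ginzburg algebra is a quasi-isomorphism because the homology is finite-dimensional in each degree.
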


\begin{proof}A closer examination of the proof of Corollary~\ref{cor:cluster-category-Hom-finite}
shows that the minimal relations for the algebra $A$ are precisely
the commutativity relations $(\alpha,i')\circ (j,\alpha') - (\alpha,j')\circ (i,\alpha')$
associated with pairs of arrows $\alpha$ of $Q$ and $\alpha'$ of $Q'$. Now Theorem~6.9 of
\cite{Keller11b} shows that $\Pi_3(A)$ is quasi-isomorphic to the non completed
Ginzburg algebra $\Gamma'(Q\boxten Q', W)$. Since the homology in each degree
of this dg algebra is finite-dimensional, the canonical morphism
\[
\Gamma'(Q\boxten Q', W) \to \Gamma(Q\boxten Q', W)
\]
is a quasi-isomorphism.
\end{proof}

\section{Cluster combinatorics from Calabi-Yau triangulated categories}
\label{s:cluster-combinatorics-from-triangulated-categories}
Here we describe how to associate cluster combinatorial data with
objects in $2$-Calabi-Yau categories with a cluster tilting object.
We start with the categorical lift of the most basic operation: quiver mutation.

\subsection{Decategorification: quiver mutation} \label{ss:decategorification:quiver-mutation}
Let $k$ be an algebraically closed field and $\cc$ a $2$-Calabi-Yau
category with cluster tilting object $T$. Let $T_1$ be an indecomposable direct
factor of $T$.

\begin{theorem}[Iyama-Yoshino \protect{\cite{IyamaYoshino08}}]
\label{thm:2-CY-mutation} Up to isomorphism, there is a unique
indecomposable object $T_1^*$ not isomorphic to $T_1$ such that the
object $\mu_1(T)$ obtained from $T$ by replacing the indecomposable
summand $T_1$ with $T_1^*$ is cluster tilting.
\end{theorem}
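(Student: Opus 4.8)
The plan is to produce $T_1^*$ as the cone of a minimal approximation and then verify the cluster-tilting axioms for $\mu_1(T)$ directly, using the $2$-Calabi-Yau duality to convert the defining property of the approximation into the required $\Ext^1$-vanishings. Write $T=T_1\oplus\ol{T}$, where $\ol{T}$ is the sum of the remaining indecomposable summands. Since $\cc$ is $\Hom$-finite and Krull-Schmidt, the subcategory $\add(\ol{T})$ is functorially finite, so minimal left and right $\add(\ol{T})$-approximations exist. First I would choose a minimal left $\add(\ol{T})$-approximation $\alpha\colon T_1\to B$ (thus $B\in\add(\ol{T})$ and every morphism $T_1\to\add(\ol{T})$ factors through $\alpha$) and complete it to a triangle
\[
T_1\xrightarrow{\alpha}B\xrightarrow{\beta}T_1^*\xrightarrow{\gamma}\Sigma T_1,\qquad T_1^*:=\opname{cone}(\alpha).
\]
The candidate mutation is $\mu_1(T)=\ol{T}\oplus T_1^*$.

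Next I would check that $\ol{T}\oplus T_1^*$ is rigid, i.e. that $\Ext^1$ vanishes in all four combinations of $\ol{T}$ and $T_1^*$. For $N\in\add(\ol{T})$, applying $\cc(-,N)$ to the triangle and using that $\alpha$ is a left approximation (so $\cc(B,N)\to\cc(T_1,N)$ is onto) together with $\Ext^1(\ol{T},N)=0$ yields $\Ext^1(T_1^*,N)=0$. Dually, applying $\cc(N,-)$ and transporting the \emph{same} surjectivity through the $2$-Calabi-Yau isomorphism $\cc(N,\Sigma^2T_1)\cong D\cc(T_1,N)$ — under which it becomes an injectivity — yields $\Ext^1(N,T_1^*)=0$. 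Finally $\Ext^1(T_1^*,T_1^*)=0$: any $g\colon T_1\to T_1^*$ has $\gamma g\in\Ext^1(T_1,T_1)=0$ by rigidity of $T$, so $g$ factors through $\beta$, and the resulting map $T_1\to B$ factors through the approximation $\alpha$; feeding this into the long exact sequence from $\cc(-,T_1^*)$, in which $\Ext^1(B,T_1^*)=0$ by the previous step, gives the vanishing. Here the $2$-Calabi-Yau property is essential: it is exactly what lets a one-sided approximation property produce the vanishings on both sides. At the same time, minimality forces $\alpha$ to be a radical morphism (as $T_1\notin\add(\ol{T})$), whence $T_1^*$ is indecomposable and nonzero; and if $T_1^*\cong T_1$ then $\gamma\in\Ext^1(T_1,T_1)=0$ would split the triangle and put $T_1$ in $\add(\ol{T})$, a contradiction, so $T_1^*\not\cong T_1$ and, by the same splitting argument, $T_1^*\notin\add(\ol{T})$.

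It then remains to verify maximality: $\Ext^1(\ol{T}\oplus T_1^*,X)=0$ must force $X\in\add(\ol{T}\oplus T_1^*)$. By $2$-Calabi-Yau duality this hypothesis is left-right symmetric, so also $\Ext^1(X,\ol{T}\oplus T_1^*)=0$. Since $T$ is cluster-tilting, $X$ admits a triangle $T^1_X\to T^0_X\to X\to\Sigma T^1_X$ with $T^0_X,T^1_X\in\add(T)$: take a minimal right $\add(T)$-approximation $T^0_X\to X$, and note that the object $T^1_X$ completing the triangle is $\Ext^1$-orthogonal to $T$, hence lies in $\add(T)$. I would then use the exchange triangle to replace the summands $T_1$ occurring in $T^0_X$ and $T^1_X$ by $T_1^*$, producing a presentation of $X$ over $\add(\ol{T}\oplus T_1^*)$; the vanishing $\Ext^1(X,\ol{T}\oplus T_1^*)=0$ then kills the connecting morphism of that presentation, so it splits and exhibits $X$ as a summand of an object of $\add(\ol{T}\oplus T_1^*)$.

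For uniqueness, suppose $Y$ is indecomposable with $Y\oplus\ol{T}$ cluster-tilting and $Y\not\cong T_1$. Then $T_1\notin\add(Y\oplus\ol{T})$, so maximality forces $\Ext^1(Y\oplus\ol{T},T_1)\neq0$, and as $\Ext^1(\ol{T},T_1)=0$ we get $\Ext^1(Y,T_1)\neq0$. Choosing a nonzero class there and completing to a triangle $T_1\to E\to Y\to\Sigma T_1$, the goal is to show $E\in\add(\ol{T})$; then, comparing with the minimal approximation $\alpha$, the object $Y$ is forced to equal $T_1^*$. I expect this last point to be the main obstacle. Showing $E\in\add(\ol{T})$ reduces, through the long exact sequence, to proving $\Ext^1(Y,E)=0$, which in turn amounts to the statement that the chosen class \emph{generates} $\Ext^1(Y,T_1)$ as a module over $\End(Y)$ — that the exchange is essentially binary. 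Controlling this extension space is precisely where the minimality of the approximation, the locality of $\End(Y)$ and $\End(T_1)$, and the $2$-Calabi-Yau duality $\Ext^1(Y,T_1)\cong D\Ext^1(T_1,Y)$ have to be played against one another, and this is the technical heart of the Iyama-Yoshino argument.
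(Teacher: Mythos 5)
The paper does not prove this theorem: it is imported wholesale from Iyama--Yoshino \cite{IyamaYoshino08}, and the closest the text comes to its content is the proof of Lemma~\ref{lemma:exchange-triangles}, which already \emph{assumes} the Iyama--Yoshino construction of $T_1^*$ as the cone of a minimal $\add(\ol{T})$-approximation. So there is no in-paper proof to match; your proposal has to be judged as an attempted proof of the cited result. Your construction of $T_1^*$ and your verification that $\ol{T}\oplus T_1^*$ is rigid are correct and are exactly the standard opening moves (the left-approximation property kills $\Ext^1(T_1^*,\add\ol{T})$, the $2$-Calabi--Yau symmetry $\Ext^1(X,Y)\cong D\Ext^1(Y,X)$ gives the other side, and the factorization argument gives $\Ext^1(T_1^*,T_1^*)=0$). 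Likewise $T_1^*\not\cong T_1$ and $T_1^*\notin\add(\ol{T})$ are fine.

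The rest has genuine gaps. First, ``minimality forces $\alpha$ to be a radical morphism, whence $T_1^*$ is indecomposable'' is a non sequitur: $B$ is not indecomposable in general, and the cone of a minimal radical map can perfectly well decompose. Minimality only rules out direct summands of $T_1^*$ lying in $\add(\ol{T})$; it does not preclude $T_1^*\cong Y_1\oplus Y_2$ with both $Y_i\notin\add(\ol{T})$. Second, the maximality step (``replace the summands $T_1$ in $T^0_X$ and $T^1_X$ by $T_1^*$ via the exchange triangle'') is a sketch whose octahedron bookkeeping is precisely where the work lies, and it is entangled with the indecomposability and uniqueness questions: showing that an indecomposable $X\notin\add(\ol{T})$ with $\Ext^1(\ol{T}\oplus T_1^*,X)=0$ must be isomorphic to $T_1^*$ is essentially the same problem as uniqueness, which you explicitly leave open. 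The missing idea that resolves all three points at once is Iyama--Yoshino's subfactor category: setting $\cz=\{X:\Ext^1(\ol{T},X)=0\}$, the quotient $\cz/[\add\ol{T}]$ carries a triangulated structure whose suspension is exactly your assignment $X\mapsto\opname{cone}(\mbox{minimal left }\add(\ol{T})\mbox{-approximation})$, and this quotient is again $2$-Calabi--Yau with the image of $T_1$ as a cluster-tilting object. Indecomposability of $T_1^*$ then follows because it is the suspension (an equivalence) of an indecomposable, and the existence and uniqueness of the second complement reduce to a statement about that much smaller category, proved there with the locality of $\End(T_1)$ and $\End(Y)$ and the duality $\Ext^1(Y,T_1)\cong D\Ext^1(T_1,Y)$ that you correctly identify as the relevant tools. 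Without that framework (or an equivalent substitute), the argument as written establishes only that $\ol{T}\oplus T_1^*$ is a rigid object properly extending $\ol{T}$, not the full theorem.
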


We call $\mu_1(T)$ the {\em mutation} of $T$ at $T_1$. If $T_1$, \ldots, $T_n$
are the pairwise non isomorphic indecomposable direct summands of
$T$ and $\mathbb{T}_n$ is the $n$-regular tree with distinguished
vertex $t_0$, we define cluster tilting objects $T(t)$ for each
vertex $t$ of $\mathbb{T}_n$ in such a way that $T(t_0)=T$ and,
whenever $t$ and $t'$ are linked by an edge labeled $k$, we
have $T(t')=\mu_k(T(t))$. Notice that the construction simultaneously
yields a natural numbering of the indecomposable summands
$T_i(t')$ of $T(t')$ in such a way that $T_i(t)\iso T_i(t')$
for all $i\neq k$.

\begin{lemma} \label{lemma:exchange-triangles}
Suppose that endoquiver
of $T$ does not have a loop at the vertex corresponding to $T_1$.
Then the space $\Ext^1(T_1, T_1^*)$ is one-dimensional and we have
non split triangles
\begin{equation} \label{eq:exchange-triangles}
\xymatrix{
T_1^* \ar[r]^i & E \ar[r]^p & T_1 \ar[r]^{\eps} & \Sigma T_1^*} \mbox{ and }
\xymatrix{
T_1 \ar[r]^{i'} & E' \ar[r]^{p'} & T_1^* \ar[r]^{\eps'} & \Sigma T_1} \ko
\end{equation}
where
\[
E= \bigoplus_{\stackrel{\mbox{\tiny arrows}}{i\to 1}} T_i
\mbox{ and }
E'=\bigoplus_{\stackrel{\mbox{\tiny arrows}}{1 \to j}} T_j
\]
and the components of the morphisms $i$, $i'$, $p$ and $p'$
represent the corresponding arrows of the endoquivers
of $T$ respectively $\mu_1(T)$.
\end{lemma}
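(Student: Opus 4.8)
The plan is to produce both exchange triangles as completions of minimal one-sided $\add\ol{T}$-approximations, where $\ol{T}=\bigoplus_{i\neq 1}T_i$ is the complement of $T_1$ in $T$, and then to read off $\dim\Ext^1(T_1,T_1^*)$ inside the module category of $B=\End_\cc(T)$ by means of Lemma~\ref{lemma:functor-to-modules}.

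First I would choose a minimal right $\add\ol{T}$-approximation $p\colon E\to T_1$, which exists since $\cc$ is $\Hom$-finite and Krull--Schmidt, and complete it to a triangle $T_1^*\xrightarrow{i}E\xrightarrow{p}T_1\xrightarrow{\eps}\Sigma T_1^*$. As $T_1\notin\add\ol{T}$, the map $p$ is a radical morphism, so the triangle does not split, and the usual dictionary between a minimal right approximation and irreducible morphisms (via the definition of the quiver of $\add T$ and Lemma~\ref{lemma:functor-to-modules}) identifies $E$ with $\bigoplus_{i\to 1}T_i$, the sum over the arrows into the vertex $1$ in the endoquiver of $T$. Here the no-loop hypothesis is exactly what ensures that none of these arrows is a loop, so that $E$ genuinely lies in $\add\ol{T}$ and contains no copy of $T_1$. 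Since Iyama--Yoshino's construction of the exchange object in Theorem~\ref{thm:2-CY-mutation} is precisely this triangle, its cone is the complement $T_1^*$.

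Next I would compute the extension group. Applying $\cc(T,?)$ to the triangle and using $\Ext^1(T,E)=0$ (because $E\in\add T$ and $T$ is rigid) gives $\Ext^1(T,T_1^*)\cong\cok\bigl(\cc(T,p)\bigr)$. Under the equivalence $\add T\iso\proj B$ of Lemma~\ref{lemma:functor-to-modules}, the morphism $\cc(T,p)$ becomes the map $\bigoplus_{i\to 1}P_i\to P_1$ whose components are the arrows into $1$; by the no-loop hypothesis the top of $\rad P_1$ is $\bigoplus_{i\to 1}S_i$ with all $i\neq 1$, so this map is a projective cover of $\rad P_1$ and its cokernel is the simple module $S_1$. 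Finally, rigidity of $\mu_1(T)=\ol{T}\oplus T_1^*$ forces $\Ext^1(T_i,T_1^*)=0$ for $i\neq 1$, whence $\Ext^1(T_1,T_1^*)=\Ext^1(T,T_1^*)\cong S_1$ is one-dimensional.

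For the second triangle I would instead take a minimal left $\add\ol{T}$-approximation $i'\colon T_1\to E'$ and complete it to $T_1\xrightarrow{i'}E'\xrightarrow{p'}T_1^*\xrightarrow{\eps'}\Sigma T_1$; the dual dictionary identifies $E'$ with $\bigoplus_{1\to j}T_j$, again free of any $T_1$ summand thanks to the no-loop hypothesis, and its cone is once more $T_1^*$, consistently with the $2$-Calabi--Yau isomorphism $\Ext^1(T_1^*,T_1)\cong D\Ext^1(T_1,T_1^*)$ being one-dimensional. That the components of $i,i',p,p'$ represent the asserted arrows of the endoquivers of $T$ and of $\mu_1(T)$ is part of the same minimal-approximation/irreducible-morphism correspondence. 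I expect the main obstacle to be the identification of $E$ (and $E'$) with the arrow sums together with the ensuing cokernel computation, since this is the precise point at which the no-loop hypothesis is indispensable: a loop at $1$ would place a copy of $S_1$ in the top of $\rad P_1$ that cannot be covered from $\add\ol{T}$, enlarging $\Ext^1(T_1,T_1^*)$ beyond dimension one.
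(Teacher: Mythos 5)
Your proof is correct and takes essentially the same route as the paper: both complete Iyama--Yoshino's minimal one-sided $\add(\ol{T})$-approximation to a triangle, apply $\cc(T,?)$ and use the no-loop hypothesis to identify the cokernel with the simple module $S_1$, which simultaneously gives the one-dimensionality of $\Ext^1(T_1,T_1^*)$ and the description of $E$ via the minimal projective presentation of $S_1$, with the second triangle obtained by duality. The only cosmetic remark is that in your first paragraph the no-loop hypothesis is not needed to place $E$ in $\add(\ol{T})$ (that holds by construction of the approximation); its real role, which you state correctly in the second paragraph, is to ensure every radical endomorphism of $T_1$ factors through $E$, so that the cokernel is exactly $S_1$.
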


The lemma is well-known to the experts. We include a proof for the
convenience of the reader.

\begin{proof} Let $T'$ be the direct sum of the indecomposable direct
factors of $T$ not isomorphic to $T_1$. By the construction of $T_1^*$
in \cite{IyamaYoshino08}, we have a triangle
\[
\xymatrix{
T_1^* \ar[r]^i & E \ar[r]^p & T_1 \ar[r]^{\eps} & \Sigma T_1^*} \ko
\]
where $p$ is a minimal right $\add(T')$-approximation of $T_1$. Since $T$
is rigid, we obtain an exact sequence
\begin{equation} \label{eq:proj-presentation}
\cc(T,E) \to \cc(T,T_1) \to \cc(T, \Sigma T_1^*) \to 0.
\end{equation}
Since the endoquiver of $T$ does not have a loop at $T_1$, each non isomorphism
from $T_1$ to itself factors through $E$. Since $k$ is algebraically
closed, it follows that the space $\cc(T, \Sigma T_1^*)$ is one-dimensional
and isomorphic to the simple quotient of the indecomposable projective $\cc(T,T)$-module
$\cc(T,T_1)$. Moreover, the sequence~\ref{eq:proj-presentation} is a minimal
projective presentation of this simple quotient. This yields the description
of $E$. By applying the same argument to $\cc^{op}$, we obtain the description
of $E'$.
\end{proof}

\begin{theorem}[Buan-Iyama-Reiten-Scott \protect{\cite{BuanIyamaReitenScott09}}]
\label{thm:quiver-2-CY-mutation}
Suppose that the endoquivers
$Q$ and $Q'$ of $T$ and $T'=\mu_1(T)$
do not have loops nor $2$-cycles. Then $Q'$ is the mutation of
$Q$ at the vertex $1$.
\end{theorem}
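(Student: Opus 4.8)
The plan is to compare, for every ordered pair of vertices $(x,y)$, the net number of arrows $x\to y$ in $Q'$ with the Fomin--Zelevinsky rule~\eqref{eq:matrix-mutation} for the net numbers in $Q$. Since both $Q$ and $Q'$ are assumed free of loops and $2$-cycles, their associated matrices are genuinely skew-symmetric, so it suffices to match the net arrow numbers $b_{xy}$ and $b'_{xy}$, and the only things to check are $b'_{x1}=-b_{x1}$, $b'_{1y}=-b_{1y}$, and $b'_{xy}=b_{xy}+\sgn(b_{x1})\max(0,b_{x1}b_{1y})$ for $x,y\neq 1$. Throughout I read off arrow numbers via Lemma~\ref{lemma:functor-to-modules} and the definition in section~\ref{ss:KrullSchmidtCategories}: the number of arrows $x\to y$ in the endoquiver equals $\dim\irr(T_x,T_y)=\dim\bigl(\rad(T_x,T_y)/\rad^2(T_x,T_y)\bigr)$, the radical being taken in $\add T$ for $Q$ and in $\add T'$ for $Q'$.

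First I would dispose of the arrows incident to the mutated vertex. Because $E=\bigoplus_{a\to 1}T_a$ and $E'=\bigoplus_{1\to b}T_b$ in Lemma~\ref{lemma:exchange-triangles} involve only summands with $a,b\neq 1$ (there is no loop at $1$), both lie in $\add T\cap\add T'$. Since $\mu_1$ is an involution (Theorem~\ref{thm:2-CY-mutation}), applying Lemma~\ref{lemma:exchange-triangles} to $T'$ and its summand $T_1^*$ produces exchange triangles which, by uniqueness of minimal approximations, coincide with the two triangles already at hand. Comparing the two descriptions yields $\bigoplus_{a\to 1^*\ \text{in}\ Q'}T_a=E'$ and $\bigoplus_{1^*\to c\ \text{in}\ Q'}T_c=E$; that is, the arrows at vertex $1$ are reversed with unchanged multiplicities, giving $b'_{x1}=-b_{x1}$ and $b'_{1y}=-b_{1y}$.

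The heart of the matter is the count of arrows between two vertices $x,y\neq 1$. Both $T_x$ and $T_y$ are summands of $T$ and of $T'$, so $\rad(T_x,T_y)$ is literally the same subspace of $\cc(T_x,T_y)$ in either context; hence $b_{xy}$ and $b'_{xy}$ can differ only through the difference between $\rad^2_{\add T}(T_x,T_y)$ and $\rad^2_{\add T'}(T_x,T_y)$. Writing each $\rad^2$ as a sum over intermediate summands, the contributions of the common summands $z\neq 1,1^*$ agree, so the difference is concentrated in the composites factoring through $T_1$ (present only in $\add T$) versus those factoring through $T_1^*$ (present only in $\add T'$). Using the two exchange triangles, I would show that, modulo the common summands, a radical composite $T_x\to T_1\to T_y$ is detected by the arrows $x\to 1$ and $1\to y$, so the composites through $T_1$ account for exactly the number of paths $x\to 1\to y$ in $Q$, while the composites through $T_1^*$ account for the paths $x\to 1^*\to y$ in $Q'$, which by the previous step are the paths $y\to 1\to x$ of $Q$. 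Thus $b'_{xy}-b_{xy}=\#\{x\to 1\to y\}-\#\{y\to 1\to x\}$, and writing $p,q$ for the numbers of arrows $x\to 1$, $1\to x$ and $r,s$ for those $1\to y$, $y\to 1$, the absence of $2$-cycles gives $pq=rs=0$, whence a one-line case check turns $pr-sq$ into $\sgn(b_{x1})\max(0,b_{x1}b_{1y})$; together with Step~1 this is precisely~\eqref{eq:matrix-mutation}.

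The main obstacle is the dimension claim in the previous paragraph: that each path $x\to 1\to y$ contributes exactly one linearly independent class in $\rad/\rad^2$, neither annihilated by a relation in $\End_\cc(T)$ nor already realised as a composite of the common summands, and symmetrically that no spurious extra arrows appear. This independence is exactly what can fail in the presence of loops or $2$-cycles, where the mutated endoquiver need not obey the naive rule and where $2$-cycles would have to be cancelled ad hoc. The hypothesis that both $Q$ and $Q'$ avoid loops and $2$-cycles is what forces the single sign/max correction and nothing more. Granting it, the three cases of~\eqref{eq:matrix-mutation} follow from the bookkeeping above, identifying $Q'$ with $\mu_1(Q)$.
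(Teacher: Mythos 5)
The paper does not actually prove this statement: it is imported from Buan--Iyama--Reiten--Scott and used as a black box, so there is no in-paper argument to compare with and your proposal has to stand on its own. Its first step does stand: applying Lemma~\ref{lemma:exchange-triangles} to the pair $(T',T_1^*)$, using the involutivity from Theorem~\ref{thm:2-CY-mutation} and the one-dimensionality of $\Ext^1(T_1,T_1^*)$ to identify the new exchange triangles with the old ones, correctly yields $b'_{x1}=-b_{x1}$ and $b'_{1y}=-b_{1y}$.

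The gap is in the second step, and it is not just a deferred detail. Write $R''$ for the span of the composites through the summands common to $T$ and $T'$, and $C_1$, $C_{1^*}$ for the images in $\rad(T_x,T_y)$ of composition through $T_1$, respectively $T_1^*$. Your key claim is that $\dim\bigl((R''+C_1)/R''\bigr)$ equals the number of paths $x\to 1\to y$ in $Q$ while $\dim\bigl((R''+C_{1^*})/R''\bigr)$ equals the number of paths $x\to 1^*\to y$ in $Q'$. Taken literally this is false, and it produces impossible arithmetic for the individual arrow multiplicities. Take $Q$ of type $A_3$ with arrows $y\to 1\to x$, realized by the canonical cluster-tilting object in the cluster category; then $\mu_1(Q)$ is the $3$-cycle $x\to 1$, $1\to y$, $y\to x$, and both quivers satisfy the hypotheses. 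There are no arrows $x\to y$ in $Q$ and no paths $x\to 1\to y$, so your count forces the endoquiver of the almost complete object (delete $T_1$) to have $0$ arrows $x\to y$; but there is one path $x\to 1^*\to y$ in $Q'$, so your count then yields $-1$ arrows $x\to y$ in $Q'$. What actually happens is that the endoquiver of the almost complete object can carry a $2$-cycle between $x$ and $y$ which is destroyed differently by adjoining $T_1$ and by adjoining $T_1^*$; the spaces $\irr(T_x,T_y)$ and $\irr(T_y,T_x)$ must therefore be analysed together, and computing the ranks of the two composition maps modulo $R''$ requires a genuine homological argument with the exchange triangles and the $2$-Calabi-Yau duality. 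That computation is the content of the theorem; it does not follow from the no-loop/no-$2$-cycle hypothesis alone, and your text reduces the statement to this unproved (and, as literally formulated, incorrect) dimension claim before granting it.
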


We define a cluster tilting object $T'$ to be {\em reachable
from $T$} if there is a path
\[
\xymatrix{t_0 \ar@{-}[r] & t_1 \ar@{-}[r]& \ldots &  t_{N} \ar@{-}[l] }
\]
in $\mathbb{T}_n$
such that $T(t_N)=T'$ and the quiver of $\End(T(t_i))$ does not have loops
nor $2$-cycles for all $0\leq i\leq N$. It follows from the theorem
above that in this case, for each $0\leq i\leq N$, the endoquiver $Q_{T(t_i)}$
of $T(t_i)$ is obtained from the endoquiver of $T=T(t_0)$ by
the corresponding sequence of mutations, \ie we have
$Q_{T(t_i)}= Q(t_i)$ for all $1\leq i\leq N$.
We define a rigid indecomposable object of $\cc$ to be {\em reachable from $T$}
if it is a direct summand of a reachable cluster tilting object.

\begin{example} \label{ex:linear-A3-mutation}
Consider the cluster category  $\cc$ of the quiver $1 \to 2\to 3$
from example~\ref{sss:cluster-category-A3} with its cluster tilting object
$T$ which is the sum of $P_1$, $P_2$ and $P_3$. If we mutate $T$ at its
summand $T_1=P_1$, we find $T_1^*=P_2/P_1$ and
the exchange triangles~\ref{eq:exchange-triangles} are
\[
\xymatrix{P_2/P_1 \ar[r] & 0 \ar[r] & P_1 \ar[r] & \Sigma(P_2/P_1)}
\quad\mbox{and}\quad
\xymatrix{P_1 \ar[r] & P_2 \ar[r] & P_2/P_1 \ar[r] & \Sigma P_1}.
\]
Notice that the third morphism of the first triangle is the
composition of the isomorphism $P_1 \iso \tau(P_2/P_1)$, already
present in the derived category, with the isomorphism $\tau(P_2/P_1) \iso \Sigma(P_2/P_1)$
obtained thanks to the passage to the cluster category. We obtain the new
quiver
\[
\xymatrix@R=0.4cm@C=0.4cm{  & P_3 \\ P_2 \ar[ur] \ar[dr] & \\  & P_2/P_1 }
\]
which is indeed isomorphic to the mutation of $1\to 2 \to 3$ at the
vertex $1$. If we mutate $T$ at its summand $P_2$, we obtain $P_2^* = P_3/P_1$
and the exchange triangles
\[
\xymatrix{P_3/P_2 \ar[r] & P_1 \ar[r] & P_2 \ar[r] & \Sigma(P_3/P_2)}
\quad\mbox{and}\quad
\xymatrix{P_2 \ar[r] & P_3\ar[r] & P_3/P_2 \ar[r] & \Sigma P_2.}
\]
The new quiver is
\[
\xymatrix@R=0.4cm@C=0.4cm{ & P_3 \ar[rd] &  \\
P_1 \ar[ru] & & P_3/P_2 \ar[ll]}
\]
and it is indeed isomorphic to the mutation of $1\to 2 \to 3$
at the vertex $2$.
\end{example}

\begin{example} \label{ex:alternating-A3-mutation} Let us consider
the following alternating Dynkin quiver of type $A_3$:
\[
Q: \xymatrix{2 & 1 \ar[l] \ar[r] & 3}.
\]
If we identify this quiver with the triangle product $Q\boxten Q'$,
where $Q'$ consists of a single vertex (considered as a source) without
arrows, then the mutation sequence $\mu_\boxten$ 
defined in formula~\ref{eq:inverse-Zamolodchikov-mutation}
simplifies to
\[
\mu_\boxten = \mu_- \mu_+ \ko
\]
where $\mu_+$ is the mutation at the source $1$ and $\mu_-$ the
sequence of mutations $\mu_2 \mu_3$ at the sinks $2$ and $3$.
Let us lift the composition $\mu_\boxten$ to the categorical level
and check that the action of $\mu_\boxten^{h+h'} = \mu_\boxten^{h+2}$
is indeed the identity,
where $h=4$ and $h'=2$ are the Coxeter numbers of $A_3$ and $A_1$.
Let $A$ be the path algebra (\cf section~\ref{ss:path-algebras})
and for each vertex $i$, let $P_i = e_i A$ and
$I_i = \Hom(A e_i, k)$ (these are the indecomposable projective,
respectively injective, $A$-modules, up to isomorphism). In analogy
with example~\ref{sss:cluster-category-A3}, we draw a piece of the quiver of
the derived category $\cd^b(A)$.
\begin{equation} \label{eq:der-cat-A3-alt}
\begin{xy} 0;<1.05pt,0pt>:<0pt,-0.7pt>::
(0,69) *+{\Sigma^{-1}P_3} ="0",
(0,0) *+{\Sigma^{-1}P_2} ="1",
(35,35) *+{\tau P_1} ="2",
(69,69) *+{\tau P_2} ="3",
(69,0) *+{\tau P_3} ="4",
(103,35) *+{P_1} ="5",
(138,69) *+{P_2} ="6",
(138,0) *+{P_3} ="7",
(172,35) *+{I_1} ="8",
(207,69) *+{I_3} ="9",
(207,0) *+{I_2} ="10",
(241,35) *+{\Sigma P_1} ="11",
(275,69) *+{\Sigma P_3} ="12",
(275,0) *+{\Sigma P_2} ="13",
(310,35) *+{\Sigma I_1.} ="14",
"0", {\ar"2"},
"1", {\ar"2"},
"2", {\ar"3"},
"2", {\ar"4"},
"3", {\ar"5"},
"4", {\ar"5"},
"5", {\ar"6"},
"5", {\ar"7"},
"6", {\ar"8"},
"7", {\ar"8"},
"8", {\ar"9"},
"8", {\ar"10"},
"9", {\ar"11"},
"10", {\ar"11"},
"11", {\ar"12"},
"11", {\ar"13"},
"12", {\ar"14"},
"13", {\ar"14"},
\end{xy}
\end{equation}
Let us point out that the derived category in this example
is equivalent to that in example~\ref{sss:cluster-category-A3} by
the derived functor of the Bernstein-Gelfand-Ponomarev
reflection functor associated with the vertex $1$,
\cf \cite{Happel87} \cite{Keller07a}. This explains the
isomorphism between their quivers, which of course respects
the actions of the automorphisms $\Sigma$, $S$ and $\tau$.
If we mutate the initial cluster-tilting object $T=P_1\oplus P_2 \oplus P_3$
at the summand $P_1$, we obtain the exchange triangles
\[
\xymatrix{I_1 \ar[r] & 0 \ar[r] & P_1 \ar[r] & \Sigma I_1}
\quad\mbox{and}\quad
\xymatrix{P_1 \ar[r] & P_2\oplus P_3 \ar[r] & I_1 \ar[r] & \Sigma P_1}.
\]
and the new quiver
\[
\xymatrix@R=0.4cm@C=0.4cm{P_3 \ar[rd] & \\ & I_1 .\\ P_2 \ar[ru]  }
\]
If we now successively mutate at the summands associated with the
vertices $2$ and $3$ of the original quiver, we successively obtain
the quivers
\[
\xymatrix@R=0.4cm@C=0.4cm{P_3 \ar[rd] & & \\ & I_1 \ar[rd] & \\  & & I_3}
\quad\raisebox{-0.9cm}{and}\quad
\xymatrix@R=0.4cm@C=0.4cm{ & I_2 \\ I_1 \ar[ru] \ar[rd] & \\ & I_3.}
\]
What we see in this example is that applying $\mu_\boxten$ to the
initial cluster-tilting object is tantamount to applying the
autoequivalence $\tau^{-1}$. Therefore, if we raise $\mu_\boxten$
to the power $h+h'=4+2$, the resulting sequence of mutations acts on
the initial cluster tilting object like the auto-equivalence $\tau^{-h}\tau^{-2}$. Now the
functor $\tau^{-h} : \cd^b(A) \to \cd^b(A)$ is isomorphic to
$\Sigma^2$ by the classical Theorem~\ref{thm:Gabriel-Happel} below.
On the other hand, the functor induced by $\tau^{-1}$ in the cluster category is
isomorphic to that induced by $\Sigma^{-1}$, by the definition of the
cluster category. So the effect of $\tau^{-h}\tau^{-2}$ in the
cluster category is that of $\Sigma^2 \Sigma^{-2} = \id$.
This is a simple case of `categorical periodicity', proved in
general in section~\ref{eq:der-cat-A3} below.
\end{example}

\subsection{Decategorification: $g$-vectors and tropical $Y$-variables}
\label{ss:decategorification:g-vectors-tropical-Y-variables}
As in section~\ref{ss:decategorification:quiver-mutation}, we assume
that $k$ is an algebrically closed field and $\cc$ a $2$-Calabi-Yau
category with a cluster tilting object $T$.

Let $\ct=\add(T)$ be the full subcategory whose objects are
all direct factors of finite direct sums of copies
of $T$. Let $K_0(\ct)$ be the Grothendieck group
of the additive category $\ct$. Thus, the group
$K_0(\ct)$ is free abelian on the isomorphism
classes of the indecomposable summands of $T$.

\begin{lemma}[\protect{\cite{KellerReiten07}}]
\label{lemma:Keller-Reiten}
For each object $L$ of $\cc$, there is
a triangle
\[
T_1 \to T_0 \to L \to \Sigma T_1
\]
such that $T_0$ and $T_1$ belong to $\ct$. The difference
\[
[T_0] - [T_1]
\]
considered as an element of $K_0(\ct)$ does not depend
on the choice of this triangle.
\end{lemma}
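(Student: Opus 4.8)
The plan is to build the triangle from a right $\add(T)$-approximation and to derive both assertions from the rigidity of $T$ together with the defining property of a cluster-tilting object; since $K_0(\ct)$ is free abelian on the indecomposable summands of $T$, only additivity over direct sums will be needed.

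\emph{Existence.} Since $\cc$ is $\Hom$-finite, the $B$-module $\cc(T,L)$ is finite-dimensional over $B=\End_\cc(T)$; a projective cover $\cc(T,T_0)\twoheadrightarrow\cc(T,L)$ lifts, via Lemma~\ref{lemma:functor-to-modules}, to a morphism $b\colon T_0\to L$ with $T_0\in\ct$ for which $\cc(T,b)$ is surjective, \ie a right $\add(T)$-approximation. Completing $b$ to a triangle $T_1\xrightarrow{a}T_0\xrightarrow{b}L\xrightarrow{c}\Sigma T_1$ and applying $\cc(T,-)$, surjectivity of $\cc(T,b)$ forces $\cc(T,c)=0$, whence an injection $\Ext^1(T,T_1)\hookrightarrow\Ext^1(T,T_0)$. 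The right-hand group vanishes because $T_0\in\ct$ and $T$ is rigid, so $\Ext^1(T,T_1)=0$, and the cluster-tilting property yields $T_1\in\ct$.

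\emph{Well-definedness.} The key initial remark is that \emph{any} triangle $T_1\to T_0\xrightarrow{b}L\to\Sigma T_1$ with $T_0,T_1\in\ct$ already has $b$ a right $\add(T)$-approximation: applying $\cc(T,-)$, the term $\cc(T,\Sigma T_1)=\Ext^1(T,T_1)$ vanishes by rigidity, so $\cc(T,b)$ is onto. It therefore suffices to compare such a triangle with the one attached to the minimal right approximation of $L$, which is unique up to isomorphism. Passing to the right-minimal version of $b$, I decompose $T_0\cong T_0^{\min}\oplus P$ with $P\in\ct$ and $b=(b^{\min},0)$, where $b^{\min}\colon T_0^{\min}\to L$ is minimal with cone $\Sigma T_1^{\min}$ (and $T_1^{\min}\in\ct$ by the existence step). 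Then I must show $T_1\cong T_1^{\min}\oplus P$, after which additivity gives $[T_0]-[T_1]=[T_0^{\min}]-[T_1^{\min}]$ in $K_0(\ct)$, a class depending only on $L$.

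\emph{Main step and obstacle.} The splitting $T_1\cong T_1^{\min}\oplus P$ is the genuinely delicate point. I would obtain it from the octahedral axiom applied to the factorization of $b$ as $T_0^{\min}\oplus P\twoheadrightarrow T_0^{\min}\xrightarrow{b^{\min}}L$: the cones of the two factors are $\Sigma P$ and $\Sigma T_1^{\min}$, and the octahedron produces a triangle $P\to T_1\to T_1^{\min}\to\Sigma P$. Its connecting morphism lies in $\Ext^1(T_1^{\min},P)$, which vanishes because $T_1^{\min},P\in\ct$ and $T$ is rigid; hence the triangle splits. This is exactly where rigidity is indispensable: without it the comparison would fail, since already two $2$-term projective presentations of the $B$-module $\cc(T,L)$ need not have equal classes. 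The index genuinely records the object $L\in\cc$ — including shifts of $\add(T)$-summands that are invisible to $\cc(T,-)$ — and not merely the module $\cc(T,L)$.
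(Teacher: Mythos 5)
Your argument is correct and complete. Note that the paper itself gives no proof of this lemma --- it is quoted from \cite{KellerReiten07} --- so there is nothing internal to compare against; your write-up is essentially the standard argument from that reference, fleshed out. The existence step (lift a surjection from a projective $B$-module to a right $\add(T)$-approximation, then use rigidity of $T$ and the defining property of a cluster-tilting object to force the cocone into $\ct$) is exactly right, as is your preliminary observation that \emph{any} triangle with outer terms in $\ct$ automatically has a right approximation as its middle map. For well-definedness, your reduction to the minimal approximation uses the decomposition of an arbitrary right approximation as (minimal)\,$\oplus$\,(zero map), which is legitimate here because (C1) makes $\cc$ a Krull--Schmidt category; the octahedron then gives the triangle $P\to T_1\to T_1^{\min}\to\Sigma P$, and the splitting from $\Ext^1(T_1^{\min},P)=0$ is the key use of rigidity. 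Your closing remark --- that the index is \emph{not} determined by the $B$-module $\cc(T,L)$ alone, as the example $L=\Sigma T_i$ shows --- correctly identifies why one cannot shortcut the comparison through projective presentations of $\cc(T,L)$.
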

In the situation of the lemma,
we define the {\em index $\ind_T(L)$ of $L$} as
the element $[T_0]-[T_1]$ of $K_0(\ct)$.

\begin{theorem}[\protect{\cite{DehyKeller08}}] \label{thm:Dehy-Keller}
\begin{itemize}
\item[a)] Two rigid objects are isomorphic iff their indices
are equal.
\item[b)] The indices of the indecomposable summands of a cluster tilting
object form a basis of $K_0(\ct)$. In particular, all cluster tilting
objects have the same number of pairwise non isomorphic indecomposable
summands.
\item[c)] In the situation of Lemma~\ref{lemma:exchange-triangles}, if
$T'=\mu_1(T)$ and $L$ is an object of $\cc$, we have
\begin{equation} \label{eq:transformation-of-indices}
\ind_{T'}(L)= \left\{ \begin{array}{ll} \phi_+(\ind_T(L)) & \mbox{if } [\ind_T(L):T_1]\geq 0 \\
\phi_-(\ind_T(L)) & \mbox{if } [\ind_T(L):T_1] \leq 0.
\end{array} \right.
\end{equation}
where $\phi_\pm$ are the linear automorphisms of $K_0(\ct)$ which
fix all classes of indecomposable factors of $T$ not isomorphic to
$T_1$ and send the class of $T_1$ to
\[
\phi_+([T_1]) = -[T_1]+[E] \mbox{ respectively }
\phi_-([T_1]) = -[T_1]+[E'].
\]
\end{itemize}
\end{theorem}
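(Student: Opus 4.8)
The ``only if'' direction of (a) is immediate, since isomorphic objects admit the same triangles, so the content is injectivity. For everything that follows I would work with \emph{minimal} approximation triangles. Given any $L$, choose a minimal right $\add(T)$-approximation $v\colon T_0\to L$ (these exist since $\cc$ is $\Hom$-finite) and complete it to a triangle $T_1\xrightarrow{u}T_0\xrightarrow{v}L\xrightarrow{w}\Sigma T_1$ with $u$ lying in the radical of $\ct$; by Lemma~\ref{lemma:Keller-Reiten} this computes $\ind_T(L)=[T_0]-[T_1]$. Applying $F=\cc(T,-)$ and using $\Ext^1(T,T_i)=0$, one sees that $\cc(T,T_0)\to\cc(T,L)$ is a projective cover and that $\cc(T,u)$ is a minimal projective presentation of the $\End_\cc(T)$-module $\cc(T,L)$; thus $T_0$ and $T_1$ are the projective covers of $\cc(T,L)$ and of its first syzygy, respectively.

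For part (a) proper I would show that $\ind_T(R)$ determines the whole minimal triangle of a rigid $R$. The subtle point is that $T_0$ and $T_1$ are \emph{not} simply the positive and negative parts of the class $\ind_T(R)$ (the two terms may share indecomposable summands, on which $u$ is merely radical), so one cannot read them off naively. Instead, rigidity must be exploited: I would feed $\Ext^1(R,R)=0$ into the long exact sequences obtained by applying $\cc(-,R)$ and $\cc(R,-)$ to the triangle, and combine this with the $2$-Calabi-Yau duality $D\cc(X,Y)\iso\cc(Y,\Sigma^2X)$ to pin down the connecting morphism $w$, and thereby recover the module $\cc(T,R)$ from the bare class $\ind_T(R)$. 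Once the module is recovered, $T_0$, $T_1$ and the presentation map $u$ are forced, and hence so is $R$. \textbf{This is the main obstacle:} disentangling the presentation data from the $K$-theory class is exactly what requires the joint use of rigidity and the Calabi-Yau property, and it is the only place where $2$-Calabi-Yau-ness is genuinely needed.

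Part (c) I would prove by a direct computation with the exchange triangles of Lemma~\ref{lemma:exchange-triangles}. Starting from the index triangle of $L$ relative to $T$, I rewrite each of its $\ct$-terms inside $\ct'=\add(\mu_1 T)$: every indecomposable summand different from $T_1$ already lies in $\ct'$ and is fixed by $\phi_\pm$, while the copies of $T_1$ are resolved using the exchange triangles. If the coefficient $[\ind_T(L):T_1]$ is $\geq 0$, the copies of $T_1$ occur in the ``numerator'' $T_0$ and one uses $T_1^*\to E\to T_1\to\Sigma T_1^*$, which gives $\ind_{T'}(T_1)=[E]-[T_1^*]$ and hence $\ind_{T'}(L)=\phi_+(\ind_T(L))$; if it is $\leq 0$ they occur in the ``denominator'' and the rotated triangle $E'\to T_1^*\to\Sigma T_1$ gives $\phi_-$; when the coefficient is $0$ the two prescriptions agree. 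Splicing the local resolutions together by the octahedral axiom produces a genuine $\ct'$-triangle for $L$, yielding the stated formula.

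Finally, for part (b), linear independence of the indices $\ind_T(T_i')$ of the summands of a cluster-tilting object $T'$ follows from (a): the index is additive on direct sums, so any relation $\sum_i a_i\,\ind_T(T_i')=0$ would equate the indices of the two rigid objects $\bigoplus_{a_i>0}(T_i')^{\oplus a_i}$ and $\bigoplus_{a_i<0}(T_i')^{\oplus(-a_i)}$; by (a) these would be isomorphic, contradicting the Krull-Schmidt theorem unless all $a_i=0$. To see that the $\ind_T(T_i')$ actually \emph{span} $K_0(\ct)$ — and in particular that $T'$ has exactly $n$ indecomposable summands — I would introduce the bilinear pairing on $K_0(\ct)$ afforded by the $2$-Calabi-Yau structure (pairing indices against coindices) and check that on the summands of a cluster-tilting object this pairing has a unimodular Gram matrix; unimodularity forces the number of summands to be $n$ and the $\ind_T(T_i')$ to generate $K_0(\ct)$, which completes the proof.
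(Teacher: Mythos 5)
First, a point of reference: the paper contains no proof of Theorem~\ref{thm:Dehy-Keller} --- it is imported verbatim from \cite{DehyKeller08} --- so your proposal can only be measured against that source. Measured that way, there is a genuine gap, and it sits exactly where you yourself locate the difficulty in part (a). The ingredient you are missing is Proposition~2.1 of \cite{DehyKeller08} (invoked explicitly later in this paper, in the lemma of section~\ref{ss:decategorification:cluster-variables-F-polynomials}): for a \emph{rigid} object, the two terms of a minimal $\add(T)$-presentation triangle share \emph{no} indecomposable summand, so they \emph{are} the positive and negative parts of the index after all --- the worry you raise is resolved by rigidity, not worked around. More seriously, even once $T_0$ and $T_1$ are pinned down, the real content of (a) is that any two morphisms $T_1\to T_0$ with rigid cones have isomorphic cones; your plan to ``pin down the connecting morphism $w$'' and ``recover the module $\cc(T,R)$'' from long exact sequences and the $2$-Calabi-Yau duality is a statement of intent, not an argument, and a direct homological attack stalls because one would need $\Ext^1(R,R')=0$ for the two \emph{different} rigid objects being compared, which is not given. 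The proof in \cite{DehyKeller08} is of a different, geometric nature: the orbit of a morphism $u\in\Hom(T_1,T_0)$ with rigid cone $R$ under $\Aut(T_0)\times\Aut(T_1)$ is open, because the normal space to the orbit embeds into $\Ext^1(R,R)=0$, and two nonempty open orbits in the irreducible variety $\Hom(T_1,T_0)$ must coincide. Nothing in your sketch substitutes for this step.

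The same omission propagates. In (c), the claim that the copies of $T_1$ occur only in the ``numerator'' when $[\ind_T(L):T_1]\geq 0$ is precisely the no-common-summand statement you have not established (and it fails for non-rigid $L$, while (c) is asserted for arbitrary objects); moreover, the octahedral splicing only goes through because the obstruction to lifting the presentation map through $E^{\oplus a}\oplus U_0\to T_0$ lies in $\Ext^1(T_1^{\phantom{*}},T_1^*)\neq 0$ and therefore vanishes exactly when the relevant term contains no summand $T_1$ --- this is where the dichotomy between $\phi_+$ and $\phi_-$ actually comes from, and it needs to be checked rather than assumed. In (b), your deduction of linear independence from (a), additivity of the index and Krull--Schmidt is correct and is essentially the standard argument. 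The spanning half, however, is only asserted: ``check that the Gram matrix is unimodular'' is the whole difficulty, and even granting it, a $k\times k$ unimodular Gram matrix of $k$ vectors in $\Z^n$ shows they span a rank-$k$ direct summand but does not by itself force $k=n$; since the equality of the numbers of summands of all cluster-tilting objects is part of what (b) claims, your argument as phrased is circular at this point.
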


\begin{corollary}[\protect{\cite{FuKeller10}}] \label{cor:decat-tropical-Y-var}
Let $T(t)$ be a cluster tilting object reachable from $T=T(t_0)$.
For each $1\leq j\leq n$, we have
\begin{equation}\label{eq:index-vs-g-vector}
\ind_T(T_j(t))=\sum_{i=1}^n g_{ij}^{t_0}(t) [T_i].
\end{equation}
\end{corollary}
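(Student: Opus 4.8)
The plan is to exhibit both sides as two functions of the \emph{base} vertex $t_0$, governed by one and the same first-order recursion, and agreeing at a single vertex. The key observation is that, once each Grothendieck group $K_0(\add T(s))$ is identified with $\Z^n$ via its canonical basis $[T_i(s)]\leftrightarrow e_i$, the linear maps $\phi_\pm$ occurring in Theorem~\ref{thm:Dehy-Keller}(c) and those occurring in the base-change recursion~\ref{eq:g-vector-recursion2} for $g$-vectors are literally the same automorphisms of $\Z^n$, built from the arrows of the same quiver. Thus the corollary is, in essence, the statement that categorified index mutation coincides with the combinatorial base change of $g$-vectors.

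Concretely, I would fix the target vertex $t$ and the index $j$, and let the base vary along the path $t_0=s_0,s_1,\ldots,s_N=t$ joining $t_0$ to $t$ in $\mathbf{T}_n$. Set $I(s)=\ind_{T(s)}(T_j(t))$, written in the basis $\{[T_i(s)]\}$, and $G(s)=g^s_j(t)$; both are vectors in $\Z^n$, and the goal is $I(t_0)=G(t_0)$. Since $T(t)$ is reachable from $T(t_0)$, the discussion of section~\ref{ss:decategorification:quiver-mutation} guarantees that no loops or $2$-cycles occur along the path, so that the endoquiver $Q_{T(s)}$ coincides with the combinatorial quiver $Q(s)$ at each $s=s_m$, and Lemma~\ref{lemma:exchange-triangles} together with Theorem~\ref{thm:Dehy-Keller}(c) applies at every step; reachability is exactly what makes all the cited machinery available.

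The base case is immediate: at $s=t$ we have $T_j(t)\in\add T(t)$, whence $\ind_{T(t)}(T_j(t))=[T_j(t)]$ has coordinate vector $e_j$, while $g^t_j(t)=e_j$ by the initial condition~\ref{eq:g-vector-recursion1}. For the inductive step I compare the two laws across one edge: if $s$ and $s'$ are adjacent along the path, linked by the edge labeled $k$, with $T(s')=\mu_k(T(s))$, then Theorem~\ref{thm:Dehy-Keller}(c) applied to the fixed object $L=T_j(t)$ expresses $I(s')$ as $\phi_+(I(s))$ or $\phi_-(I(s))$ according to the sign of the $k$-th coordinate $[\ind_{T(s)}(L):T_k(s)]$ of $I(s)$, while the rule~\ref{eq:g-vector-recursion2} expresses $G(s')$ through the same $\phi_\pm$ according to the sign of the $k$-th coordinate of $G(s)$. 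Under the identification $[T_i(s)]\leftrightarrow e_i$ the two maps $\phi_\pm$ agree, both being built from the arrows of $Q(s)$; hence $I(s)=G(s)$ forces the governing signs to coincide and yields $I(s')=G(s')$. Propagating outward from $s=t$ along the path gives $I(t_0)=G(t_0)$. On the overlap where the $k$-th coordinate vanishes one notes that $\phi_+$ and $\phi_-$ agree, so both prescriptions are unambiguous.

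The argument is almost entirely definitional once Theorem~\ref{thm:Dehy-Keller}(c) is in hand, and I expect the main obstacle to be conceptual rather than computational: one must recognise that the $g$-vector recursion~\ref{eq:g-vector-recursion2} is a \emph{base-change} rule (varying $t_0$ with the target $t$ held fixed) and align it with the transformation of the \emph{index} under mutation of the cluster-tilting object, rather than attempting a target-varying induction via the exchange triangles of Lemma~\ref{lemma:exchange-triangles} — the latter would entangle the indices of all the $T_i(t)$ through a correction term and obscure the clean correspondence. Matching the sign conditions, checking the vanishing-coordinate boundary case, and keeping track of the direction of propagation along the path are then routine.
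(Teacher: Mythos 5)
Your proposal is correct and is essentially the paper's own argument: the paper's proof is precisely the one-line ``induction from Theorem~\ref{thm:Dehy-Keller} and the recursive characterization \ref{eq:g-vector-recursion1}, \ref{eq:g-vector-recursion2}'', and you have correctly identified that this is a base-change induction (target $T_j(t)$ fixed, base vertex propagated from $t$ back to $t_0$), with reachability supplying the identification $Q_{T(s)}=Q(s)$ so that the two families of maps $\phi_\pm$ coincide and the governing signs match. The extra care you take with the vanishing-coordinate case and the direction of propagation is exactly the detail the paper leaves implicit.
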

\begin{proof} This follows by induction from Theorem~\ref{thm:Dehy-Keller}
and from the recursive characterization of the $g$-vectors in
equations~\ref{eq:g-vector-recursion1} and \ref{eq:g-vector-recursion2}.
\end{proof}

Notice that a cluster tilting object $T'$ of $\cc$
is also a cluster tilting object
of the opposite category $\cc^{op}$ so that each object
$L$ in $\obj(\cc)=\obj(\cc^{op})$ also has a well-defined
index in $\cc^{op}$ with respect to $T'$; we denote it by
$\ind^{op}_{T'}(L)$.
If we identify the Grothendieck groups of $\add T'$ and $(\add(T'))^{op}$,
this index identifies with $-\ind_{T'}(\Sigma L)$.

\begin{corollary} \label{cor:decat-tropical-Y-variables}
Let $T(t)$ be a cluster tilting object reachable from $T=T(t_0)$.
We have
\begin{equation} \label{eq:opposite-index-vs-tropical-y-variable}
\ind^{op}_{T(t)}(T_j)=\sum_{i=1}^n c_{ji} [T_i(t)] \ko
\end{equation}
where the $c_{ij}$ are given by the tropical $Y$-variable
\begin{equation} \label{eq:opposite-index-vs-tropical-y-variable2}
\eta_i(t)=\prod_{j=1}^n y_j^{c_{ij}} \ko 1\leq i\leq n.
\end{equation}
\end{corollary}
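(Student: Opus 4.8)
The plan is to prove the identity by induction along a path in $\mathbf{T}_n$ witnessing that $T(t)$ is reachable from $T=T(t_0)$, verifying at each mutation that the opposite index transforms exactly as the tropical $Y$-variable. Reachability guarantees that at each step the relevant endoquivers have no loops or $2$-cycles, so that the exchange triangles of Lemma~\ref{lemma:exchange-triangles} and the transformation rule of Theorem~\ref{thm:Dehy-Keller}(c) are available. For the base case $t=t_0$ we have $\eta_i(t_0)=y_i$, hence $c_{ij}=\delta_{ij}$, and indeed $\ind^{op}_{T}(T_j)=[T_j]$ since $T_j$ is a summand of the cluster-tilting object $T$ regarded in $\cc^{op}$. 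It therefore suffices to check that both sides obey the same recursion when we pass from a vertex $t$ to an adjacent vertex $t'$ obtained by mutating at $k$.

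First I would apply Theorem~\ref{thm:Dehy-Keller}(c) in the opposite category $\cc^{op}$, taking the cluster-tilting object $T(t)$ and the fixed object $L=T_j$. The key observation is that passing to $\cc^{op}$ reverses the arrows of the endoquiver $Q(t)$ and interchanges the two exchange triangles of Lemma~\ref{lemma:exchange-triangles}; consequently the two automorphisms $\phi_\pm$ of equation~\ref{eq:transformation-of-indices} get swapped, and in $\cc^{op}$ they send $[T_k(t)]$ to $-[T_k(t)]+\sum_{k\to l}[T_l(t)]$ and to $-[T_k(t)]+\sum_{l\to k}[T_l(t)]$ respectively, the sums ranging over the arrows of $Q(t)$. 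Writing $\ind^{op}_{T(t)}(T_j)=\sum_i c_{ij}[T_i(t)]$ by the induction hypothesis, the coefficient governing the case distinction in~\ref{eq:transformation-of-indices} is $[\ind^{op}_{T(t)}(T_j):T_k(t)]=c_{kj}$. A direct computation then yields $c_{kj}(t')=-c_{kj}(t)$ together with $c_{lj}(t')=c_{lj}(t)+m\,c_{kj}(t)$ for each vertex $l\neq k$ carrying $m$ arrows $k\to l$ (in the branch $c_{kj}\geq 0$), and no change on the remaining $c_{lj}$; the branch $c_{kj}\leq 0$ is symmetric, exchanging the roles of $k\to l$ and $l\to k$.

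I would then expand the tropical recursion~\ref{eq:mut-tropical-y-variable} and match it term by term. In the tropical semifield, if the row $(c_{k1},\ldots,c_{kn})$ of $\eta_k(t)$ is nonnegative then $1\oplus\eta_k(t)=1$ while $1\oplus\eta_k(t)^{-1}=\eta_k(t)^{-1}$, so arrows $k\to l$ contribute the factor $\eta_k(t)^{m}$ to $\eta_l(t')$ and arrows $l\to k$ contribute nothing, with $\eta_k(t')=\eta_k(t)^{-1}$; the nonpositive case is symmetric. Reading off the exponents of the $y_j$, this reproduces precisely the transformation of the $c_{ij}$ computed on the index side, which closes the induction.

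The hinge of the whole argument, and the step I expect to be the main obstacle, is the reconciliation of the two case distinctions: on the index side Theorem~\ref{thm:Dehy-Keller}(c) branches on the sign of the single coefficient $c_{kj}$, whereas the tropical recursion branches on the sign of the entire vector $\eta_k(t)$, i.e. of the whole row $(c_{k1},\ldots,c_{kn})$. The two agree exactly when this row is sign-coherent, so what is really needed is the sign-coherence of the $c$-vectors. In the present framework this is to be extracted from the homology of the exchange triangles --- the coordinates of the opposite index $\ind^{op}_{T(t)}(T_j)$ of the rigid object $T_j$ are of a single sign --- and this is where the substantive work lies. Alternatively, one may recognise $\ind^{op}_{T(t)}(T_j)$ as the $g$-vector of $\cc^{op}$ with initial vertex $t$ evaluated at $t_0$, apply Corollary~\ref{cor:decat-tropical-Y-var} in $\cc^{op}$, and invoke the purely combinatorial duality between $g$-vectors and tropical $Y$-variables to conclude, thereby isolating sign-coherence into the cited decategorification input.
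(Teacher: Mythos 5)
Your argument follows the same route as the paper: the proof given there is literally the one-line remark that the corollary ``follows from the recursive definition~\ref{eq:mut-tropical-y-variable} of the tropical $Y$-variables and Theorem~\ref{thm:Dehy-Keller}'', and your base case, the passage to $\cc^{op}$ with the swap of $\phi_+$ and $\phi_-$, and the term-by-term comparison of the two recursions are exactly what is left implicit in that sentence. You have also correctly located the only delicate point: Theorem~\ref{thm:Dehy-Keller}(c) branches on the sign of the single coefficient $c_{kj}$, while the tropical recursion branches on the sign of the entire row $(c_{k1},\ldots,c_{kn})$, so sign-coherence of that row is what makes the two recursions agree.

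However, the justification you offer for this sign-coherence is wrong as stated. The coordinates of the index of a single rigid indecomposable are \emph{not} in general of one sign: already in the cluster category of $A_2$ with $T=P_1\oplus P_2$, the rigid object $S_1$ sits in a triangle $P_2\to P_1\to S_1\to \Sigma P_2$ and has index $[P_1]-[P_2]$. Moreover, the vector whose sign-coherence you need is not the column $(c_{1j},\ldots,c_{nj})$ (the coordinate vector of $\ind^{op}_{T(t)}(T_j)$) but the row: for fixed $k$, the coefficients of $[T_k(t)]$ in the indices $\ind^{op}_{T(t)}(T_j)$ must have a common sign as $j$ varies. This does hold, and the correct source is Proposition~2.1 of \cite{DehyKeller08} (the same input the paper invokes in the lemma following Corollary~\ref{cor:decat-F-polynomials}) applied to the \emph{whole} rigid object $\Sigma T=\bigoplus_j \Sigma T_j$ rather than to one summand: the direct sum of the minimal $\add(T(t))$-presentation triangles of the $\Sigma T_j$ presents the rigid object $\Sigma T$, and a fixed indecomposable $T_k(t)$ cannot occur in both terms of such a presentation, so the $k$-th coordinates of all the $\ind_{T(t)}(\Sigma T_j)=-\ind^{op}_{T(t)}(T_j)$ share a sign. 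With this substitution your induction closes. Your alternative route through Corollary~\ref{cor:decat-tropical-Y-var} in $\cc^{op}$ together with the combinatorial duality between $g$-vectors and $c$-vectors is also viable, but note that this duality itself presupposes sign-coherence, so it relocates rather than removes the input.
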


\begin{proof} This follows from the recursive definition \ref{eq:mut-tropical-y-variable}
of the tropical $Y$-variables and Theorem~\ref{thm:Dehy-Keller}.
\end{proof}

\begin{example} \label{ex:trop-y-var}
We continue example~\ref{ex:alternating-A3-mutation}. Let
$T=T(t_0)$ be the direct sum of $P_1$, $P_2$ and $P_3$ in the cluster
category $\cc_A$. Let $T'=\mu_\boxten(T)$. In example~\ref{ex:alternating-A3-mutation},
we have obtained that $T'_1 = I_1$, $T'_2=I_3$ (sic!) and $T'_3=I_2$.
Let us compute the indices of the $T_j=P_j$ with respect to
$T'$ in the opposite of the cluster category. For this, we
have to produce `co-resolutions' of the $T_j$ by objects
belonging to $\add(T')$. Now we have the exact sequences
\begin{align*}
 & \xymatrix{0 \ar[r] & P_1 \ar[r] & I_1 \ar[r] & I_2\oplus I_3 \ar[r] & 0} \ko \\
 & \xymatrix{0 \ar[r] & P_2 \ar[r] & I_1 \ar[r] & I_3 \ar[r] & 0} \ko\\
 & \xymatrix{0 \ar[r] & P_3 \ar[r] & I_1 \ar[r] & I_2 \ar[r] & 0} .
\end{align*}
To obtain these, it is best to identify modules
over the path algebra with representations of the quiver
\[
Q^{op} : \xymatrix{ 2 \ar[r] & 1 & 3 \ar[l]} \ko
\]
\cf section~\ref{ss:path-algebras}. Specifically, these
representations are:
\begin{align*}
P_1 &: \xymatrix{0 \ar[r] & k & 0 \ar[l] } \ko
P_2 : \xymatrix{k \ar[r] & k & 0 \ar[l] } \ko
P_3 : \xymatrix{0 \ar[r] & k & k \ar[l] } \\
I_1 &: \xymatrix{k \ar[r] & k & k \ar[l] } \ko
I_2 : \xymatrix{k \ar[r] & 0 & 0 \ar[l] } \ko
I_3 : \xymatrix{0 \ar[r] & 0 & k \ar[l] }
\end{align*}
So we obtain that the matrix whose coefficients are
the integers $c_{ij}$ defined in the corollary is
\[
\left[ \begin{array}{ccc} 1 & 1 & 1 \\ -1 & -1 & 0 \\ -1 & 0 & -1 \end{array} \right].
\]
Notice again that $T'_2=I_3$ and $T'_3=I_2$!
An easy computation shows that the tropical $Y$-variables associated
with the vertex $t=\mu_\boxten(t_0)$ of the regular tree $\mathbb{T}_3$
are indeed the Laurent monomials
\[
y_1 y_2 y_3 \ko y_1^{-1} y_2^{-1} \ko y_1^{-1} y_3^{-1}
\]
whose exponents appear in the rows of this matrix.
\end{example}

\subsection{Decategorification: cluster variables and $F$-polynomials}
\label{ss:decategorification:cluster-variables-F-polynomials}
Let $k$ be the field of complex numbers. Let $T_1, \ldots, T_n$
be the pairwise non isomorphic indecomposable
direct summands of $T$ and $B$ its endomorphism algebra. Let
$P_i=\Hom(T,T_i)$ be the indecomposable projective right $B$-module
associated with $T_i$, $1\leq i\leq n$. Let $S_i$ be the
simple quotient of $P_i$. For a right $B$-module
$M$, the {\em dimension vector} is the $n$-tuple formed by
the $\dim \Hom_B(P_i, M)$, $1\leq i\leq n$.

For two finite-dimensional
right $B$-modules $L$ and $M$ put
\[
\langle L, M \rangle_a = \dim \Hom(L,M) -\dim \Ext^1(L,M)
                        -\dim \Hom(M,L) +\dim \Ext^1(M,L).
\]
This is the antisymmetrization of a truncated Euler form. A priori
it is defined on the split Grothendieck group of the category
$\mod B$ (\ie the quotient of the free abelian group on the
isomorphism classes divided by the subgroup generated by
all relations obtained from direct sums in $\mod B$).

\begin{proposition}[Palu \protect{\cite{Palu08a}}] The form $\langle, \rangle_a$ descends
to an antisymmetric form on $K_0(\mod B)$. Its matrix in the basis
of the simples is the antisymmetric matrix associated with
the quiver of $B$ (loops and $2$-cycles do not contribute
to this matrix).
\end{proposition}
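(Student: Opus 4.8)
The plan is to prove two statements: that the antisymmetrized truncated form descends from the split Grothendieck group to $K_0(\mod B)$, and that on the simples it reproduces the skew-symmetric matrix of the quiver of $B$. Write $\langle L,M\rangle_t = \dim\Hom(L,M)-\dim\Ext^1(L,M)$ for the (non-symmetric) truncated form, so that $\langle L,M\rangle_a=\langle L,M\rangle_t-\langle M,L\rangle_t$. Since $\langle-,-\rangle_a$ is antisymmetric by construction, it is additive in the first argument if and only if it is additive in the second; hence it suffices to check additivity over a short exact sequence $0\to L'\to L\to L''\to 0$ in the first variable. First I would compute the defect of additivity by chasing the two long exact sequences obtained from $\Hom(-,M)$ and $\Hom(M,-)$. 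A direct dimension count shows that the defect of $\langle-,M\rangle_t$ equals $\dim\im\bigl(\delta\colon\Ext^1(L',M)\to\Ext^2(L'',M)\bigr)$ and that the defect of $\langle M,-\rangle_t$ equals $\dim\im\bigl(\partial\colon\Ext^1(M,L'')\to\Ext^2(M,L')\bigr)$, where $\delta$ and $\partial$ are the connecting maps. Thus the defect of $\langle-,-\rangle_a$ is $\dim\im\delta-\dim\im\partial$, and everything reduces to the rank equality $\dim\im\delta=\dim\im\partial$.

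This rank equality is where the hypotheses on $\cc$ enter, and it is the main obstacle. The algebra $B$ is the endomorphism algebra of a cluster-tilting object in a $2$-Calabi-Yau category, hence a $2$-Calabi-Yau tilted algebra; by Keller--Reiten it is Gorenstein of dimension at most one and its stable category of Cohen--Macaulay modules is $3$-Calabi-Yau. I would exploit this to produce a functorial isomorphism $\Ext^2_B(X,Y)\cong D\Ext^1_B(Y,X)$: because $B$ is $1$-Gorenstein, ordinary $\Ext^2$ agrees with Tate cohomology in degree $2$, and the $3$-Calabi-Yau duality of the stable category applies there. The delicate point is to check that this duality is compatible with the connecting homomorphisms of the two long exact sequences, so that under the identifications $\Ext^2(L'',M)\cong D\Ext^1(M,L'')$ and $\Ext^2(M,L')\cong D\Ext^1(L',M)$ the map $\partial$ becomes the transpose of $\delta$ (this includes reconciling the degree-one Tate and ordinary $\Ext$ groups appearing on the two sides). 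Granting the compatibility, a map and its transpose have equal rank, so $\dim\im\delta=\dim\im\partial$, the defect vanishes, and $\langle-,-\rangle_a$ descends to $K_0(\mod B)$.

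With descent in hand, I would finish by evaluating the form on the classes of the simples $S_i$, which form a basis of $K_0(\mod B)$. Since $\dim\Hom(S_i,S_j)=\delta_{ij}$ is symmetric, the $\Hom$-terms cancel and $\langle S_i,S_j\rangle_a=\dim\Ext^1(S_j,S_i)-\dim\Ext^1(S_i,S_j)$. As $k=\C$ is algebraically closed, $\dim\Ext^1(S_j,S_i)$ equals the number of arrows $i\to j$ in the quiver of $B$, in the convention fixed in the proof of Lemma~\ref{lemma:quiver-of-tensor-product}, so $\langle S_i,S_j\rangle_a=b_{ij}$ is precisely the skew-symmetric matrix of the quiver. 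A loop at $i$ contributes equally to both $\Ext^1(S_i,S_i)$ terms and cancels, and a pair of arrows $i\to j$ and $j\to i$ contributes $+1$ and $-1$ to $b_{ij}$; this is exactly the sense in which loops and $2$-cycles do not contribute.
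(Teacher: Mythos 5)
The paper does not prove this proposition: it is imported verbatim from Palu's article, where it is obtained from computations in the ambient $2$-Calabi-Yau category $\cc$ (using the duality $D\cc(X,\Sigma Y)\cong \cc(Y,\Sigma X)$ for the two modules realized as $\cc(T,X)$, $\cc(T,Y)$). Your route stays entirely inside $\mod B$ and leans instead on the Keller--Reiten structure theory of $2$-CY tilted algebras, which is a genuinely different strategy. The reduction is correct as far as it goes: additivity in one variable suffices by antisymmetry, the defect of $\langle -,M\rangle_t$ over $0\to L'\to L\to L''\to 0$ is $\dim\im\bigl(\delta:\Ext^1(L',M)\to\Ext^2(L'',M)\bigr)$, the defect of $\langle M,-\rangle_t$ is $\dim\im\bigl(\partial:\Ext^1(M,L'')\to\Ext^2(M,L')\bigr)$, and the evaluation on simples in your last paragraph is right, including the convention matching arrows $i\to j$ with $\Ext^1(S_j,S_i)$.

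The gap is the isomorphism you propose to produce: $\Ext^2_B(X,Y)\cong D\Ext^1_B(Y,X)$ with \emph{ordinary} $\Ext^1$ on the right is false. Take $\cc=\cc_Q$ for a Dynkin quiver $Q$ and $T$ the canonical cluster-tilting object, so that $B=kQ$ is hereditary (and is a $2$-CY tilted algebra): then $\Ext^2_B$ vanishes identically while $\Ext^1_B(Y,X)$ does not. What the $1$-Gorenstein property together with the $3$-Calabi-Yau duality of the stable Cohen--Macaulay category actually yields is $\Ext^2_B(X,Y)\cong D\widehat{\Ext}{}^1(Y,X)$, where $\widehat{\Ext}{}^1$ is the Tate group, in general a proper quotient of $\Ext^1$; so the ``reconciliation of degree-one Tate and ordinary $\Ext$'' that you defer is not a compatibility check but the crux, and it cannot be done as an identification of groups. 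The argument can, however, be repaired at the level of ranks: over a $1$-Gorenstein algebra the comparison map $\Ext^n\to\widehat{\Ext}{}^n$ is an isomorphism for $n\geq 2$ and surjective for $n=1$, and it commutes with connecting maps, being a morphism of cohomological functors. Hence $\delta$ and $\partial$ factor as the Tate connecting maps $\hat{\delta}$ and $\hat{\partial}$ precomposed with surjections, so $\dim\im\delta=\dim\im\hat{\delta}$ and $\dim\im\partial=\dim\im\hat{\partial}$; and $\hat{\partial}$ is, up to sign, the transpose of $\hat{\delta}$ under the $3$-CY duality of the singularity category, both being given by composition with the connecting morphism $L''\to\Sigma L'$ of the induced triangle. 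With that substitution your proof closes; as literally written, the key step would fail.
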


 In notational accordance with equation~\ref{eq:index-vs-g-vector},
for $L\in \cc$, we define the integer $g_i(L)$ to be the
multiplicity of $[T_i]$ in the index $\ind(L)$, $1\leq i\leq n$.
We define the element $X_L$ of the field $\Q(x_1, \ldots, x_n)$ by
\begin{equation} \label{eq:cluster-char}
X_L= \prod_{i=1}^n x_i^{g_i(L)} \sum_e \chi(\Gr_e(\Ext^1(T,L))) \prod_{i=1}^n x_i^{\langle S_i, e\rangle_a} \ko
\end{equation}
where the sum ranges over all
$n$-tuples $e\in \N^n$, the {\em quiver Grassmannian} $\Gr_e(\Ext^1(T,L))$ is the variety
of all $B$-submodules of the $B$-module $\Ext^1(T,L)$ whose dimension
vector is $e$ and $\chi$ denotes the Euler characteristic of singular cohomology
with coefficients in $\C$. Notice that we have
$X_{T_i}=x_i$, $1\leq i\leq n$. The expression~\ref{eq:cluster-char}
is a vastly generalized form of Caldero-Chapoton's formula \cite{CalderoChapoton06}.
We define the {\em $F$-polynomial} associated with $L$ as the
integer polynomial in the indeterminates $y_1$, \ldots, $y_n$ given by
\[
F_L=\sum_e \chi(\Gr_e(\Ext^1(T,L))) \prod_{i=1}^n y_i^{e_i}
\]

Now let $Q$ be the endoquiver of $T$
in $\cc$. We assume that $Q$ does not have loops or $2$-cycles.
Let $\ca_Q$ be the associated cluster algebra.

\begin{theorem}[Palu \protect{\cite{Palu08a}}] \label{thm:Palu-formula}
If $L$ and $M$ are objects of $\cc$ such that
$\Ext^1(L,M)$ is one-dimensional and
\[
\xymatrix{L \ar[r]^i & E \ar[r] & M \ar[r] & \Sigma L} \mbox{ and }
\xymatrix{M \ar[r]^{i'} & E' \ar[r] & L \ar[r] & \Sigma M}
\]
are `the' two non split triangles, then we have
\begin{align}
\label{eq:Palu-formula-cluster-variables}
X_L X_M &= X_E + X_{E'} \\
\label{eq:Palu-formula-F-polynomials}
F_L F_M &= F_E \prod_{i=1}^n y_i^{d_i} + F_{E'} \prod_{i=1}^n y_i^{d'_i} \ko
\end{align}
where
\[
d_i = \dim \ker(\cc(T_i,\Sigma L) \arr{i_*} \cc(T_i, \Sigma E)) \mbox{ and }
d'_i = \dim \ker(\cc(T_i, \Sigma M) \arr{i'_*} \cc(T_i, \Sigma E')).
\]
\end{theorem}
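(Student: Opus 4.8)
The plan is to prove the two displayed identities by first isolating their ``$g$-vector'' (index) content from their ``Grassmannian'' (Euler-characteristic) content, and then treating each part separately. Write $M_X=\Ext^1(T,X)=\cc(T,\Sigma X)$ for the $B$-module attached to an object $X$, and $\hat{y}_j=\prod_i x_i^{b_{ij}}$ as in section~\ref{ss:cluster-variables-g-vectors}. The first observation is that the defining formula~\ref{eq:cluster-char} factors as
\[
X_L=\Big(\prod_{i=1}^n x_i^{g_i(L)}\Big)\,F_L(\hat{y}_1,\ldots,\hat{y}_n)\ko
\]
because, by the preceding proposition of Palu, the matrix of the antisymmetrized form $\langle-,-\rangle_a$ in the basis of simples is exactly the exchange matrix $(b_{ij})$, so that $\langle S_i,e\rangle_a=\sum_j b_{ij}e_j$. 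Substituting this factorization into~\ref{eq:Palu-formula-cluster-variables} and using~\ref{eq:Palu-formula-F-polynomials}, the cluster-variable formula reduces to proving the $F$-polynomial formula together with the two index identities
\begin{align}
\ind_T(E) &= \ind_T(L)+\ind_T(M)+\sum_{i,j} b_{ij}\,d_j\,[T_i]\ko \label{eq:plan-index-E}\\
\ind_T(E') &= \ind_T(L)+\ind_T(M)+\sum_{i,j} b_{ij}\,d'_j\,[T_i]. \label{eq:plan-index-Eprime}
\end{align}
Thus everything is reduced to~\ref{eq:Palu-formula-F-polynomials} and to \ref{eq:plan-index-E}--\ref{eq:plan-index-Eprime}.

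Next I would produce the relevant long exact sequences. Applying the cohomological functor $\cc(T,-)$ to the first exchange triangle $L\arr{i}E\arr{p}M\arr{\eps}\Sigma L$ of Lemma~\ref{lemma:exchange-triangles} gives an exact sequence of $B$-modules
\[
\cc(T,M)\arr{\eps_*}M_L\arr{i_*}M_E\arr{p_*}M_M\arr{\partial}\Ext^2(T,L)\ko
\]
and symmetrically for $M\arr{i'}E'\arr{p'}L\arr{\eps'}\Sigma M$. By Lemma~\ref{lemma:functor-to-modules} the functor $\cc(T_i,-)$ reads off the $i$-th component of a dimension vector, so the exponents appearing in~\ref{eq:Palu-formula-F-polynomials} are $d_i=\dim(\ker i_*)_i=\dim(\im \eps_*)_i$ and likewise $d'_i=\dim(\ker i'_*)_i$. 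The index identities~\ref{eq:plan-index-E}--\ref{eq:plan-index-Eprime} I would then obtain from a direct computation of indices using $\add(T)$-resolutions and the octahedral axiom (the mechanism underlying Theorem~\ref{thm:Dehy-Keller}): the index is not additive along a triangle, and its failure to be additive is precisely the class in $K_0(\ct)$ measured by the connecting map $\eps_*$, i.e.\ by the numbers $d_j$; the $2$-Calabi-Yau duality identifies $\Ext^2(T,L)$ with $D\cc(L,T)$ and keeps the term $\partial$ under control.

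The combinatorial heart is the $F$-polynomial formula~\ref{eq:Palu-formula-F-polynomials}, which I would prove by a stratification of quiver Grassmannians in the spirit of Caldero--Keller and Hubery. Expanding the left-hand side,
\[
F_L F_M=\sum_{e,f}\chi\big(\Gr_e(M_L)\big)\,\chi\big(\Gr_f(M_M)\big)\prod_{i} y_i^{e_i+f_i}\ko
\]
I would fiber the Grassmannians $\Gr_g(M_E)$ and $\Gr_g(M_{E'})$ over the products $\Gr(M_L)\times\Gr(M_M)$ using the maps $i_*,p_*$ (respectively $i'_*,p'_*$): a submodule $W\subseteq M_E$ is sent to the pair $(i_*^{-1}W,\,p_*W)$, and conversely a pair $(U\subseteq M_L,\,V\subseteq M_M)$ lifts either to $M_E$ or to $M_{E'}$ according to a dichotomy governed by whether the induced extension class, a linear functional arising from the one-dimensional $\Ext^1(L,M)$, vanishes on $V$. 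The fibers of these projections are affine spaces, hence have Euler characteristic $1$; applying the additivity of $\chi$ over the resulting strata together with the multiplicativity $\chi(A\times B)=\chi(A)\chi(B)$ then matches the two sides, the global shifts $\prod y_i^{d_i}$ and $\prod y_i^{d'_i}$ appearing precisely because the connecting maps $\eps_*,\eps'_*$ force every lifted submodule to contain the image submodule, of dimension vector $(d_i)$ respectively $(d'_i)$.

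The main obstacle is this last geometric dichotomy. The delicate points are: (i) showing that each pair $(U,V)$ lifts to exactly one of $M_E$, $M_{E'}$, so that the two right-hand terms genuinely partition the contributions to $F_L F_M$, which rests on the one-dimensionality of $\Ext^1(L,M)$ reducing the obstruction to a single linear condition; (ii) proving that the fibers of the projections are affine, so that Euler characteristics multiply rather than merely add; and (iii) checking that the $\Ext^2$-terms, equivalently by $2$-Calabi-Yau duality the $\cc(L,T)$-type terms, in the long exact sequences do not contaminate the count, which uses that $T$ is cluster-tilting, so $\Ext^1(T,T)=0$ and the exchange triangles of Lemma~\ref{lemma:exchange-triangles} are minimal. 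Once these are in place, \ref{eq:Palu-formula-F-polynomials} together with \ref{eq:plan-index-E}--\ref{eq:plan-index-Eprime} yields both formulas.
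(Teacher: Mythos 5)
The paper does not prove this theorem from scratch: its entire proof is a reduction to Palu's published results, namely the observation that $X_L=X^{\mbox{\tiny Palu}}_{\Sigma L}$ combined with Theorem~4 of \cite{Palu08a} for the formula \ref{eq:Palu-formula-cluster-variables} and with formula~(2) in the proof of Lemma~16 of \cite{Palu08a} for \ref{eq:Palu-formula-F-polynomials}. What you propose is therefore a genuinely different route in form --- you are reconstructing Palu's own argument rather than citing it --- and your architecture is the correct one: the factorization $X_L=\prod_i x_i^{g_i(L)}F_L(\hat{y}_1,\ldots,\hat{y}_n)$ does follow from Palu's proposition on $\langle\,,\rangle_a$, the reduction of \ref{eq:Palu-formula-cluster-variables} to \ref{eq:Palu-formula-F-polynomials} plus the two index identities is exactly right, and the long exact sequences obtained by applying $\cc(T,-)$ to the two triangles are the correct source of the exponents $d_i$, $d'_i$. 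This is, in substance, the skeleton of Palu's Sections 4--5.

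However, as a proof your text has a gap precisely where all the work lies, and you should be aware that the delicate points you list are not routine. First, in the $2$-Calabi-Yau setting the sequence $M_L\to M_E\to M_M$ is \emph{not} short exact at either end (unlike the hereditary module-category situation of Caldero--Keller), so a submodule $W\subseteq M_E$ does not determine a pair of submodules of $M_L$ and $M_M$ in the naive way; one must work with $\im(i_*)$ and $\im(p_*)=\ker(\partial)$, and the monomial shifts $\prod_i y_i^{d_i}$, $\prod_i y_i^{d'_i}$ arise exactly from this failure --- you gesture at this but do not carry it out, and the index identities \ref{eq:plan-index-E}--\ref{eq:plan-index-Eprime} likewise require a genuine computation (Palu's Lemma on indices in triangles), not just the octahedral axiom. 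Second, the dichotomy as you state it --- ``each pair $(U,V)$ lifts to exactly one of $M_E$, $M_{E'}$'' --- is not literally true at the level of sets; the correct statement is an identity of Euler characteristics of constructible lifting loci, proved by exhibiting the relevant projections as Zariski-locally trivial fibrations with affine fibers and analyzing the linear condition coming from the one-dimensional $\Ext^1(L,M)$. That analysis is the main theorem of \cite{Palu08a} and cannot be dispatched in the space you allot to it. In short: your plan is the right plan, but to be a proof it must either fill in the geometric dichotomy in detail or, as the paper does, simply invoke Palu's Theorem~4 and Lemma~16 together with the shift $X_L=X^{\mbox{\tiny Palu}}_{\Sigma L}$.
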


\begin{proof} For $L\in\cc$, let $X_L^{\mbox{\tiny Palu}}$ be the polynomial defined
by Palu in \cite{Palu08a}. We then have $X_L=X^{\mbox{\tiny Palu}}_{\Sigma L}$, as follows from
the formula at the end of section~2 in \cite{Palu08a}. By Theorem~4 of
\cite{Palu08a}, the map $L \mapsto X_L^{\mbox{\tiny Palu}}$ satisfies the
formula~\ref{eq:Palu-formula-cluster-variables}. Hence so does the map
$L \mapsto X_L$. The formula~\ref{eq:Palu-formula-F-polynomials} is
implicit in section~5.1 of \cite{Palu08a} and in particular in
formula~(2) of the proof of Lemma~16 of \cite{Palu08a}.
\end{proof}

\begin{corollary} \label{cor:decat-F-polynomials}
If the cluster tilting object $T(t)$ associated with
a vertex $t$ of $\mathbb{T}_n$ is reachable from $T(t_0)$, we have
\[
X_{T_i(t)}= X_i(t) \mbox{ and } F_{T_i(t)} = F_i(t)
\]
for all $1\leq i\leq n$.
\end{corollary}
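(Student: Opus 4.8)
The plan is to induct along a path $t_0=s_0,s_1,\ldots,s_N=t$ of vertices in $\mathbf{T}_n$ witnessing that $T(t)$ is reachable from $T=T(t_0)$, proving that the categorical and the combinatorial families satisfy the same recursion with the same initial conditions. For the base case $t=t_0$ one has $T_i(t_0)=T_i$, and since $T$ is rigid $\Ext^1(T,T_i)=0$; hence the only nonzero term in \ref{eq:cluster-char} and in the definition of $F_L$ is the one indexed by $e=0$, giving $X_{T_i}=x_i=X_i(t_0)$ and $F_{T_i}=1=F_i(t_0)$. Along an edge labelled $k$ only the $k$-th entries change, and for $i\neq k$ we have $T_i(s_{m+1})=T_i(s_m)$ while $X_i(s_{m+1})=X_i(s_m)$ and $F_i(s_{m+1})=F_i(s_m)$, so the inductive hypothesis transports these entries automatically. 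Thus everything comes down to the step at the mutated vertex. Because the chosen path witnesses reachability, Theorem~\ref{thm:quiver-2-CY-mutation} gives $Q_{T(s_m)}=Q(s_m)$, so Lemma~\ref{lemma:exchange-triangles} supplies exchange triangles whose middle terms are $E=\bigoplus_{a\to k}T_a(s_m)$ and $E'=\bigoplus_{k\to b}T_b(s_m)$, the sums taken over the arrows of $Q(s_m)$ into and out of $k$.

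For the cluster variables, I would apply Palu's multiplication formula \ref{eq:Palu-formula-cluster-variables} to this pair of triangles. The map $L\mapsto X_L$ is multiplicative on direct sums: this is immediate from \ref{eq:cluster-char}, since the index is additive and the Euler-characteristic generating function of the quiver Grassmannians factors over a direct sum. Combined with the inductive hypothesis $X_{T_a(s_m)}=X_a(s_m)$, the formula becomes
\[
X_{T_k(s_{m+1})}\,X_{T_k(s_m)}=\prod_{a\to k}X_a(s_m)+\prod_{k\to b}X_b(s_m),
\]
which is exactly the exchange relation \ref{eq:exchange-relation}. Since $X_{T_k(s_m)}=X_k(s_m)$ by induction, we read off $X_{T_k(s_{m+1})}=X_k(s_{m+1})$.

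The $F$-polynomials are handled the same way, now via Palu's formula \ref{eq:Palu-formula-F-polynomials} and multiplicativity of $L\mapsto F_L$. This gives
\[
F_{T_k(s_{m+1})}\,F_{T_k(s_m)}=\Bigl(\prod_{a\to k}F_a(s_m)\Bigr)\prod_i y_i^{d_i}+\Bigl(\prod_{k\to b}F_b(s_m)\Bigr)\prod_i y_i^{d'_i},
\]
whereas the combinatorial exchange relation \ref{eq:exchange-relation-F-polynomials}, with $\eta_k(s_m)=\prod_i y_i^{c_{ik}}$, reads
\[
F_k(s_m)F_k(s_{m+1})=\Bigl(\prod_{c_{ik}<0}y_i^{-c_{ik}}\Bigr)\prod_{a\to k}F_a(s_m)+\Bigl(\prod_{c_{ik}>0}y_i^{c_{ik}}\Bigr)\prod_{k\to b}F_b(s_m).
\]
The polynomial factors already match term by term, so the whole statement reduces to the monomial identities $d_i=\max(0,-c_{ik})$ and $d'_i=\max(0,c_{ik})$.

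This last identification is the point I expect to be the main obstacle. Unwinding Palu's definition of $d_i$ and $d'_i$ by applying $\cc(T_i,-)$ to the two exchange triangles and chasing the resulting long exact sequences, one finds that $d_i$ is the $i$-th component of the dimension vector of the $\End(T)$-module $\ker(\Ext^1(T,T_k(s_{m+1}))\to\Ext^1(T,E))$, and $d'_i$ the corresponding component of $\ker(\Ext^1(T,T_k(s_m))\to\Ext^1(T,E'))$. It then remains to show that $d'_i-d_i=c_{ik}$ and that, for each $i$, one of $d_i,d'_i$ vanishes. The first equality should follow from Corollary~\ref{cor:decat-tropical-Y-variables}, which expresses the opposite index $\ind^{op}_{T(s_m)}$ of the summands of $T$ through the exponents of the tropical $Y$-variable $\eta_k(s_m)$; the signed difference $d'_i-d_i$ must be matched with the multiplicity of $T_k(s_m)$ in that opposite index. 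The vanishing $\min(d_i,d'_i)=0$ reflects the coprimality of the two monomials in \ref{eq:exchange-relation-F-polynomials} and should come from a short separate argument that the two kernels cannot be simultaneously nonzero at a given $i$. This homological bookkeeping, rather than any new idea, is where the real work lies.
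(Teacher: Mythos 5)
Your overall strategy --- induction along a reachable path, Palu's two multiplication formulas applied to the exchange triangles of Lemma~\ref{lemma:exchange-triangles}, multiplicativity of $X_L$ and $F_L$ on direct sums, and comparison with the combinatorial exchange relations \ref{eq:exchange-relation} and \ref{eq:exchange-relation-F-polynomials} --- is exactly the route the paper takes, and the half of the argument establishing $X_{T_i(t)}=X_i(t)$ is complete as you state it (the base case via rigidity of $T$ is also fine).

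The gap is in the $F$-polynomial half. You correctly reduce everything to the identities $d_i=\max(0,-c_{ik})$ and $d'_i=\max(0,c_{ik})$, and you correctly guess the two ingredients (the signed difference of $d_i$ and $d'_i$ equals the multiplicity of $[T_k(s_m)]$ in $\ind^{op}_{T(s_m)}(T_i)$, and one of $d_i,d'_i$ vanishes), but you prove neither; you only assert that they ``should follow.'' This is precisely the content of the unnamed lemma that the paper places immediately after the corollary, and its proof is genuine work, not bookkeeping. Concretely: (i) to compute the two kernels one must pass through the $2$-Calabi-Yau duality and Lemma~\ref{lemma:functor-to-modules} to recognize $\dim\ker\bigl(\cc(T_i,\Sigma T_k(s_{m+1}))\to\cc(T_i,\Sigma E)\bigr)$ as the multiplicity of the simple $S_k$ in the socle of the $\End(T(s_m))$-module $\cc(T(s_m),\Sigma^2 T_i)$, and the other kernel dimension as the multiplicity of $S_k$ in the top of $\cc(T(s_m),\Sigma T_i)$; (ii) these are in turn the multiplicities $m_0$, $m_1$ of $T_k(s_m)$ in the two terms of a \emph{minimal} $\add T(s_m)$-presentation triangle of $T_i$, whose difference is by definition the coefficient of $[T_k(s_m)]$ in $\ind^{op}_{T(s_m)}(T_i)$, hence the tropical exponent by Corollary~\ref{cor:decat-tropical-Y-variables}; and (iii) the vanishing of one of $d_i$, $d'_i$ is not a consequence of ``coprimality of the two monomials in \ref{eq:exchange-relation-F-polynomials}'' --- that coprimality is part of what is being proved --- but follows from Proposition~2.1 of \cite{DehyKeller08}, which says that an indecomposable summand of a cluster-tilting object cannot occur in both terms of a minimal presentation triangle of a rigid object. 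Without (i)--(iii) the inductive step for the $F$-polynomials does not close, so as written the second identity of the corollary is not established.
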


\begin{proof}
Indeed, the first formula follows by induction from Theorem~\ref{thm:Palu-formula} and
the description of the exchange triangles given in Lemma~\ref{lemma:exchange-triangles}.
The second formula similarly follows from Theorem~\ref{thm:Palu-formula} once we
show that the integers $d_i$ and $d_i'$ coincide with the corresponding exponents
in the tropical $Y$-variable. This results from the categorical
interpretation of the tropical $Y$-variables in
Corollary~\ref{cor:decat-tropical-Y-variables} and
the following lemma.
\end{proof}

\begin{lemma} Let $L$ be a rigid object of $\cc$ and let $1\leq k\leq n$.
Let
\[
\xymatrix{T_k^* \ar[r] & E \ar[r] & T_k \ar[r] & \Sigma T_k^*} \mbox{ and }
\xymatrix{T_k \ar[r] & E' \ar[r] & T_k^* \ar[r] & \Sigma T_k}
\]
be `the' exchange triangles. Let $m$ be the multiplicity of $[T_k]$ in $\ind^{op}_T(L)$.
Then we have
\begin{equation} \label{eq:kernel-dimensions}
m= \left\{ \begin{array}{ll} \dim\ker(\cc(L, \Sigma T_k^*) \to \cc(L, \Sigma E)) & \mbox{if } m\geq 0; \\
\dim\ker(\cc(L, \Sigma T_k) \to \cc(L, \Sigma E')) & \mbox{if } m\leq 0.
\end{array} \right.
\end{equation}
Moreover, at least one of the two integers on the right hand side vanishes.
\end{lemma}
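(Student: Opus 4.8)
The plan is to compute both kernels homologically, to recognise them as multiplicities occurring in the coindex of $L$, and to extract the dichotomy from the rigidity of $L$.

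First I would reduce each kernel to a \emph{stable} hom space. Applying $\cc(L,-)$ to the first exchange triangle of Lemma~\ref{lemma:exchange-triangles} gives the exact sequence
\[
\cc(L,E) \xrightarrow{p_*} \cc(L,T_k) \xrightarrow{\eps_*} \cc(L,\Sigma T_k^*) \xrightarrow{(\Sigma i)_*} \cc(L,\Sigma E),
\]
so the first kernel is $\im(\eps_*)$. Since $E\in\add(T/T_k)$ and, by the Iyama--Yoshino construction underlying Lemma~\ref{lemma:exchange-triangles}, the map $p\colon E\to T_k$ is a (minimal) right $\add(T/T_k)$-approximation, the subspace $\im(p_*)=\ker(\eps_*)$ is exactly the set of morphisms $L\to T_k$ factoring through $\add(T/T_k)$. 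Hence $\dim\im(\eps_*)=\dim\ul{\cc}(L,T_k)$, where $\ul{\cc}$ denotes morphisms modulo those factoring through $\add(T/T_k)$. The same argument on the second exchange triangle shows the second kernel has dimension $\dim\ul{\cc}(L,T_k^*)$. It therefore suffices to prove $\dim\ul{\cc}(L,T_k)=\max(m,0)$ and $\dim\ul{\cc}(L,T_k^*)=\max(-m,0)$, which gives both branches of the statement together with the vanishing of one of the two integers.

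Next I would identify $\dim\ul{\cc}(L,T_k)$ with a coindex multiplicity. Dualising Lemma~\ref{lemma:Keller-Reiten} (working in $\cc^{op}$) yields a triangle $L\to T^0\to T^1\to\Sigma L$ with $T^0,T^1\in\add T$ and $L\to T^0$ a minimal left $\add T$-approximation, so that $\ind^{op}_T(L)=[T^0]-[T^1]$ and $m=[T^0:T_k]-[T^1:T_k]$. The multiplicity $[T^0:T_k]$ is the dimension of the $T_k$-component of the top of the $\End(T)^{op}$-module $\cc(L,T)$, i.e. of $\cc(L,T_k)$ modulo morphisms factoring through $\rad(T,T_k)$. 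The hypothesis that $Q_T$ has no loop at $k$ says precisely that every non-isomorphism $T_k\to T_k$ factors through $\add(T/T_k)$, so the two quotients agree and $\dim\ul{\cc}(L,T_k)=[T^0:T_k]$.

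The crux is the dichotomy, which I would obtain from the rigidity of $L$: by Dehy--Keller \cite{DehyKeller08} the outer terms of the minimal triangle computing the (co)index of a rigid object have no common indecomposable summand, so $[T^0:T_k]\cdot[T^1:T_k]=0$ and $\dim\ul{\cc}(L,T_k)=[T^0:T_k]=\max(m,0)$. For the $T_k^*$-term I would repeat this for $T'=\mu_k(T)$, noting $\add(T'/T_k^*)=\add(T/T_k)$; one first checks that $Q_{T'}$ has no loop at the vertex $T_k^*$, which holds because $\Ext^1(T_k,T_k^*)$, and hence by the $2$-Calabi-Yau property $\Ext^1(T_k^*,T_k)$, is one-dimensional, forcing any non-isomorphism $T_k^*\to T_k^*$ to factor through $\add(T/T_k)$. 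Then $\dim\ul{\cc}(L,T_k^*)=[U^0:T_k^*]=\max(m',0)$, where $U^0$ is the $0$th term of the minimal $T'$-coresolution and $m'$ is the multiplicity of $[T_k^*]$ in $\ind^{op}_{T'}(L)$; finally the coindex mutation rule (Theorem~\ref{thm:Dehy-Keller}(c) read in $\cc^{op}$) gives $m'=-m$, so $\dim\ul{\cc}(L,T_k^*)=\max(-m,0)$. The main obstacle is this last step: both the no-common-summand property and the sign bookkeeping $m'=-m$ rest on the combinatorics of rigid objects and must be combined with the propagation of the loop-freeness from $k$ in $Q_T$ to $T_k^*$ in $Q_{T'}$, which is what makes the coindex interpretation available on both sides.
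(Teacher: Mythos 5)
Your proof is correct, and while it shares the paper's skeleton --- the minimal triangle $L\to T^0\to T^1\to\Sigma L$ with $T^0,T^1\in\add T$ giving $m=m_0-m_1$, together with Proposition~2.1 of \cite{DehyKeller08} giving $m_0m_1=0$ for rigid $L$ --- it identifies the two kernels with $m_0$ and $m_1$ by a genuinely different mechanism. The paper never leaves the single cluster-tilting object $T$: it rotates the exchange triangles, applies the $2$-Calabi-Yau duality and Lemma~\ref{lemma:functor-to-modules}, and lands on $\Hom_B(S_k,\cc(T,\Sigma^2 L))$ and $\Hom_B(\cc(T,\Sigma L),S_k)$, i.e.\ the multiplicities of $S_k$ in the socle of $\cc(T,\Sigma^2 L)$ and in the top of $\cc(T,\Sigma L)$. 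You instead read both kernels off the long exact sequences as the stable spaces $\ul{\cc}(L,T_k)$ and $\ul{\cc}(L,T_k^*)$ modulo $\add(T/T_k)$, identify the first with $[T^0:T_k]$ via the projective cover of $\cc(L,T)$, and handle the second by changing the cluster-tilting object to $T'=\mu_k(T)$ and invoking the index mutation rule of Theorem~\ref{thm:Dehy-Keller}~c) to obtain $m'=-m$. This costs you two verifications the paper does not need --- that $E'\to T_k^*$ is a right $\add(T/T_k)$-approximation of $T_k^*$ (true, since $\Ext^1(U,T_k)=0$ for $U\in\add(T/T_k)$ kills the composite with $T_k^*\to\Sigma T_k$) and that $Q_{T'}$ has no loop at $T_k^*$ (your argument via the one-dimensionality of $\Ext^1(T_k^*,T_k)$ is correct: a radical endomorphism $g$ satisfies $\eps'g=\lambda\eps'$, and $\lambda\neq 0$ would make $g$ invertible) --- but it buys a symmetric treatment of $T_k$ and $T_k^*$ and avoids the duality gymnastics. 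Note finally that what you prove, namely that the second kernel has dimension $\max(-m,0)$, is exactly what the paper's own proof establishes and what Corollary~\ref{cor:decat-F-polynomials} uses; the displayed equation's second case, read literally as $m=\dim\ker(\cdots)$ for $m\leq 0$, is off by a sign, so your formulation is the intended one.
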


\begin{proof} A triangle is {\em contractible} if it is a direct sum
of triangles one of whose terms is zero. A triangle is {\em minimal}
if it does not contain a contractible triangle as a direct factor.
Every triangle is the sum of a contractible and a minimal triangle.
In particular, we can choose the triangle
\[
L \to T^0_L \to T^1_L \to \Sigma L \ko
\]
of Lemma~\ref{lemma:Keller-Reiten},
where $T^0_L$ and $T^1_L$ lie in $\add(T)$, to be minimal. Let
$m_0$ and $m_1$ be the multiplicities of $T_k$ in these two
objects. We will show that $m_0$ and $m_1$ agree with the
two dimensions on the right hand side of equation~\ref{eq:kernel-dimensions}.
By definition, we have $m=m_0-m_1$.
Now by Proposition~2.1 of \cite{DehyKeller08}, the indecomposable
$T_k$ cannot occur in both $T^0_L$ and $T^1_L$ and so $m_0=0$ or $m_1=0$.
Clearly, this will imply both assertions. It remains to prove that
$m_0$ and $m_1$ equal the two dimensions. Let us first reinterpret $m_0$ and
$m_1$. Let $B$ be the endomorphism
algebra of $T$ and $S_k$ the simple top of the indecomposable projective
$B$-module $\cc(T,T_k)$. Then we see from Lemma~\ref{lemma:functor-to-modules}
combined with the $2$-Calabi-Yau property and the rigidity of $T$
that the multiplicity $m_0$ of $T_k$ in $T^0_L$ is the multiplicity of $S_k$
in the socle of the $B$-module $\cc(T,\Sigma^2 L)$ and the multiplicity $m_1$
of $T_k$ in $T^1_L$ is the multiplicity of $S_k$ in the top of
the $B$-module $\cc(T,\Sigma L)$. We have to show that these multiplicities
agree with the respective dimensions on the right hand side of
equation~\ref{eq:kernel-dimensions}. We first consider the kernel
of
\[
\cc(\Sigma L, \Sigma^2 T_k) \to \cc(\Sigma L, \Sigma^2 E').
\]
By Lemma~\ref{lemma:functor-to-modules}, it is isomorphic to the kernel
of
\[
\Hom_B(\cc(T,\Sigma L), \cc(T, \Sigma^2 T_k)) \to \Hom_B(\cc(T,\Sigma L), \cc(T, \Sigma^2 E'))
\]
and thus to the value of $\Hom_B(\cc(T,\Sigma L),?)$ on the kernel of
\[
\cc(T,\Sigma^2 T_k) \to \cc(T,\Sigma^2 E').
\]
Because of the triangle
\[
\Sigma E' \to \Sigma T^*_k \to \Sigma^2 T_k \to \Sigma^2 E'
\]
and the fact that $\Ext^1(T_k, T_k^*)$ is one-dimensional, we have an exact
sequence
\[
0 \to S_k \to \cc(T,\Sigma^2 T_k) \to \cc(T, \Sigma^2 E')
\]
and so the kernel of $\cc(L, \Sigma T_k) \to \cc(L, \Sigma E')$ is isomorphic
to the space
\[
\Hom_B(\cc(T, \Sigma L), S_k)\ko
\]
whose dimension clearly equals
the multiplicity of $S_k$ in the head of $\cc(T, \Sigma L)$. This is
what we wanted to show. Now we consider the kernel of
\[
\cc(L, \Sigma T_k^*) \to \cc(L, \Sigma E).
\]
By the $2$-Calabi-Yau property, it is isomorphic to the dual of the cokernel of
\[
\cc(E, \Sigma L) \to \cc(T_k^*, \Sigma L).
\]
By the triangle
\[
\Sigma^{-1} E \to \Sigma^{-1} T_k \to T_k^* \to E\ko
\]
this cokernel is isomorphic to the kernel of
\[
\cc(T_k, \Sigma^2 L) \to \cc(E, \Sigma^2 L).
\]
By Lemma~\ref{lemma:functor-to-modules}, this kernel is isomorphic
to the kernel of
\[
\Hom_B(\cc(T, T_k), \cc(T, \Sigma^2 L)) \to \Hom_B(\cc(T,E),\cc(T,\Sigma^2 L)).
\]
Because of the triangle
\[
E \to T_k \to \Sigma T_k^* \to \Sigma E \ko
\]
the rigidity of $T$ and the fact that $\Ext^1(T_k, T_k^*)$
is one-dimensional, we have an exact sequence
\[
\cc(T,E) \to \cc(T, T_k) \to S_k \to 0.
\]
So the above kernel is isomorphic to the space
\[
\Hom_B(S_k, \cc(T, \Sigma^2 L)) \ko
\]
whose dimension clearly equals the multiplicity of $S_k$
in the socle of $\cc(T, \Sigma^2 L)$. This is what we had to
show.
\end{proof}

\begin{example} \label{ex:F-polynomials} We continue
example~\ref{ex:trop-y-var}. We keep the initial cluster tilting
object $T=P_1\oplus P_2 \oplus P_3$ and wish to compute the
$F$-polynomials associated with the direct factors of
$T'=\mu_\boxten(T)= I_1 \oplus I_3 \oplus I_2$. For this,
we first compute
\[
\Ext^1_{\cc_A}(T, T'_i) = \Ext^1_{\cc_A}(T, \tau^{-1} T_i) =
\Hom_{\cc_A}(T, \Sigma \tau^{-1} T_i) = \Hom_{\cc_A}(T, T_i).
\]
By lemma~\ref{lemma:functor-to-modules}, the space $\Hom_{\cc_A}(T,T_i)$,
considered as a module over $\End_{\cc_A}(T) = A$,
is isomorphic to $P_i$. Hence the $F$-polynomials we are looking
for are the generating functions of the Euler characteristics
of the quiver Grassmannians of the modules $P_i$, or equivalently
of the associated representations of $Q^{op} : \xymatrix{2 \ar[r] & 1 & 3 \ar[l]}$.
Now $P_1$ is simple and so has exactly two submodules,
namely $0$ and $P_1$, which leads to
\[
F_{T'_1} = 1 + y_1.
\]
The module $P_2$ has exactly three submodules, namely $0$, $P_1$ and $P_2$,
which leads to
\[
F_{T'_3} = 1 + y_1 + y_1 y_2.
\]
Similarly, we find
\[
F_{T'_3} = 1+y_1 + y_1 y_3.
\]
A simple computation shows that these are indeed the $F$-polynomials
associated with the vertex $\mu_\boxten(t_0)$ of $\mathbb{T}_3$.
Thanks to Fomin-Zelevinsky's formula~\ref{eq:non-trop-y-var} expressing
the non tropical $Y$-variables at a vertex $t$ of $\mathbb{T}_n$
in terms of the tropical $Y$-variables, the $F$-polynomials and the quiver
at $t$, we obtain a categorical expression for the (non tropical) $Y$-variables
associated with $\mu_\boxten(t_0)$ by combining the results of
this example with those of examples~\ref{ex:alternating-A3-mutation} 
and \ref{ex:trop-y-var}.
\end{example}

\subsection{Consequence for the conjecture} \label{ss:reduction-to-no-loops-and-T-periodic}
Let $\Delta$ and $\Delta'$
be simply laced Dynkin diagrams with Coxeter numbers $h$ and $h'$.
Let $Q$ and $Q'$ be quivers with
underlying graphs $\Delta$ and $\Delta'$.
According to Lemma~\ref{lemma:reduction-to-y-boxten},
in order to show the periodicity conjecture for $(\Delta,\Delta')$,
it suffices to show that the restricted $Y$-pattern $\mathbf{y}_\boxten$
associated with $Q\boxten Q'$ and the sequence of mutations
\begin{equation} \label{eq:boxten-mutation-sequence}
\mu_\boxten=\mu_{+,-} \mu_{-,-} \mu_{+,+} \mu_{-,+}.
\end{equation}
is periodic of period dividing $h+h'$.
By section~\ref{ss:periodicity-from-that-of-tropical-Y-variables},
for this it suffices to show that, for each vertex $v$ of
$Q\boxten Q'$, the sequences $\eta_v(t_{pN})$ and $F_v(t_{pN})$
are periodic in $p$ of period dividing $h+h'$, where
the vertices $t_i$ of $\mathbb{T}_N$ are those visited
when going through an integer power of $\mu_\boxten$.

Now let $A$ be the algebra $kQ\ten kQ'$ and $\cc_A$ its associated
cluster category with canonical cluster tilting object $T$ as
recalled in section~\ref{ss:2-CY-realizations-of-triangle-products}.
By Corollary~\ref{cor:cluster-category-Hom-finite}, the endoquiver of $T$ is isomorphic to $Q\boxten Q'$
so that the category $\cc_A$ yields a $2$-Calabi-Yau realization
of $Q\boxten Q'$. In particular, it makes sense to consider
the sequence of cluster tilting objects
\[
\xymatrix{T=T(t_0) \ar@{-}[r] & T(t_1) \ar@{-}[r] & \ldots}
\]
associated with the vertices $t_i$.
If we can show that the endoquivers
of all the objects $T(t_i)$ do not have loops nor $2$-cycles,
it will follow from Corollaries~\ref{cor:decat-tropical-Y-variables}
and \ref{cor:decat-F-polynomials} that, for all vertices $v$
of $Q\boxten Q'$ and all $i$, we have
\[
\eta_v(t_i)=\prod y_j^{\ind^{op}_{T(t_i)}(T_v)} \quad\mbox{ and }\quad
F_v(t_i)= F_{T_v(t_i)}.
\]
To conclude that the sequences $\eta_v(t_{pN})$ and $F_v(t_{pN})$ are
periodic of period dividing $h+h'$, it will then suffice to show that the
sequence $T(t_i)$ is periodic of period dividing $(h+h')N$.
In section~\ref{s:mutations-of-products} below, we will show
that indeed, the endoquivers of
the $T(t_i)$ do not have loops or $2$-cycles and we will
describe the objects $T(t_{pN})=\mu_{\boxten}^p(T)$ using
the Zamolodchikov transformation of $\cc_A$. In
section~\ref{s:categorical-periodicity} below,
we will show that the sequence of the $T(t_i)$ is
periodic of period dividing $N(h+h')$ or, in other
words, that $\mu_\boxten^{h+h'}(T)\cong T$.

\section{Mutations of products}
\label{s:mutations-of-products}

\subsection{Constrained quivers with potential} \label{ss:constrained-quivers}
We want to study the effect of the mutations $\mu_\boxtimes$ of
section~\ref{ss:reformulation} on the cluster tilting object
of Corollary~\ref{cor:cluster-category-Hom-finite}. We will use the
description  of this category by a quiver with potential constructed
in Proposition~\ref{prop:Calab-Yau-completion-via-quiver-with-potential}.
For this we introduce a class of quivers with potential containing
the ones from Proposition~\ref{prop:Calab-Yau-completion-via-quiver-with-potential}.

Let $Q$ and $Q'$ be finite quivers without oriented cycles. To simplify
the notations, let us suppose that between any two vertices of $Q$ and $Q'$,
there is at most one arrow. Let
$Q_0$ and $Q'_0$ denote their sets of vertices. Let $R$ be a quiver
whose vertex set is the product $Q_0\times Q'_0$. An arrow
$\alpha: (i,i') \to (j,j')$ of $R$ is {\em horizontal} (respectively,
{\em vertical}) if $i'=j'$ (respectively, if $i=j$). It is {\em diagonal}
if it is neither horizontal nor vertical. The {\em non diagonal subquiver
of $R$} is the subquiver formed by all vertices and by all the non diagonal
arrows of $R$. The quiver $R$ is {\em $(Q,Q')$-constrained} if
\begin{itemize} \item[a)] its
non diagonal subquiver has the same underlying graph as $Q\ten Q'$
(as defined in section~\ref{ss:products-of-quivers}) and
\item[b)] for any pair of arrows $i\to j$ of $Q$ and $i'\to j'$ of $Q'$, the
full subquiver of $R$ with vertex set $\{i,i'\}\times \{j,j'\}$ is isomorphic
to
\begin{equation} \label{eq:standard-squares}
\xymatrix{ \circ \ar[r]^{\alpha'} & \circ \ar[ld]^\rho \\
\circ \ar[u]^\beta \ar[r]_\alpha & \circ \ar[u]_{\beta'}
}
\quad\mbox{ or }\quad
\xymatrix{ \circ \ar[r]^{\alpha'} & \circ \ar[d]^{\beta'} \\
\circ \ar[u]^\beta & \circ \ar[l]^{\alpha}
}
\end{equation}
in such a way that $\alpha$ and $\alpha'$ correspond to horizontal
arrows, $\beta$ and $\beta'$ to vertical arrows and $\rho$ to a
diagonal arrow.
\end{itemize}
We sometimes call the subquivers appearing in b) the {\em squares} of $R$.
For example the quivers $Q\square Q'$ and $Q\boxtimes Q'$ are
$(Q,Q')$-constrained and have the minimal, respectively maximal,
number of diagonal arrows. Notice that a $(Q,Q')$-constrained
quiver does not have loops nor $2$-cycles.

A quiver with potential $(R,W)$ is {\em $(Q,Q')$-constrained} if
$R$ is $(Q,Q')$-constrained and the potential $W$ is the sum
of non zero scalar multiples of all the cycles $\alpha'\beta\rho$ and $\alpha\rho\beta'$
appearing in a square as in the left diagram in (\ref{eq:standard-squares}) as well as
all the cycles $\alpha \beta' \alpha' \beta$ appearing in a square as
in the right diagram in (\ref{eq:standard-squares}). Recall that
changing the starting point in a cycle does not change
the superpotential.
For example, the quivers
with potential obtained from Proposition~\ref{prop:Calab-Yau-completion-via-quiver-with-potential}
are $(Q,Q')$-constrained.

Let $R$ be a $(Q,Q')$-constrained quiver. For a vertex $(i,i')$
of $R$, the {\em horizontal slice through $(i,i')$} is the full subquiver
$\hrz(R,i')$ formed by the vertices $(j,i')$, $j\in Q_0$, of $R$;
the {\em vertical slice $\vrt(R,i)$ through $(i,i')$} is defined
analogously. A vertex $(i,i')$ of $R$ is
a {\em source-sink} if it is a source in its horizontal slice
and a sink in its vertical slice and is not the source or
the target of any diagonal arrow. Analogously,
one defines {\em sink-sources}, \ldots\ .
% The source-sinks of the
% following quiver are the lower left corner, the central vertex and
% the upper right corner.
% \[
% \xymatrix{\circ \ar[d] & \circ \ar[d] \ar[dl] \ar[l] & \circ \ar[l] \\
% \circ \ar[d] & \circ \ar[l] \ar[r] & \circ  \ar[dl] \ar[u]\\
% \circ \ar[r] & \circ \ar[u]  \ar[r] & \circ \ar[u]
% }
% \]
Notice that two source-sinks are never linked by an arrow and
that if $(i,i')$ is a source-sink, each arrow with source
(respectively target) $(i,i')$ lies in the horizontal (respectively
the vertical) slice passing through $(i,i')$.

\begin{lemma} \label{lemma:mutation-Q-Qprime-constrained}
Let $R$ be a $(Q,Q')$-constrained quiver and
$(i,i')$ a source-sink of $R$.
\begin{itemize}
\item[a)] The mutated quiver $\mu_{(i,i')}(R)$
is still $(Q,Q')$-constrained. The horizontal slice of $\mu_{(i,i')}(R)$
passing through $(i,i')$ is the mutation $\mu_{(i,i')}(\hrz(R,i'))$ and
the vertical slice the mutation $\mu_{(i,i')}(\vrt(R,i))$.
\item[b)] If $(R,W)$ is a $(Q,Q')$-constrained quiver with
potential, then $\mu_{(i,i')}(R,W)$ is still $(Q,Q')$-constrained.
In particular, it does not have loops or $2$-cycles.
\end{itemize}
\end{lemma}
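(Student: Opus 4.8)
The plan is to use that both quiver mutation and quiver-with-potential mutation are \emph{local}: at the vertex $(i,i')$ they only alter the arrows incident to $(i,i')$ and the arrows running between its neighbours. By the remark preceding the statement, since $(i,i')$ is a source-sink, the arrows into $(i,i')$ are exactly the vertical arrows $(i,j')\to(i,i')$ with $j'$ a $Q'$-neighbour of $i'$, the arrows out of $(i,i')$ are exactly the horizontal arrows $(i,i')\to(j,i')$ with $j$ a $Q$-neighbour of $i$, and no diagonal arrow meets $(i,i')$. Consequently the only squares that contain $(i,i')$ are the squares $S_{j,j'}$ on $\{i,j\}\times\{i',j'\}$, and every square (with its potential terms) not containing $(i,i')$ is left unchanged by the mutation; it thus suffices to analyse the $S_{j,j'}$.

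For part a) I would run the three steps of quiver mutation of section~\ref{ss:quiver-mutation} at $(i,i')$. The paths through $(i,i')$ are precisely the vertical-in-then-horizontal-out paths $(i,j')\to(i,i')\to(j,i')$, so step~1 adds one diagonal arrow $(i,j')\to(j,i')$ in each $S_{j,j'}$; step~2 reverses the arrows at $(i,i')$, turning it into a sink-source; and step~3 cancels a $2$-cycle exactly in those $S_{j,j'}$ already carrying a diagonal $(j,i')\to(i,j')$, i.e.\ the type-$1$ squares. Tabulating the orientations then shows that mutation toggles the square type: each type-$1$ square becomes a standard type-$2$ square and vice versa. As only arrows at $(i,i')$ were reversed and every newly created arrow is diagonal, the underlying graph of the non-diagonal subquiver is unchanged, so $\mu_{(i,i')}(R)$ is again $(Q,Q')$-constrained. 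For the slice assertions I would observe that inside $\hrz(R,i')$ the vertex $(i,i')$ is a source, so its mutation just reverses the incident horizontal arrows — exactly the effect of the global mutation on horizontal arrows — and dually $(i,i')$ is a sink of $\vrt(R,i)$.

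For part b) I would push $(R,W)$ through the Derksen-Weyman-Zelevinsky rule of section~\ref{ss:reminder-quivers-with-potential}, square by square. Writing the new diagonal as $[\alpha'\beta]\colon(i,j')\to(j,i')$ and the reversed arrows as $\alpha'^{*},\beta^{*}$, the added potential is $\Delta=\sum [\alpha'\beta]\beta^{*}\alpha'^{*}$, while $[W]$ replaces each length-two path $\alpha'\beta$ through $(i,i')$ by $[\alpha'\beta]$. In a type-$2$ square the $4$-cycle $\nu\,\alpha\beta'\alpha'\beta$ becomes the $3$-cycle $\nu\,\alpha\beta'[\alpha'\beta]$; no $2$-cycle appears, and together with the matching summand of $\Delta$ this is exactly the pair of $3$-cycles of a type-$1$ square, with the nonzero coefficients $\nu$ and $1$. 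In a type-$1$ square the cycle $\lambda\,\alpha'\beta\rho$ turns into the $2$-cycle $\lambda\,[\alpha'\beta]\rho$, which I would remove by the change of variables sending $\rho$ to $\rho+\tfrac1\lambda\beta^{*}\alpha'^{*}$ and $[\alpha'\beta]$ to $[\alpha'\beta]+\tfrac{\mu}{\lambda}\beta'\alpha$; this absorbs both the $\Delta$-summand and the surviving cubic term $\mu\,\alpha\rho\beta'$ into the trivial $2$-cycle and leaves the single reduced term $-\tfrac{\mu}{\lambda}\,\alpha\beta^{*}\alpha'^{*}\beta'$, precisely the $4$-cycle of the mutated type-$2$ square, again with nonzero coefficient.

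The hard part will be this reduction. Although it localizes — each new arrow $[\alpha'\beta]$ and each old diagonal $\rho$ belongs to a single $S_{j,j'}$, so the $2$-cycles to be cancelled are supported on pairwise disjoint pairs of arrows even though the reversed arrows $\alpha'^{*},\beta^{*}$ are shared by several squares — one has to check that the change of variables creates no new short cycles, causes no interference between squares, and keeps every coefficient nonzero. Once this bookkeeping is done, the reduced quiver with potential is manifestly $(Q,Q')$-constrained, which in particular shows that no loops or $2$-cycles survive and establishes the final clause.
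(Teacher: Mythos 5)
Your proposal is correct and follows essentially the same route as the paper: mutation is analysed square by square at the source-sink, toggling the two square types, and the only delicate point — eliminating the $2$-cycle $[\alpha'\beta]\rho$ in the type-$1$ squares — is resolved exactly as in the paper, by a change of variables on $\rho$ and $[\alpha'\beta]$ justified by the observation that these two arrows occur in no other term of the potential. The "bookkeeping" you flag as the hard part is in fact already complete once that observation is made, which is precisely how the paper closes the argument.
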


\begin{proof} a) The mutation at $(i,i')$ reverses the arrows
passing through $(i,i')$ and does not create any diagonal
arrows incident with $(i,i')$. Thus, $(i,i')$ is a sink-source
in the mutated quiver and the second assertion is clear.
If $(i,i')$ belongs to a square (where it has to be
the vertex $\bt$)
\begin{equation}
\xymatrix{ \bt \ar[r]^{\alpha'} & \circ \ar[ld]^\rho \\
\circ \ar[u]^\beta \ar[r]_\alpha & \circ \ar[u]_{\beta'}
}
\quad\mbox{ or }\quad
\xymatrix{ \bt \ar[r]^{\alpha'} & \circ \ar[d]^{\beta'} \\
\circ \ar[u]^\beta & \circ \ar[l]^{\alpha}
}
\end{equation}
as in \ref{eq:standard-squares} then it transforms it into the
other type of square; it leaves all squares not containing $(i,i')$
unchanged. This shows the first assertion.

b) In the non reduced mutated quiver $\tilde{\mu}_{(i,i')}(R,W)$
the squares of the first type containing $(i,i')$ are modified as follows
\begin{equation}
\xymatrix@R=1.5cm@C=1.5cm{ \bt \ar[d]_{\beta^*} & \circ \ar[l]_-{\alpha'^*} \ar@/_/[dl]_\rho \\
\circ \ar[r]_\alpha \ar@/_/[ur]_r & \circ \ar[u]_{\beta'} }
\end{equation}
where $r=[\alpha'\beta]$. Their contribution to the potential
is changed from $c_1 \alpha' \beta \rho + c_2 \beta' \alpha\rho$,
where $c_1$ and $c_2$ are non zero scalars, to
\[
c_1 r\rho + r \beta^* \alpha'^* + c_2 \beta' \alpha \rho
=(c_1 r + c_2 \beta'\alpha)(\rho+c_1^{-1} \beta^* \alpha'^*) - c_2 c_1^{-1} \beta' \alpha\beta^* \alpha'^*.
\]
Notice that $r$ and $\rho$ do not appear in any other terms of
the potential. Therefore, after reduction, the square becomes
\[
\xymatrix{ \bt \ar[r]^{\alpha'} & \circ \ar[d]^{\beta'} \\
\circ \ar[u]^\beta & \circ \ar[l]^{\alpha}
}
\]
and its contribution to the potential becomes $-c_2 c_1^{-1} \beta' \alpha\beta^* \alpha'^*$.
For the squares of the second type containing $(i,i')$, mutation changes
them into
\[
\xymatrix{\bt \ar[d]_{\beta^*} & \circ \ar[l]_{\alpha'^*} \ar[d]^{\beta'} \\
\circ \ar[ru]_r & \circ \ar[l]^\alpha }
\]
where $r=[\alpha'\beta]$,
and their contribution to the potential changes from $c \alpha' \beta' \alpha \beta'$,
where $c$ is a non zero scalar, to
\[
c r \alpha\beta' + r \beta^* \alpha'^* .
\]
So we see that the effect of mutation is to exchange the two types
of squares. The assertion follows.
\end{proof}

\subsection{The Zamolodchikov transformation} \label{ss:Zamolodchikov-transformation}
Let $Q$ be a finite quiver without oriented cycles. We order the vertices of $Q$
such that $i\leq j$ iff there is a path (of length $\geq 0$) leading
from $i$ to $j$. A {\em source sequence of $Q$} is an
enumeration $i_1$, $i_2$, \ldots, $i_n$ of the vertices of $Q$
which is non-decreasing with respect to $\leq$,
i.~e. if $1\leq s\leq t\leq n$, then we have $i_s\leq i_t$
for the order on the vertices that we have just defined.

Let $kQ$ be the path algebra of $Q$ and $\mod kQ$ the category
of finite-dimensional right $kQ$-modules. Let $\cd^b(kQ)$ its
bounded derived category and denote by $\tau=\Sigma^{-1}S$
its Auslander-Reiten functor. As explained in \cite{Happel87},
for each vertex $i$ of $Q$, we have a canonical Auslander-Reiten triangle
\begin{equation} \label{eq:AR-triangles}
\xymatrix{P_i \ar[r] &
(\bigoplus_{i\to j} P_j) \oplus (\bigoplus_{j\to i} \tau^{-1} P_j) \ar[r] &
\tau^{-1} P_i \ar[r] &\Sigma P_i}
\end{equation}

Now let $\cc_Q$ be the cluster category of $Q$, \cf section~\ref{ss:2-CY-realizations}.
It is a $2$-Calabi-Yau category and the image $T$ of the free
module $kQ$ is a cluster tilting object in $\cc_Q$.

\begin{lemma} \label{lemma:tau-for-kQ} Let $i_1, \ldots, i_n$
be a source sequence of $Q$. For each $1\leq j\leq n$, the mutated
cluster tilting object
\[
\mu_{i_j} \mu_{i_{j-1}} \ldots \mu_{i_1}(T)
\]
is the direct sum of the objects $\tau^{-1} P_{i_r}$, $1\leq r\leq j$, and
the objects $P_{i_s}$, $j<s\leq n$. In particular, for $j=n$,the
mutated object cluster tilting object
\[
\mu_{i_n} \mu_{i_{n-1}} \ldots \mu_{i_1}(T)
\]
is isomorphic to $\tau^{-1} T$.
\end{lemma}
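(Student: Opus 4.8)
\noindent
The plan is to prove the statement by induction on $j$, writing $T_j=\mu_{i_j}\cdots\mu_{i_1}(T)$ and tracking the numbering of the indecomposable summands provided by Theorem~\ref{thm:2-CY-mutation}. For $j=0$ the object is $T=kQ=\bigoplus_i P_i$, as required. Assume $T_{j-1}=\bigoplus_{r<j}\tau^{-1}P_{i_r}\oplus\bigoplus_{s\ge j}P_{i_s}$. The summand to be mutated sits at the vertex $i_j$ and equals $P_{i_j}$. The natural candidate for the exchange is the Auslander--Reiten triangle~\ref{eq:AR-triangles} of $\cd^b(kQ)$ at $i_j$, namely $P_{i_j}\to M\to\tau^{-1}P_{i_j}\to\Sigma P_{i_j}$ with $M=(\bigoplus_{i_j\to \ell}P_\ell)\oplus(\bigoplus_{\ell\to i_j}\tau^{-1}P_\ell)$. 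Here the source-sequence hypothesis enters: an arrow $i_j\to \ell$ forces $\ell>i_j$ in the path order, so $\ell$ occurs after $i_j$ in the sequence and $P_\ell$ is still an (un-mutated) summand of $T_{j-1}$; an arrow $\ell\to i_j$ forces $\ell<i_j$, so $\ell$ occurs earlier and its summand has already become $\tau^{-1}P_\ell$. As there are no loops, $M$ contains no copy of $P_{i_j}$, so $M\in\add(T_{j-1}/P_{i_j})$.

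\noindent
Next I would show that the image of this triangle in the orbit category $\cc_Q$ is exactly the exchange triangle of Lemma~\ref{lemma:exchange-triangles} for mutation at $P_{i_j}$. It is non-split, since otherwise the indecomposable $\tau^{-1}P_{i_j}$ would be a direct summand of $M$, which is impossible. Hence by Theorem~\ref{thm:2-CY-mutation} and Lemma~\ref{lemma:exchange-triangles} it suffices to check that $P_{i_j}\to M$ is a left $\add(\bar T_{j-1})$-approximation, where $\bar T_{j-1}=T_{j-1}/P_{i_j}$. Applying $\cc_Q(-,N)$ to the triangle for $N\in\add(\bar T_{j-1})$, the map $\cc_Q(M,N)\to\cc_Q(P_{i_j},N)$ is surjective as soon as $\Ext^1_{\cc_Q}(\tau^{-1}P_{i_j},N)=0$. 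For $N=\tau^{-1}P_{i_r}$ with $r<j$ this reads $\Ext^1(P_{i_j},P_{i_r})=0$, which holds since $T$ is rigid.

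\noindent
The remaining case $N=P_{i_s}$ with $s>j$ is the main obstacle. Using the $2$-Calabi--Yau property (C2) one has $\Ext^1_{\cc_Q}(\tau^{-1}P_{i_j},P_{i_s})\cong D\,\Ext^1_{\cc_Q}(P_{i_s},\tau^{-1}P_{i_j})$, and since $\cc_Q=\cd^b(kQ)/(\Sigma^2 S^{-1})^{\Z}$ the functor $\Sigma^2 S^{-1}=\Sigma\tau^{-1}$ is isomorphic to the identity, whence
\[
\Ext^1_{\cc_Q}(P_{i_s},\tau^{-1}P_{i_j})=\cc_Q(P_{i_s},\Sigma\tau^{-1}P_{i_j})\cong\cc_Q(P_{i_s},P_{i_j}).
\]
Because $kQ$ is hereditary and the $P$'s are projective, only the term $p=0$ of the orbit sum survives, so this equals $\Hom_{kQ}(P_{i_s},P_{i_j})$, the space of paths from $i_s$ to $i_j$. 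A path $i_s\to i_j$ would give $i_s\le i_j$ and hence $s\le j$ in the source sequence, contradicting $s>j$; thus the space vanishes. This establishes the approximation property, so $\mu_{i_j}(T_{j-1})$ is obtained from $T_{j-1}$ by replacing $P_{i_j}$ with the cone $\tau^{-1}P_{i_j}$, giving $T_j=\bigoplus_{r\le j}\tau^{-1}P_{i_r}\oplus\bigoplus_{s>j}P_{i_s}$ and completing the induction. For $j=n$ this yields $T_n=\bigoplus_i\tau^{-1}P_i\cong\tau^{-1}T$, which is the final assertion; note that this object is automatically cluster-tilting, being the image of $T$ under the autoequivalence $\tau^{-1}$.
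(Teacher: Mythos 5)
Your proof is correct and follows essentially the same route as the paper's (very terse) argument: induction on $j$, identifying the image in $\cc_Q$ of the Auslander--Reiten triangle at $i_j$ with the Iyama--Yoshino exchange triangle, the source-sequence hypothesis guaranteeing that the middle term lies in $\add$ of the remaining summands. You merely supply the details the paper leaves implicit, in particular the verification of the left-approximation property via the vanishing of $\cc_Q(P_{i_s},P_{i_j})$ for $s>j$, which is a sound way to make the paper's ``compare the two triangles'' step rigorous.
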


\begin{proof} This is an easy induction: For $j=0$, there is
nothing to show. For $j>0$, we compare
the above triangle~\ref{eq:AR-triangles} with the
exchange triangles~\ref{eq:exchange-triangles} to see
that $\mu_j$ replaces the summand $P_j$ by $\tau^{-1}P_j$.
\end{proof}

Now let $Q'$ be another finite quiver without oriented cycles.
For simplicity, we assume that in both, $Q$ and $Q'$, there is at
most one arrow between any two given vertices. Let $A$ be
the finite-dimensional algebra $kQ\ten_k kQ'$. We denote
by $\mod(A)$ its category of $k$-finite-dimensional right $A$-modules,
by $\cd^b(A)$ its bounded derived category and by $\cc_A$ the
associated cluster category as recalled in section~\ref{ss:2-CY-realizations-of-triangle-products}.
We define $\tau\ten\id$ to be the autoequivalence of $\cd^b(A)$
given by left derived tensor product with the bimodule
\[
(\Sigma^{-1} D(kQ))\ten_k kQ'
\]
over $A=kQ\ten_k kQ'$ and we define $\tau^{-1}\ten \id$ to be
its quasi-inverse. By Proposition~4.1 of \cite{Keller11b},
this functor induces an autoequivalence $\tilde{\Za}$ of
the perfect derived category $\per(\Pi_3(A))$ which preserves
the finite-dimensional derived category.
The {\em Zamolodchikov transformation}
is the induced autoequivalence $\Za:\cc_A \to \cc_A$ of the cluster
category. By abuse of notation, we still write $\Za=\tau^{-1}\ten \id$.
If the underlying graph of the quiver $Q'$ is the Dynkin diagram $A_1$,
the Zamolodchikov transformation coincides with the inverse Auslander-Reiten
translation $\tau^{-1}$.

\subsection{The Zamolodchikov transformation as a composition of mutations}
\label{ss:Zamolodchikov-composition-of-mutations} As in
section~\ref{ss:Zamolodchikov-transformation}, let $Q$ and
$Q'$ be finite quivers without oriented cycles which, for
simplicity of notation, are assumed to have at most one arrow
between any two vertices.
Let us order the vertices of $Q\boxtimes Q'$ such that
$(i,i')\leq (j,j')$ iff $i\leq j$ and $i'\geq j'$ (sic!).
Let $v_1, \ldots, v_{N}$ be a non decreasing enumeration
of the vertices of $Q\boxtimes Q'$. For each $1\leq j\leq N$,
we put
\[
(R(j), W(j)) = \mu_{v_j} \mu_{v_{j-1}} \ldots \mu_{v_1}(Q\boxtimes Q', W)
\]
where $W$ is the potential constructed in
Proposition~\ref{prop:Calab-Yau-completion-via-quiver-with-potential}.

\begin{lemma} \label{lemma:composed-mutation-of-product}
For each $0\leq j\leq N$, we have
\begin{itemize}
\item[a)] $v_{j+1}$ is a source-sink of $R(j)$;
\item[b)] $(R(j),W(j))$ is $(Q,Q')$-constrained, and so $R(j)$
does not have loops nor $2$-cycles;
\item[c)] for each vertex $i'$ of $Q'$, the
quiver $\hrz(R(j),i')$ is isomorphic to $\mu_{i_s} \mu_{i_{s-1}}\ldots \mu_{i_1}(Q)$,
where $(i_1, i')$, \ldots, $(i_s, i')$ is the subsequence of the
vertices of the form $(x,i')$, $x\in Q_0$, among the sequence
$v_1$, \ldots, $v_j$;
\item[d)] for each $i\in Q_0$, the quiver $\vrt(R(j),i)$ is isomorphic
to $\mu_{i_s} \mu_{i_{s-1}}\ldots \mu_{i_1}(Q')$,
where $(i, i_1)$, \ldots, $(i, i_s)$ is the subsequence of the
vertices of the form $(i,y)$, $y\in Q'_0$, among the sequence
$v_1$, \ldots, $v_j$;
\item[e)] The object
\[
\mu_{v_j} \mu_{v_{j-1}} \ldots \mu_{v_1}(T)
\]
is the direct sum of the objects $(\tau^{-1} P_i)\ten P_{i'}$
where $(i,i')$ is among the $v_s$, $1\leq s\leq j$, and of
the $P_i\ten P_{i'}$, where $(i,i')$ is not among the $v_s$, $1\leq s\leq j$.
\end{itemize}
\end{lemma}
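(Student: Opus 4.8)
The plan is to prove the five assertions (a)--(e) simultaneously by induction on $j$, driving the inductive step with Lemma~\ref{lemma:mutation-Q-Qprime-constrained} on the quiver-with-potential side and with a comparison between the exchange triangles of Lemma~\ref{lemma:exchange-triangles} and the Auslander--Reiten triangles~\ref{eq:AR-triangles} tensored with $P_{i'}$ on the object side. For $j=0$ we have $(R(0),W(0))=(Q\boxtimes Q',W)$, which is $(Q,Q')$-constrained by the example following the definition; its horizontal and vertical slices are $Q$ and $Q'$ by the definition of the triangle product, and the object is $T=\bigoplus_{(i,i')}P_i\ten P_{i'}$, so (b)--(e) hold with no mutation yet performed. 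The only substantive point in the base case is (a): the $\leq$-minimal vertex $v_1$ has a source of $Q$ as first coordinate and a sink of $Q'$ as second coordinate, hence is a source in its horizontal slice and a sink in its vertical slice; that this already forces the absence of incident diagonal arrows is checked below.

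Before the step I would record the order-theoretic observation that makes everything fit. Since $v_1,\dots,v_N$ is a linear extension of the given partial order and, for a fixed second coordinate $i'$, the induced order on the vertices $(x,i')$ is exactly the order of $Q$, the first coordinates of the vertices $(x,i')$ occurring among $v_1,\dots,v_{j+1}$ form an initial segment of a source sequence of $Q$; dually, for a fixed first coordinate $i$ the second coordinates form an initial segment of a sink sequence of $Q'$, because the partial order reverses the order of $Q'$ in the second variable. Granting (a) and (b) at stage $j$, the vertex $v_{j+1}$ is a source-sink of the $(Q,Q')$-constrained quiver $R(j)$, so Lemma~\ref{lemma:mutation-Q-Qprime-constrained} applies: $(R(j+1),W(j+1))=\mu_{v_{j+1}}(R(j),W(j))$ is again $(Q,Q')$-constrained, which gives (b) and in particular the absence of loops and $2$-cycles, and the mutation replaces the horizontal and vertical slices through $v_{j+1}$ by their mutations while leaving the others untouched. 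Together with the order-theoretic observation this yields (c) and (d) at stage $j+1$.

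For (e) I would mutate the summand of the current cluster-tilting object indexed by $v_{j+1}=(i,i')$, which by (e) at stage $j$ equals $P_i\ten P_{i'}$. Because $v_{j+1}$ is a source-sink, its outgoing arrows in $R(j)$ are exactly the horizontal ones; by the argument already used for a single quiver in the proof of Lemma~\ref{lemma:tau-for-kQ}, the out-neighbours of $i$ in the current horizontal slice split into the $P_c$ with $i\to c$ in $Q$ and the $\tau^{-1}P_c$ with $c\to i$ in $Q$. Hence the middle term $E'$ of the exchange triangle of Lemma~\ref{lemma:exchange-triangles} equals $M\ten P_{i'}$, where $M$ is the middle of the Auslander--Reiten triangle~\ref{eq:AR-triangles} for $P_i$. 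Tensoring that triangle with the flat $k$-module $P_{i'}$ gives a triangle in $\cd^b(A)$, and so in $\cc_A$, of the form $P_i\ten P_{i'}\to M\ten P_{i'}\to(\tau^{-1}P_i)\ten P_{i'}\to\Sigma(P_i\ten P_{i'})$; its first two terms agree with the exchange triangle, so the uniqueness in Theorem~\ref{thm:2-CY-mutation} identifies the mutated summand as $(\tau^{-1}P_i)\ten P_{i'}=(\tau^{-1}\ten\id)(P_i\ten P_{i'})$, which is (e) at stage $j+1$.

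It then remains to establish (a) at stage $j+1$, namely that $v_{j+2}=(a,b)$ is a source-sink of $R(j+1)$. Using (c) and (d) at stage $j+1$ with the order-theoretic observation, the first coordinate $a$ continues the source sequence realized in $\hrz(R(j+1),b)$, so the source-sequence argument of Lemma~\ref{lemma:tau-for-kQ} shows $a$ is a source there, and dually $b$ is a sink of $\vrt(R(j+1),a)$. The absence of incident diagonal arrows is then automatic: in a $(Q,Q')$-constrained quiver the only vertices meeting a diagonal arrow are the top-right and bottom-left corners of a square of the first type in~\ref{eq:standard-squares}, and inspection shows that such a corner is a source in both of its slices or a sink in both of them, hence can never be simultaneously a source in its horizontal slice and a sink in its vertical slice. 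Thus $v_{j+2}$ is a source-sink and the induction closes. I expect the delicate point to be the bookkeeping in (e): one must verify that in the current mutated horizontal slice the neighbours of the vertex being mutated split into already-mutated and not-yet-mutated summands exactly as in the Auslander--Reiten triangle, which is precisely what makes the identification $E'\cong M\ten P_{i'}$ hold on the nose.
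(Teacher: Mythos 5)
Your proof is correct and follows essentially the same route as the paper's: a simultaneous induction on $j$ in which Lemma~\ref{lemma:mutation-Q-Qprime-constrained} drives parts a)--d) via the source/sink-sequence bookkeeping on the slices, and part e) is obtained by tensoring the Auslander--Reiten triangle~\ref{eq:AR-triangles} with $P_{i'}$ and matching it against the exchange triangles. Your only real addition is the explicit check that a vertex which is a source of its horizontal slice and a sink of its vertical slice cannot be incident with a diagonal arrow of a $(Q,Q')$-constrained quiver -- a point the paper's proof of a) leaves implicit -- and that check is sound.
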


\begin{proof} We prove a)-d) simultaneously by induction on $j$.
For $j=0$, all the assertions
are clear. Assume the statement hold up to $j-1$. Then by a)$_{j-1}$, the
vertex $v_j$ is a source-sink of $R(j-1)$ and $(R(j-1), W(j-1))$ is $(Q,Q')$-constrained
by b)$_{j-1}$. So $(R(j), W(j))$ is still $(Q,Q')$-constrained by
Lemma~\ref{lemma:mutation-Q-Qprime-constrained}. So we have proved
b)$_j$. To prove c)$_j$, let $i'$ be a vertex of $Q'$. If $i'$
is not the second component of $v_j$, the sequence $i_1$, \ldots, $i_s$
remains unchanged and so does the subquiver $\hrz(R(j),i')$ by
Lemma~\ref{lemma:mutation-Q-Qprime-constrained}. If $i'$ is the second
component of $v_j$, then the sequence $i_1$, \ldots, $i_s$ is extended
by adding $i_{s+1}\geq i_s$ and the claim of c)$_j$ still follows from
Lemma~\ref{lemma:mutation-Q-Qprime-constrained}. Similarly, one proves d).
Finally, we have to show a)$_j$. Indeed, the vertex $v_{j+1}$ is of
the form $(i,i')$, and the first components $i_1$, \ldots, $i_s$, $i_{s+1}$
of the $v_1$, \ldots, $v_{j+1}$
which are of the form $(x,i')$, $x\in Q_0$,
form a source sequence of $\hrz(R(0),i')$. So $i$ is a source of
$\hrz(R(j), i')=\mu_{i_s} \ldots \mu_{i_1}(\hrz(R(0),i'))$. Similarly,
one sees that $(i,i')$ is a sink of $\vrt(R(j), i)$.

Now let us prove e) by induction on $j$. For $j=0$, there is nothing
to prove. Assume the assertion holds up to $j-1$. By a)$_{j-1}$, the
vertex $v_j=(i,i')$ is a source-sink of $R(j-1)$. So in particular,
the vertex $v_j$ is a source of $\hrz(R(j-1),i')$.
% and that all arrows
% with source $(i,i')$ of $\hrz(R(j-1),i')$ have their target in
% $\hrz(R(j-1),i')$ (otherwise, the sequence $v_1$, \ldots $v_N$
% cannot exhaust the vertices with second component $i'$).
By the induction hypothesis, the direct summands of
\[
\mu_{v_{j-1}} \ldots \mu_{v_1}(T)
\]
associated with the vertices of $\hrz(R(j-1),i')$ are the
\[
\tau^{-1}P_{i_u} \ten P_{i'}
\]
for $1\leq u<s$ and the $P_j\ten P_{i'}$ for $j$ not among the
$i_u$, $1\leq u<s$. Now
in $\cd^b(kQ)$, we have the sequence
\begin{equation}
\xymatrix{P_i \ar[r] &
(\bigoplus_{i\to j} P_j) \oplus (\bigoplus_{j\to i} \tau^{-1} P_j) \ar[r] &
\tau^{-1} P_i \ar[r] &\Sigma P_i}
\end{equation}
recalled in \ref{eq:AR-triangles}. By tensoring the sequence with
$P_{i'}$ over $k$ and taking the image in $\cc_A$, we get the
exchange triangle, which shows that the mutation at $v_j=(i,i')$
replaces the summand $P_i\ten P_{i'}$ with $(\tau^{-1}P_i)\ten P_{i'}$.
\end{proof}

\begin{corollary} Let $v_1$, \ldots, $v_N$ be a non decreasing enumeration
of the vertices of $Q\boxtimes Q'$ (for the order where $(i,i')\leq (j,j')$
iff there is a path from $i$ to $j$ and a path from $j'$ to $i'$!).
Let $\mu_v$ be the composed mutation
$
\mu_{v_N} \mu_{v_{N-1}} \ldots \mu_{v_1}.
$
For each vertex $(i,i')$ of $Q\boxtimes Q'$,
let $T_{i,i'}$ be the indecomposable summand $P_i\ten P_{i'}$ of
the canonical cluster tilting object $T$ of the cluster category $\cc_A$.
\begin{itemize}
\item[a)] For any vertex $t$ of $\mathbb{T}_n$ visited when performing an
integer power of $\mu_v$, the endoquiver of $T(t)$
does not have loops nor $2$-cycles.
\item[b)] For each $p\geq 0$ and all $(i,i')$, we have an isomorphism in the cluster
category
\[
\Za^p(T_{i,i'}) \cong (\mu_\boxten^p(T))_{i,i'}.
\]
\end{itemize}
\end{corollary}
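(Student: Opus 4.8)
The plan is to obtain both assertions from Lemma~\ref{lemma:composed-mutation-of-product}, applied over a single period (its case $j=N$ for the object, and all $0\le j\le N$ for the intermediate quivers), together with the fact that the Zamolodchikov transformation $\Za=\tau^{-1}\ten\id$ is an autoequivalence of the triangulated category $\cc_A$. The two parts are naturally proved by one simultaneous induction on the power $p$, in which the one-period picture of Lemma~\ref{lemma:composed-mutation-of-product} is transported to the $p$-th period by the autoequivalence $\Za^{p}$.

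For the base of the induction I would record what happens during a single application of $\mu_v$. By Lemma~\ref{lemma:composed-mutation-of-product}(e) with $j=N$,
\[
\mu_v(T)=\bigoplus_{(i,i')}(\tau^{-1}P_i)\ten P_{i'}=(\tau^{-1}\ten\id)(T)=\Za(T),
\]
and the canonical numbering of summands identifies the factor indexed by $(i,i')$ with $(\tau^{-1}P_i)\ten P_{i'}=\Za(T_{i,i'})$, so that $(\mu_v(T))_{i,i'}\cong\Za(T_{i,i'})$. At the same time, Lemma~\ref{lemma:composed-mutation-of-product}(b) shows that the intermediate endoquivers $R(0),\ldots,R(N)$ traversed in this period are all $(Q,Q')$-constrained, hence do not have loops nor $2$-cycles. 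In particular $R(N)$, the endoquiver of $\Za(T)$, is isomorphic to $Q\boxten Q'$, so after one period the endoquiver and its vertex labelling are restored.

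The inductive step is where the autoequivalence does the work. Iyama--Yoshino mutation (Theorem~\ref{thm:2-CY-mutation}) is defined purely through triangles and minimal approximations, so it is preserved by the triangle equivalence $\Za^{p}$; moreover $\Za^{p}$ sends $T$ to $\Za^{p}(T)$, whose endoquiver is again $Q\boxten Q'$ with summand $(i,i')$ equal to $\Za^{p}(T_{i,i'})$. Hence the enumeration $v$ is still a non-decreasing source sequence for $\Za^{p}(T)$, applying $\mu_{v_1},\ldots,\mu_{v_N}$ to $\Za^{p}(T)$ is the $\Za^{p}$-image of applying it to $T$, the intermediate endoquivers of the $p$-th period are the $\Za^{p}(R(j))$ (each isomorphic as a quiver to $R(j)$, hence free of loops and $2$-cycles), and the endpoint is $\Za^{p}(\mu_v(T))=\Za^{p+1}(T)$ with $(\mu_v^{p+1}(T))_{i,i'}\cong\Za^{p+1}(T_{i,i'})$. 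This yields (a) and (b) for $p\ge 0$; the negative powers follow identically from $\mu_v^{-1}(T)=\Za^{-1}(T)$, and the passage from $\mu_v$ to the sequence $\mu_\boxten$ of equation~\ref{eq:inverse-Zamolodchikov-mutation} is the reindexing already set up in section~\ref{ss:reformulation}.

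The step I expect to be the main obstacle is precisely this last bookkeeping: checking that $\Za$ acts on the indecomposable summands compatibly with the canonical numbering, so that the single vertex sequence $v$ remains a valid non-decreasing source sequence at every stage $\Za^{p}(T)$, and that the reindexing relating $\mu_v$ and $\mu_\boxten$ is consistent with the powers of $\Za$ recorded in (b). Both reduce to the fact that $\Za$ is a triangle autoequivalence fixing the endoquiver $Q\boxten Q'$ of $T$ up to the identity on vertices, which is exactly the content of the one-period computation in Lemma~\ref{lemma:composed-mutation-of-product}; granting it, (a) and (b) become formal and require no further input, such as the quiver-Grassmannian geometry used elsewhere.
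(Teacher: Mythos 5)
Your proposal is correct and follows essentially the same route as the paper: part a) is read off from part b) of Lemma~\ref{lemma:composed-mutation-of-product} (the $(Q,Q')$-constrained property of all intermediate quivers with potential), and part b) is the same induction on $p$, with the base case $p=1$ given by part e) of that lemma and the inductive step carried by the fact that $\Za$ is an autoequivalence commuting with Iyama--Yoshino mutation. The only cosmetic difference is that you phrase the iteration as transporting the whole one-period picture by $\Za^{p}$, whereas the paper commutes $\Za$ past $\mu_v^{p}$ directly; these are the same argument.
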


\begin{proof} a)
This follows from part b) of Lemma~\ref{lemma:composed-mutation-of-product}
and the fact that the endoquiver of a mutation
of a cluster tilting object is the quiver appearing in the mutated
quiver with potential as recalled at the end of
section~\ref{ss:reminder-quivers-with-potential}.

b) We proceed by induction on $p$. For $p=0$, there is nothing
to show. For $p=1$, the assertion follows from part e) of Lemma~\ref{lemma:composed-mutation-of-product}.
Now suppose that $p\geq 1$ and that we have an isomorphism
\[
\Za^p(T_{i,i'}) \cong (\mu_{v}^p(T))_{i,i'}.
\]
Since $\Za$ is an autoequivalence of $\cc_A$, it follows that
\[
\Za^{p+1}(T_{i,i'}) \cong \Za ((\mu_{v}^p(T))_{i,i'}) \cong (\mu_v^p(\Za(T)))_{i,i'}.
\]
Thanks to the case where $p=1$, we conclude that
\[
\Za^{p+1}(T_{i,i'})\cong (\mu_v^p(\mu_v(T)))_{i,i'} \cong (\mu_v^{p+1}(T))_{i,i'}.
\]
\end{proof}

\section{Categorical periodicity}
\label{s:categorical-periodicity}

\subsection{Categorification of the Coxeter element} Let $\Delta$
be a simply laced Dynkin diagram with vertices $1$, \ldots, $n$.
Let $\alpha_i$ be the simple root associated
with the vertex $i$, $1\leq i\leq n$. Let $h$ be the Coxeter
number of $\Delta$.
Let $Q$ be a quiver with underlying
graph $\Delta$. Let $k$ be a field and $\cd^b(kQ)$ the bounded
derived category of finite-dimensional right modules over
the path algebra $kQ$. Then $\cd^b(kQ)$ is a $\Hom$-finite
triangulated category. We write $\Sigma$ for its suspension
functor, $S$ for its Serre functor and $\tau=\Sigma^{-1} S$
for its Auslander-Reiten translation. For each $1\leq i\leq n$, let
$S_i$ be the simple module associated with the vertex $i$.

\begin{theorem}[Gabriel, Happel] \label{thm:Gabriel-Happel}
\begin{itemize}
\item[a)]
There is a canonical isomorphism from the Gro\-then\-dieck group
$K_0(\cd^b(kQ))$ of the triangulated
category $\cd^b(kQ)$ to the root lattice of $\Delta$ which
takes the class of the simple module $[S_i]$ to the
simple root $\alpha_i$.
\item[b)] Under this isomorphism, the (positive and negative)
roots correspond precisely
to the classes in $K_0(\cd^b(kQ))$ of the indecomposable objects of $\cd^b(kQ)$.
\item[c)] The automorphism of the Grothendieck group of the derived category
induced by $\tau^{-1}$ corresponds to the action of a Coxeter element $c$ on the
root lattice. The identity $c^h=\id$ lifts to an isomorphism
\[
\tau^{-h} \iso \Sigma^2
\]
of $k$-linear functors $\cd^b(kQ)\to \cd^b(kQ)$.
\end{itemize}
\end{theorem}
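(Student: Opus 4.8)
The plan is to assemble Gabriel's classification of indecomposables with Happel's structure theory for the bounded derived category of a hereditary Dynkin algebra: parts (a) and (b) amount to translating these classical results into the present language, while the genuine work is to upgrade the numerical identity $c^h=\id$ to a functorial isomorphism in (c).

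For (a), I would first invoke the standard fact that the Euler characteristic $M \mapsto \sum_i (-1)^i [H^i(M)]$ induces an isomorphism $K_0(\cd^b(kQ)) \iso K_0(\mod kQ)$, the latter being free abelian on the classes $[S_i]$ of the simple modules. Sending $[S_i]$ to $\alpha_i$ thus defines a group isomorphism onto the root lattice. To see that it is canonical, that is, compatible with the bilinear forms, I would compute the Euler form $\langle [S_i],[S_j]\rangle = \dim\Hom(S_i,S_j)-\dim\Ext^1(S_i,S_j)$ using that $kQ$ is hereditary and has no loops, and check that its symmetrization agrees with the symmetric bilinear form attached to the Cartan matrix of $\Delta$; this is precisely Gabriel's identification.

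For (b), the decisive point is that $kQ$ is hereditary, so every object of $\cd^b(kQ)$ is a finite direct sum of shifts $\Sigma^n M$ of modules, and every \emph{indecomposable} object has this form with $M$ an indecomposable module. Since $[\Sigma^n M]=(-1)^n[M]$, and since Gabriel's theorem identifies the classes $[M]$ of the indecomposable modules with the positive roots via the dimension vector, the classes of the indecomposable objects of $\cd^b(kQ)$ are exactly the positive roots together with their negatives, hence all the roots.

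For (c), I would separate the numerical statement from the lift. On the Grothendieck group, the automorphism induced by $\tau^{-1}$ is the Coxeter transformation, and a direct computation (or citation of the classical result) identifies it with the action of a Coxeter element $c=s_{i_1}\cdots s_{i_n}$ for a source sequence $i_1,\ldots,i_n$ of the vertices; that $c^h=\id$ is then a standard fact of Weyl-group theory. The main obstacle is to lift this to an isomorphism of functors, and I expect it to be the crux, since it cannot be detected on $K_0$ (both sides act trivially there). Here I would exploit the Serre functor $S=\Sigma\tau$, which commutes with $\Sigma$, to rewrite $\tau^{-h}=S^{-h}\Sigma^h$, so that the claim $\tau^{-h}\iso\Sigma^2$ becomes the fractional Calabi-Yau property $S^h\iso\Sigma^{h-2}$ of the Dynkin derived category. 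To establish this I would appeal to Happel's structure theorem, by which $\cd^b(kQ)$ is standard with Auslander-Reiten quiver $\Z\Delta$: using the bijection of (b) one first checks that the standard functors $\tau^{-h}$ and $\Sigma^2$ agree on objects, each sending a given indecomposable to the indecomposable with the same class, and then promotes this to a natural isomorphism by the combinatorial verification that both act identically on the mesh category of $\Z\Delta$, which for $\Delta$ of Dynkin type determines $\cd^b(kQ)$ up to equivalence.
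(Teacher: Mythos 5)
Your proposal is correct and follows essentially the same route as the paper, which simply cites Gabriel's theorem for (a) and (b) and, for (c), Happel's description of $\cd^b(kQ)$ as the mesh category of $\Z Q$ together with Gabriel's description of the Serre (Nakayama) functor, referring to Miyachi--Yekutieli for a detailed proof of the fractional Calabi--Yau statement $S^h\iso\Sigma^{h-2}$ to which you also reduce. One small caution: a class in $K_0$ determines an indecomposable of $\cd^b(kQ)$ only up to even shift, so the object-level agreement of $\tau^{-h}$ and $\Sigma^2$ cannot be deduced from part (b) alone and must be read off from the induced automorphisms of the translation quiver $\Z\Delta$, which is exactly what your final mesh-category step supplies.
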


\begin{proof} Parts a) and b) follow immediately from Gabriel's
theorem \cite{Gabriel72} and from Happel's description of the
derived category $\cd^b(kQ)$ in \cite{Happel87}. The isomorphism
of functors in part c) follows
from Happel's description of the category $\cd^b(kQ)$ as the
mesh category of the translation quiver $\Z Q$ and from
Gabriel's description of the Serre functor (alias Nakayama functor)
in Proposition~6.5 of \cite{Gabriel80}. A detailed proof of a more
precise statement is given by Miyachi-Yekutieli in Theorem~4.1 of \cite{MiyachiYekutieli01}.
\end{proof}

\subsection{On the order of the Zamolodchikov transformation}
\label{ss:order-of-the-Zamolodchikov-transformation}
As in section~\ref{ss:Zamolodchikov-transformation}, let $Q$ and $Q'$ be
finite quivers without oriented cycles such that between any two vertices,
there is at most one arrow. Let $\Za=\tau^{-1}\ten \id$ be the
Zamolodchikov transformation of the cluster category $\cc_A$
associated with $A=kQ\ten_k kQ'$, \cf section~\ref{ss:Zamolodchikov-transformation}.

\begin{theorem} \label{thm:order-Zamolodchikov}
Suppose that the graphs underlying $Q$ and $Q'$ are
two simply laced Dynkin diagrams with Coxeter numbers $h$ and $h'$.
Then we have an isomorphism of functors from $\cc_A$ to itself
\begin{equation}
\Za^{h+h'} \iso \id.
\end{equation}
\end{theorem}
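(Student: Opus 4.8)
The plan is to reduce the statement to the functorial periodicity $\tau^{-h}\iso\Sigma^2$ of Theorem~\ref{thm:Gabriel-Happel}(c), applied to each tensor factor, together with the defining relation of the cluster category. Write $\Sigma_Q,S_Q,\tau_Q=\Sigma_Q^{-1}S_Q$ for the suspension, Serre functor and Auslander--Reiten translation of $\db(kQ)$, and analogously $\Sigma_{Q'},S_{Q'},\tau_{Q'}$ for $\db(kQ')$; write $\Sigma_A$ and $S_A$ for those of $\db(A)$. As in the proof of Lemma~\ref{lemma:nilpotence}, every object of $\db(A)$ is built from external products $L\ten M$ with $L\in\db(kQ)$ and $M\in\db(kQ')$, and on such objects one has $\Sigma_A(L\ten M)\iso(\Sigma_Q L)\ten M\iso L\ten(\Sigma_{Q'}M)$ and, because $DA\iso D(kQ)\ten D(kQ')$ as bimodules, $S_A(L\ten M)\iso(S_Q L)\ten(S_{Q'}M)$. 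The computation recalled there then gives
\[
\Sigma_A^2 S_A^{-1}\iso\tau_Q^{-1}\ten\tau_{Q'}^{-1}
\]
as autoequivalences of $\db(A)$.

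First I would record that these autoequivalences descend to $\cc_A$: by the construction of section~\ref{ss:Zamolodchikov-transformation} (via Proposition~4.1 of \cite{Keller09a}) the functor $\tau_Q^{-1}\ten\id$ induces $\Za$, and the same mechanism applies to $\id\ten\tau_{Q'}$ and to $\Sigma_A^{\pm1}$. The essential input from Amiot's construction \cite{Amiot09} is that the orbit autoequivalence $\Sigma_A^2 S_A^{-1}$ becomes isomorphic to $\id$ in $\cc_A$. Combined with the displayed identity this yields, in $\cc_A$,
\[
\tau_Q^{-1}\ten\tau_{Q'}^{-1}\iso\id,
\]
and hence, raising to the $h'$-th power and composing with $\id\ten\tau_{Q'}^{\,h'}$,
\[
\tau_Q^{-h'}\ten\id\iso\id\ten\tau_{Q'}^{\,h'}.
\]

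With these identities the computation is a single cancellation. On $\cc_A$ we have
\[
\Za^{h+h'}=\tau_Q^{-(h+h')}\ten\id=(\tau_Q^{-h}\ten\id)\,(\tau_Q^{-h'}\ten\id).
\]
Theorem~\ref{thm:Gabriel-Happel}(c) for $Q$ turns the first factor into $\Sigma_Q^2\ten\id\iso\Sigma_A^2$; the identity above rewrites the second factor as $\id\ten\tau_{Q'}^{\,h'}$, and Theorem~\ref{thm:Gabriel-Happel}(c) for $Q'$, in the form $\tau_{Q'}^{\,h'}\iso\Sigma_{Q'}^{-2}$, turns it into $\id\ten\Sigma_{Q'}^{-2}\iso\Sigma_A^{-2}$. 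Therefore
\[
\Za^{h+h'}\iso\Sigma_A^2\,\Sigma_A^{-2}\iso\id,
\]
which is the assertion.

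The algebra is thus a one-line cancellation, and I expect the genuine work to lie in the descent bookkeeping of the second paragraph: one must check that the isomorphisms of functors living on $\db(A)$ --- the external-product identities for $\Sigma_A$ and $S_A$, the Gabriel--Happel isomorphism $\tau^{-h}\iso\Sigma^2$, and the trivialization $\Sigma_A^2 S_A^{-1}\iso\id$ --- all pass compatibly through Amiot's construction $\cc_A=\per(\Pi_3(A))/\cd_{fd}(\Pi_3(A))$ and are compatible with the identification of $\Za$ as the functor induced by $\tau_Q^{-1}\ten\id$. The main obstacle is to verify that $\Sigma_A^2 S_A^{-1}$ is really trivialized in $\cc_A$ itself (and not merely in the orbit category), so that the relation $\tau_Q^{-1}\ten\tau_{Q'}^{-1}\iso\id$ is available as an isomorphism of autoequivalences of $\cc_A$; granting this, the order of $\Za$ divides $h+h'$ by the manipulation above.
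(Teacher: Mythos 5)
Your argument follows the same route as the paper's proof: the Gabriel--Happel periodicity $\tau^{-h}\iso\Sigma^2$ applied to each tensor factor, the relation $(\tau\ten\id)(\id\ten\tau)\iso\id$ on $\cc_A$, and the final cancellation $\Sigma^2\,\Sigma^{-2}\iso\id$. The obstacle you flag at the end is resolved in the paper exactly as you hope: since $\cc_A$ is $2$-Calabi-Yau, $\Sigma^2$ is a Serre functor for $\cc_A$, the Serre functor is induced by $?\lten DA$, and the bimodule computation $(\Sigma^{-1}D(kQ)\ten kQ')\ten_A(kQ\ten\Sigma^{-1}D(kQ'))\iso\Sigma^{-2}DA$ then shows that $(\tau\ten\id)(\id\ten\tau)$ induces the identity on $\cc_A$; so your relation $\tau_Q^{-1}\ten\tau_{Q'}^{-1}\iso\id$ is indeed available. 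The one step you gloss over, and which the paper supplies explicitly, is of a different nature: Theorem~\ref{thm:Gabriel-Happel}(c) gives $\tau^{-h}\iso\Sigma^2$ only as an isomorphism of functors on $\cd^b(kQ)$, whereas to make sense of ``$\tau_Q^{-h}\ten\id\iso\Sigma_Q^2\ten\id$'' as an isomorphism of autoequivalences of $\cd^b(A)$, and hence of $\cc_A$, one must realize this isomorphism in the derived category of $kQ$-bimodules, so that it can be tensored over $k$ with $kQ'$. This upgrade from a one-sided to a two-sided isomorphism is not automatic; the paper deduces it from the main theorem of \cite{Keller93}, using that the isomorphism of objects $\tau^{-h}(kQ)\iso\Sigma^2(kQ)$ is compatible with the left $kQ$-action. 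With that ingredient added, your proof coincides with the paper's.
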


\begin{remark} Let $W$ be the Weyl group associated with $Q$
and $w_0$ its longest element. The element $w_0$ takes all
positive roots to negative roots. It is the product (in a
suitable order) of the reflections at all positive roots.
Let $w'_0$ be the longest element of the Weyl group associated
with $Q'$. One can refine the proof below to show that if
both $w_0$ and $w'_0$ act by multiplication with $-1$ on their respective
root lattices, we have an isomorphism
\begin{equation}
\Za^{(h+h')/2} \iso \id.
\end{equation}
\end{remark}

\begin{proof} Let $A_1= kQ$. The Auslander-Reiten translation
$\tau=\Sigma^{-1} S$ of $\cd^b(A_1)$ is given by tensoring with the
bimodule $\Sigma^{-1} DA_1$, where $DA_1$ is the $k$-dual of the bimodule $A_1$.
By Theorem~\ref{thm:Gabriel-Happel}, we have an isomorphism
$\tau^h(A_1)=\Sigma^{-2}(A_1)$ which is compatible with the
left actions of $A_1$ on the two sides. By the main
theorem of \cite{Keller93}, this implies that we have an isomorphism
in the derived category of bimodules $\cd^b(A_1\ten A_1^{op})$
\[
(\Sigma^{-1} DA_1)^{\ten_{A_1} h} \iso \Sigma^{-2} A_1,
\]
where we write $\ten_{A_1}h $ for the derived tensor power.
This yields an isomorphism in the derived category of $A$-bimodules
\[
(\Sigma^{-1} DA_1\ten A_2)^{\ten_{A} h} \iso \Sigma^{-2} A.
\]
Whence an isomorphism of functors $(\tau\ten\id)^h = \Sigma^{-2}$
from $\cc_A$ to itself.
Similarly, we have an isomorphism in the derived category of
$A_2$-bimodules
\[
(\Sigma^{-1}DA_2)^{\ten_{A_2} h'} \iso \Sigma^{-2} A_2\ko
\]
which yields an isomorphism of functors $(\id\ten\tau)^{h'}=\Sigma^{-2}$
in $\cc_A$.
Now we have
\[
(\Sigma^{-1} DA_1\ten A_2)\ten_A (A_1\ten \Sigma^{-1} DA_2) = \Sigma^{-2} (DA_1\ten DA_2) = \Sigma^{-2} DA.
\]
Since the category $\cc_A$ is $2$-Calabi-Yau, the Serre functor of $\cc_A$ is
isomorphic to $\Sigma^2$. Now the Serre functor is induced by the left derived
functor of tensoring with $DA$. So tensoring with $\Sigma^{-2} DA$ induces the
identity in $\cc_A$ and so the derived tensor product with
\[
(\Sigma^{-1} DA_1\ten A_2)\ten_A (A_1\ten \Sigma^{-1} DA_2)
\]
induces the identity in $\cc_A$. In other words, we have
\[
(\tau\ten\id)(\id\ten\tau) = \id
\]
as functors from $\cc_A$ to itself. Finally, we find the following chain
of isomorphisms of functors from $\cc_A$ to itself:
\[
(\tau\ten\id)^{h+h'} = (\tau\ten\id)^h (\tau\ten \id)^{h'} =
(\tau\ten\id)^h (\id\ten\tau)^{-h'} = \Sigma^{-2} \Sigma^{2} = \id.
\]
\end{proof}

\subsection{Conclusion}  \label{ss:conclusion}
Let $\Delta$ and $\Delta'$ be simply laced
Dynkin diagrams with Coxeter numbers $h$ and $h'$. Let $Q$ and $Q'$
be quivers with underlying graphs $\Delta$ and $\Delta'$. Let
$\cc_A$ be the cluster category associated with $A=kQ\ten kQ'$
and $T$ is canonical cluster tilting object.
As announced in section~\ref{ss:reduction-to-no-loops-and-T-periodic},
we have shown that in the sequence of cluster tilting
objects associated with the powers of $\mu_\boxten$, the endoquivers
do not have loops or $2$-cycles and that we have
\[
\mu_\boxtimes^{h+h'}(T)\cong T.
\]
This implies the conjecture, as explained in
section~\ref{ss:reduction-to-no-loops-and-T-periodic}.

\section{The non simply laced case}
\label{s:non-simply-laced-case}

In this section, we reduce the general case of the conjecture
to the one where the two Dynkin diagrams are simply laced.
We use the classical folding technique in the spirit
of section~2.4 of \cite{FominZelevinsky03b}. The material
in sections~\ref{ss:valued-quivers} and \ref{ss:valued-orbit-quivers-mutations}
is adapted from \cite{Dupont08a}.

\subsection{Valued quivers and skew-symmetrizable matrices}
\label{ss:valued-quivers}
A {\em valued quiver} is a quiver $Q$ endowed with a function $v: Q_1 \to \N^2$
such that
\begin{itemize}
\item[a)] there are no loops in $Q$,
\item[b)] there is at most one arrow between any two vertices of $Q$ and
\item[c)] there is a function $d: Q_0 \to \N$ such that $d(i)$ is strictly
positive for all vertices $i$ and, for each arrow $\alpha: i\to j$, we have
\[
v(\alpha)_1 d(i) = d(j)v(\alpha)_2 \ko
\]
where $v(\alpha)=(v(\alpha)_1, v(\alpha)_2)$.
\end{itemize}
For example, we have the valued quiver
\[
\vec{B}_2 : \xymatrix{1 \ar[r]^{(2,1)} & 2 } \ko
\]
where a possible function $d$ is given by $d(1)=1$, $d(2)=2$.
Let $Q$ be a valued quiver with vertex set $I$. We associate
an integer matrix $B=(b_{ij})_{i,j\in I}$ with it as follows
\[
b_{ij} = \left\{ \begin{array}{ll} 0 & \mbox{ if there is no arrow between $i$ and $j$;} \\
v(\alpha)_1 & \mbox{ if there is an arrow } \alpha: i \to j ; \\
-v(\alpha)_2 & \mbox{ if there is an arrow } \alpha: j \to i.
\end{array} \right.
\]
If $D$ is the diagonal $I\times I$-matrix with diagonal entries
$d_{ii}=d(i)$, $i\in I$, then the matrix $DB$ is antisymmetric.
The existence of such a matrix $D$ means that the matrix
$B$ is {\em antisymmetrizable}. It is easy to
check that in this way, we obtain a bijection between the
antisymmetrizable $I\times I$-matrices $B$ and the valued
quivers with vertex set $I$. Using this bijection, we define
the {\em mutation of valued quivers} using Fomin-Zelevinsky's
matrix mutation rule~\ref{eq:matrix-mutation}.

Let $(Q,v)$ be a valued quiver with vertex set $I=Q_0$. Its
{\em associated Cartan matrix} is the Cartan companion
\cite{FominZelevinsky03}
of the antisymmetrizable matrix $B$ associated with $Q$.
Explicitly, it is the the $I\times I$-matrix $C$
whose coefficient $c_{ij}$ vanishes if there are no
arrows between $i$ and $j$, equals $2$ if $i=j$,
equals $-v(\alpha)_1$ if there is an arrow $\alpha: i \to j$
and equals $-v(\alpha)_2$ if there is an arrow $\alpha: j\to i$. Thus, the Cartan
matrix associated with the above valued quiver $\vec{B}_2$
equals
\[
\left[ \begin{array}{cc} 2 & -2 \\ -1 & 2 \end{array} \right].
\]
The {\em valued graph} \cite{DlabRingel74a} underlying the valued quiver $(Q,v)$ is
by definition the set $I$ of vertices of $Q$ together with the
non negative integers $e_{ij}$, $i,j\in I$,  defined by
\[
e_{ij} = \left\{ \begin{array}{ll} -c_{ij} & i\neq j\, , \\ 0 & i=j. \end{array} \right.
\]
One checks easily that the pictorial representation (used in \cite{DlabRingel74a})
of the valued graph $(I, \mathbf{e})$ is obtained from that of $(Q,v)$
by replacing all arrows with unoriented edges. In the sequel, we will
identify Dynkin diagrams with the valued graphs corresponding to their
Cartan matrices.

% For example,
% the antisymmetrizable matrix associated with the above quiver
% $\vec{B}_2$ is
% \[
% \left[ \begin{array}{cc} 0 & 2 \\ -1 & 0 \end{array} \right]
% \]
% and its mutations at $1$ or at $2$ are given by
% \[
% \xymatrix{ 1 & 2 \ar[l]_-{(1,2)} }.
% \]

\subsection{Valued orbit quivers and their mutations}
\label{ss:valued-orbit-quivers-mutations}
Let $\tilde{Q}$ be an (ordinary) quiver with vertex
set $\tilde{I}$ without loops or $2$-cycles. Let $G$ be a finite group of automorphisms
of $\tilde{Q}$. Let $\tilde{B}$ be the antisymmetric matrix associated
with $\tilde{Q}$. The {\em orbit quiver} $\tilde{Q}/G$ is the quiver
with vertex set $I=\tilde{I}/G$ and where there is an arrow
from a vertex $i$ to a vertex $j$ if there is an arrow $\tilde{i} \to \tilde{j}$
for some vertices $\tilde{i}$ in $i$ and $\tilde{j}$ in $j$.
In the following example, due to A.~Zelevinsky, the orbit quiver of
\begin{equation} \label{eq:non-admissible-action}
\xymatrix@R=0.25cm{
   & 3 \ar[dl] &  \\
2 \ar[rr] &  & 1' \ar[ul] \ar[dd] \\
          &  &                  \\
1 \ar[uu] \ar[dr] &  & 2' \ar[ll] \\
   & 3' \ar[ur] &
}
\end{equation}
under the action of $\Z/2\Z$ which exchanges opposite vertices is
\[
\xymatrix{   & 2 \ar@/^/[dl] &   \\
1 \ar@/^/[ur] \ar[rr] & & 3. \ar[ul] }
\]
Notice the presence of a $2$-cycle in the orbit quiver. The action
of $G$ on $Q$ is {\em admissible} if the orbit quiver does not
have loops or $2$-cycles. In this case, we define the {\em valued orbit quiver}
\[
Q=\tilde{Q}/_v G
\]
to be the valued quiver with vertex set $I=\tilde{I}/G$ and whose
associated antisymmetrizable $I\times I$-matrix is given by
\[
b_{ij} = \sum_{\tilde{i} \in i} \tilde{b}_{\tilde{i}\; \tilde{j}} \ko
\]
where $\tilde{j}$ is any representative of $j$. We obtain
a function $d$ for $Q$ by sending an orbit $i$ to the cardinality
$d(i)$ of the stabilizer in $G$ of any vertex $\tilde{i}$ in $i$.
For example, the above quiver $\vec{B}_2$ is isomorphic to the
orbit quiver of
\[
\xymatrix{1 \ar[r] & 2 & 1' \ar[l]}
\]
under the action of $\Z/2\Z$ which fixes $2$ and exchanges $1$ and $1'$.

Now assume that the action of $G$ on $\tilde{Q}$ is admissible.
Let $k$ be a vertex of $Q$. Notice that between any two
vertices $\tilde{k}$ and $\tilde{k}'$ of the orbit $k$, there
are no arrows (because the orbit quiver has no loops).
Thus, the mutated quiver
\[
\prod_{\tilde{k}} \mu_{\tilde{k}} (\tilde{Q})
\]
is independent of the choice of the order of the vertices
$\tilde{k}$ in the orbit $k$. This quiver inherits a natural
action of $G$. However, this action need not be admissible
any more. For example, the following quiver
\[
\xymatrix@R=0.25cm{
           & 3 \ar[dr] &  \\
2  \ar[ur] &  & 1' \ar[dd]  \\
           &  &             \\
1 \ar[uu]  &  & 2' \ar[dl] \\
           & 3' \ar[ul] &
}
\]
has an admissible action by $\Z/2\Z$ which exchanges
opposite vertices but if we mutate at the
vertices of the orbit of $3$, we obtain the quiver
of \ref{eq:non-admissible-action} with its non admissible
action.

\begin{lemma} If the action of $G$ on the mutated
quiver
$
\prod_{\tilde{k}} \mu_{\tilde{k}} (\tilde{Q})
$
is admissible, there is a canonical isomorphism between
the valued orbit quiver
\[
(\prod_{\tilde{k}} \mu_{\tilde{k}} (\tilde{Q}))/_v G
\]
and the mutated valued quiver $\mu_k(Q/_v G)$.
\end{lemma}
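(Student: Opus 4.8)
The plan is to reduce the statement to an identity between the antisymmetrizable matrices of the two valued quivers. Both $(\prod_{\tilde{k}} \mu_{\tilde{k}}(\tilde{Q}))/_v G$ and $\mu_k(Q/_v G)$ have the same vertex set $I=\tilde{I}/G$, since quiver mutation does not change vertices and folding always produces the orbit set $\tilde{I}/G$. By the bijection between antisymmetrizable $I\times I$-matrices and valued quivers recalled in section~\ref{ss:valued-quivers}, it therefore suffices to check that the two associated matrices coincide; the canonical isomorphism will then be the identity on $I$. Write $\tilde{B}=(\tilde{b}_{\tilde{i}\tilde{j}})$ for the antisymmetric matrix of $\tilde{Q}$, let $B=(b_{ij})$ be the matrix of $Q=\tilde{Q}/_v G$, so $b_{ij}=\sum_{\tilde{i}\in i}\tilde{b}_{\tilde{i}\tilde{j}}$ for any representative $\tilde{j}\in j$, and let $\tilde{B}^*$ be the matrix of $\prod_{\tilde{k}}\mu_{\tilde{k}}(\tilde{Q})$. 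Since the mutation at the $G$-stable orbit $k$ is $G$-equivariant, $\tilde{B}^*$ is again $G$-invariant and the entries $b^*_{ij}=\sum_{\tilde{i}\in i}\tilde{b}^*_{\tilde{i}\tilde{j}}$ of $B^*$ are well defined; I want to compare $B^*$ with $B'=\mu_k(B)$ given by the matrix mutation rule~\ref{eq:matrix-mutation}. (The hypothesis that the $G$-action on $\prod_{\tilde{k}}\mu_{\tilde{k}}(\tilde{Q})$ is admissible is precisely what makes the left-hand valued orbit quiver, hence $B^*$, defined.)

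First I would record the one structural input coming from admissibility of the original action. Because the orbit quiver $\tilde{Q}/G$ has no loops and no $2$-cycles, for any two orbits $i$ and $k$ all arrows of $\tilde{Q}$ joining a vertex of $i$ to a vertex of $k$ point in the same direction; otherwise one would produce a loop (if $i=k$) or a $2$-cycle (if $i\neq k$) in $\tilde{Q}/G$. Hence the integers $\tilde{b}_{\tilde{i}\tilde{k}}$, for $\tilde{i}\in i$ and $\tilde{k}\in k$, are either zero or all of one common sign $\sigma=\sgn(b_{ik})$, so that $|b_{ik}|=\sum_{\tilde{i}\in i}|\tilde{b}_{\tilde{i}\tilde{k}}|$ for each fixed $\tilde{k}\in k$, and likewise $|b_{kj}|=\sum_{\tilde{k}\in k}|\tilde{b}_{\tilde{k}\tilde{j}}|$ for each fixed $\tilde{j}\in j$. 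I would also note that the vertices of the orbit $k$ are pairwise non-adjacent in $\tilde{Q}$ (no loop in the orbit quiver), so the $\mu_{\tilde{k}}$ commute and their composite is the simultaneous mutation: explicitly $\tilde{b}^*_{\tilde{i}\tilde{j}}=-\tilde{b}_{\tilde{i}\tilde{j}}$ whenever $\tilde{i}$ or $\tilde{j}$ lies in $k$, and $\tilde{b}^*_{\tilde{i}\tilde{j}}=\tilde{b}_{\tilde{i}\tilde{j}}+\sum_{\tilde{k}\in k}\sgn(\tilde{b}_{\tilde{i}\tilde{k}})\max(0,\tilde{b}_{\tilde{i}\tilde{k}}\tilde{b}_{\tilde{k}\tilde{j}})$ otherwise.

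The verification then splits into cases. If $i=k$ or $j=k$, summing the first branch over $\tilde{i}\in i$ gives $b^*_{ij}=-b_{ij}=b'_{ij}$ at once. If $i\neq k$ and $j\neq k$, summing the second branch over $\tilde{i}\in i$ yields
\[
b^*_{ij}=b_{ij}+\sum_{\tilde{i}\in i}\;\sum_{\tilde{k}\in k}\sgn(\tilde{b}_{\tilde{i}\tilde{k}})\max(0,\tilde{b}_{\tilde{i}\tilde{k}}\tilde{b}_{\tilde{k}\tilde{j}}),
\]
and here the consistency of signs does the work. Each nonzero summand has sign $\sigma=\sgn(b_{ik})$ and is nonzero exactly when $\sgn(\tilde{b}_{\tilde{i}\tilde{k}})=\sgn(\tilde{b}_{\tilde{k}\tilde{j}})$, i.e.\ (using consistency for both orbit pairs) when $b_{ik}b_{kj}>0$. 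If $b_{ik}b_{kj}\leq 0$ the whole double sum vanishes, matching $\sgn(b_{ik})\max(0,b_{ik}b_{kj})=0$; if $b_{ik}b_{kj}>0$ each nonzero summand equals $\sigma|\tilde{b}_{\tilde{i}\tilde{k}}||\tilde{b}_{\tilde{k}\tilde{j}}|$, so the double sum equals $\sigma\sum_{\tilde{k}\in k}\big(\sum_{\tilde{i}\in i}|\tilde{b}_{\tilde{i}\tilde{k}}|\big)|\tilde{b}_{\tilde{k}\tilde{j}}|=\sigma|b_{ik}|\sum_{\tilde{k}\in k}|\tilde{b}_{\tilde{k}\tilde{j}}|=\sigma|b_{ik}||b_{kj}|=\sgn(b_{ik})\max(0,b_{ik}b_{kj})$ by the two identities of the previous paragraph. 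In either case $b^*_{ij}=b'_{ij}$, so $B^*=\mu_k(B)$ and the lemma follows. The only genuinely delicate point is the sign bookkeeping in this last case; everything rests on the observation that admissibility forces the arrows between two orbits to be unidirectional, so that the absolute values add up coherently when one passes from $\tilde{B}$ to $B$.
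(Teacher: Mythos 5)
The paper states this lemma without any proof (unlike the subsequent Lemma~7.4, whose proof is at least declared to be an omitted straightforward computation), so there is nothing to compare against; your argument is correct and is exactly the direct verification the author leaves to the reader. You correctly isolate the two points that make the matrix computation go through — admissibility of the original action forces all arrows between two orbits to point the same way, so the entries $\tilde{b}_{\tilde{i}\tilde{k}}$ sum without cancellation and $|b_{ik}|=\sum_{\tilde{i}\in i}|\tilde{b}_{\tilde{i}\tilde{k}}|$, and the absence of arrows inside the orbit $k$ makes the composite of the $\mu_{\tilde{k}}$ a simultaneous mutation with the stated entrywise formula — and the case analysis matching the double sum against $\sgn(b_{ik})\max(0,b_{ik}b_{kj})$ is carried out correctly.
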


\subsection{Valued $Y$-seeds} \label{ss:valued-y-seeds}
A {\em (valued) $Y$-seed} is a pair $(Q,Y)$ formed by a
finite valued quiver $Q$ with vertex set $I$ and a free generating
set $Y=\{Y_i \;|\; i\in I\}$ of the semifield $\Q_{sf}(y_i\;|\; i\in I)$
generated over $\Q$ by indeterminates $y_i$, $i\in I$.
If $(Q,Y)$ is a $Y$-seed and $k$ a vertex of $Q$,
the {\em mutated $Y$-seed} $\mu_k(Q,Y)$ is
$(Q',Y')$, where $Q'=\mu_k(Q)$ and, for $j\in I$, we have
\[
Y'_j = \left\{ \begin{array}{ll}  Y_k^{-1} & \mbox{if $j=k$;} \\
Y_i (1+Y_k^{-1})^{-b_{kj}} & \mbox{if $b_{kj}\geq 0$} \\
Y_i (1+Y_k)^{-b_{kj}} & \mbox{if $b_{kj}\leq 0$.}
\end{array} \right.
\]
One checks that $\mu_k^2(Q,Y)=(Q,Y)$. For example, the following
$Y$-seeds are related by a mutation at the vertex $1$:
\[
\xymatrix@R=1cm@C=1cm{ y_1 \ar[r]^{(2,1)} & y_2 \ar[dl]|-{(1,4)}\\
y_3 \ar[u]^{(2,1)} \ar[r]_{(2,1)} & y_4, \ar[u]_{(2,1)}
}
\quad\quad
\xymatrix@R=1cm@C=1cm{ 1/y_1 \ar[d]_{(1,2)} & y_2/(1+y_1^{-1})^2 \ar[l]_-{(1,2)}\\
y_3(1+y_1) \ar[r]_-{(2,1)} & y_4 \ar[u]_{(2,1)}
}\ko
\]
where we write the variable $Y_i$ in place of the vertex $i$.

For a valued quiver $Q$, the {\em initial $Y$-seed}, the {\em $Y$-pattern}
and the {\em restricted $Y$-pattern} associated with a sequence
of vertices of $Q$ are defined as in the simply laced case
in section~\ref{ss:Y-seeds}.

Now let $\tilde{Q}$ be a quiver endowed with an
admissible action of a finite group $G$. Let $Q=\tilde{Q}/_v G$
be the valued orbit quiver. Let $k$ be a vertex of $Q$ and
$\tilde{Q}'$ the mutated quiver
\[
\prod_{\tilde{k}\in k} \mu_{\tilde{k}} \tilde{Q}.
\]
Assume that the action of $G$ on $\tilde{Q}'$ is still admissible.
Let
\[
\pi: \Q_{sf}(y_{\tilde{i}}| \tilde{i}\in\tilde{I}) \to \Q_{sf}(y_i| i\in I)
\]
be the unique morphism of semifields such that $\pi(y_{\tilde{i}})=y_i$ for
all vertices $\tilde{i}$ of $\tilde{Q}$. Let $\tilde{Y}$ be the set of
the $y_{\tilde{i}}$, $\tilde{i}\in\tilde{I}$, and let $Y$ be the set of
the $y_i$, $i\in I$. Define $Y'$ by $\mu_k(Q,Y)=(Q', Y')$ and
$\tilde{Y}'$ such that $(\tilde{Q}', \tilde{Y}')$ is the result of applying
the mutations $\mu_{\tilde{k}}$, $\tilde{k}\in k$, to $(\tilde{Q}, \tilde{Y})$.
\begin{lemma} \label{lemma:mutation-versus-quotients}
We have $\pi(\tilde{Y}'_i)=Y'_i$ for all $i\in I$.
\end{lemma}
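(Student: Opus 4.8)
The plan is to compare the two mutation rules one variable at a time, exploiting the multiplicativity of the $Y$-variable mutation together with the combinatorial constraint that admissibility imposes on the orbit $k$. I would fix an orbit $i$ of $G$ on $\tilde{I}$, choose a representative $\tilde{i}$, and show that $\pi(\tilde{Y}'_{\tilde{i}})$ equals $Y'_i$; as a byproduct one checks that the left-hand side is independent of the chosen representative, which is what makes the statement meaningful.

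First I would observe that the vertices $\tilde{k}$ lying in the orbit $k$ are pairwise non-adjacent in $\tilde{Q}$, since the orbit quiver has no loop at $k$; hence the mutations $\mu_{\tilde{k}}$, $\tilde{k}\in k$, pairwise commute. Moreover, mutating at one $\tilde{k}$ creates no new arrow between another $\tilde{k}'\in k$ and a vertex $\tilde{j}$, because such an arrow would arise from a path through $\tilde{k}$ with an endpoint in the orbit $k$, and there are no arrows inside the orbit. Consequently the factor that each $\mu_{\tilde{k}}$ contributes to $\tilde{Y}_{\tilde{j}}$ is computed from the arrows between $\tilde{k}$ and $\tilde{j}$ in $\tilde{Q}$ itself and from the untouched variable $y_{\tilde{k}}$, so the successive mutations just multiply these factors:
\[
\tilde{Y}'_{\tilde{j}} = y_{\tilde{j}}\prod_{\tilde{k}\in k}\bigl(1+y_{\tilde{k}}^{\,\sigma(\tilde{k})}\bigr)^{-\tilde{b}_{\tilde{k}\tilde{j}}},
\]
where $\sigma(\tilde{k})=-1$ if $\tilde{b}_{\tilde{k}\tilde{j}}\geq 0$ and $\sigma(\tilde{k})=+1$ if $\tilde{b}_{\tilde{k}\tilde{j}}\leq 0$ (the value of $\sigma$ being irrelevant when $\tilde{b}_{\tilde{k}\tilde{j}}=0$). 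For $\tilde{j}$ in the orbit $k$ the product is empty and $\tilde{Y}'_{\tilde{j}}=y_{\tilde{j}}^{-1}$, which maps under $\pi$ to $y_k^{-1}=Y'_k$; this settles the case $j=k$.

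The heart of the matter is the case $j\neq k$, and I expect the main obstacle to be the assertion that, for a fixed $\tilde{j}$, all the integers $\tilde{b}_{\tilde{k}\tilde{j}}$ with $\tilde{k}\in k$ share the same sign. I would derive this \emph{same-sign property} from admissibility: suppose $\tilde{b}_{\tilde{k}_1\tilde{j}}>0$ and $\tilde{b}_{\tilde{k}_2\tilde{j}}<0$ with $\tilde{k}_2=g\tilde{k}_1$. Applying the automorphism $g$ to the arrows $\tilde{k}_1\to\tilde{j}$ yields arrows $\tilde{k}_2\to g\tilde{j}$, while the arrows $\tilde{j}\to\tilde{k}_2$ are already present. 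If $g\tilde{j}\neq\tilde{j}$, passing to orbits produces both an arrow $k\to j$ and an arrow $j\to k$ in $\tilde{Q}/G$, a $2$-cycle; if $g\tilde{j}=\tilde{j}$, the arrows $\tilde{k}_2\to\tilde{j}$ and $\tilde{j}\to\tilde{k}_2$ form a $2$-cycle already in $\tilde{Q}$. Either way we contradict admissibility of the $G$-action (or the standing hypothesis that $\tilde{Q}$ has no $2$-cycles).

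Granting the same-sign property, the exponents simply add: the product above equals $(1+y_k^{-1})^{-b_{kj}}$ when all $\tilde{b}_{\tilde{k}\tilde{j}}\geq 0$ (so $b_{kj}=\sum_{\tilde{k}\in k}\tilde{b}_{\tilde{k}\tilde{j}}\geq 0$), and $(1+y_k)^{-b_{kj}}$ when all $\tilde{b}_{\tilde{k}\tilde{j}}\leq 0$ (so $b_{kj}\leq 0$), using $\pi(y_{\tilde{k}})=y_k$ and $b_{kj}=\sum_{\tilde{i}\in k}\tilde{b}_{\tilde{i}\tilde{j}}$. In each case this is exactly $Y'_j$ as prescribed by the valued $Y$-seed mutation rule. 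Finally, since both $\sum_{\tilde{k}}\tilde{b}_{\tilde{k}\tilde{j}}$ and the common sign are unchanged when $\tilde{j}$ is replaced by another representative of $j$ (reindex the orbit $k$ by $G$-equivariance), the computed value depends only on $i$, completing the verification. It is worth noting that no appeal to the semifield identity $1+y^{-1}=y^{-1}(1+y)$ is needed: once the signs are uniform, nothing beyond adding exponents is required, and it is precisely this sign uniformity, forced by admissibility, that makes folding compatible with mutation.
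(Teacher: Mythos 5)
Your proof is correct. The paper omits this argument entirely (``a straightforward computation which we omit''), and your write-up is exactly that computation: the two key points --- that the mutations at the vertices of the orbit $k$ commute and do not interact (no arrows within the orbit, hence no new arrows or changed $Y$-values at the other orbit vertices), and that admissibility (no $2$-cycle between $k$ and $j$ in the orbit quiver) forces all the $\tilde{b}_{\tilde{k}\tilde{j}}$, $\tilde{k}\in k$, to have the same sign so that the exponents simply add and match the valued mutation rule --- are precisely what makes the computation go through, and they also give the representative-independence that the statement implicitly requires.
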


The proof is a straightforward computation which we omit.

\subsection{Products of valued quivers} Let $Q$ and $Q'$ be two
valued quivers. The {\em tensor product $Q\ten Q'$ } is defined as
the tensor product of the underlying graphs endowed with the
valuation $v$ such that
\[
v(\alpha,i')=v(\alpha) \mbox{ and } v(i,\alpha')=v(\alpha')
\]
for all arrows $\alpha$ of $Q$ and $\alpha'$ of $Q'$ and
for all vertices $i'$ of $Q'$ and $i$ of $Q$. The
{\em triangle product $Q\boxtimes Q'$} is obtained from
the tensor product by adding an arrow $(j,j') \to (i,i')$
of valuation
\[
(v(\alpha)_2 v(\alpha')_2, v(\alpha)_1 v(\alpha')_1)
\]
for each pair of arrows $\alpha$ of $Q$ and $\alpha'$ of $Q'$.
For example, the triangle product $\vec{B}_2 \boxtimes \vec{B}_2$
is given by
\[
\xymatrix{(1,2) \ar[r]^{(2,1)} & (2,2) \ar[dl]|-{(1,4)} \\
(1,1)\ar[u]^{(2,1)} \ar[r]_{(2,1)} & (1,2) \ar[u]_{(2,1)}
}
\]
A valued quiver is {\em alternating} if its underlying ordinary
quiver is alternating. The {\em opposite} of a valued quiver
$Q$ has the opposite underlying quiver and the valuation defined
by
\[
v^{op}(\alpha^{op}) = (v(\alpha)_2, v(\alpha)_1)
\]
for each arrow $\alpha$ of $Q$. The {\em square product $Q \square Q'$}
of two alternating valued quivers is obtained from $Q\ten Q'$ by
replacing all full valued subquivers $\{i\}\otimes Q'$ and
$Q\otimes \{i'\}$ by their opposites, where $i$ runs through the
sinks of $Q$ and $i'$ through the sources of $Q'$.
Then the quivers $Q \boxten Q'$ and $Q\square Q'$ are
related by the same sequence of mutations as in
Lemma~\ref{lemma:triangle-mutates-to-square}.

\subsection{Restricted $Y$-patterns for pairs of arbitrary Dynkin diagrams}
Let $\Delta$ be a Dynkin diagram and $Q$ an alternating
valued quiver with underlying valued graph $\Delta$, cf. the end of
section~\ref{ss:valued-quivers}.
If $\Delta$ is not simply laced, we write
$Q$ as the valued quotient quiver of a valued quiver $\tilde{Q}$
with a group action by $G$ as in the following list where each
quiver $Q$ is followed by the corresponding quiver $\tilde{Q}$:
\begin{align}
\vec{B}_n &:\xymatrix@C=0.8cm{
1 \ar@{-}[r] & \ldots \ar[r] & (n-2) & (n-1) \ar[l] \ar[r]^-{(2,1)} & n} \\
\vec{A}_{2n-1} &: \xymatrix@R=0.1cm{1 \ar@{-}[r] & \ldots \ar[r] & (n-2)  & (n-1) \ar[l] \ar[rd] &  \\
                        &               &        &                      & n \\
          1'\ar@{-}[r] & \ldots \ar[r] & (n-2)' & (n-1)' \ar[l] \ar[ur] & } \\
\vec{C}_n &: \xymatrix@C=0.8cm{
1 \ar@{-}[r] & \ldots \ar[r] & (n-2) & (n-1) \ar[l] \ar[r]^-{(1,2)} & n} \\
\vec{D}_{n+1} &: \xymatrix@C=0.8cm@R=0.1cm{
             &               &       &                              & n \\
1 \ar@{-}[r] & \ldots \ar[r] & (n-2) & (n-1) \ar[l] \ar[ur] \ar[dr] & \\
             &               &       &                              & n'  }
\end{align}

\begin{align}
\vec{F}_4 &: \xymatrix{
1 \ar[r] & 2 & 3 \ar[l]_-{(2,1)} \ar[r] & 4 } \\
\vec{E}_6 &: \xymatrix@C=0.8cm@R=0.1cm{
1 \ar[r] & 2   &     &         &         \\
               &     & 3 \ar[ul] \ar[dl]\ar[r] & 4 \\
1' \ar[r] & 2' &     &         & } \\
\vec{G}_2 &: \xymatrix{ 1 \ar[r]^{(3,1)} & 2 } \\
\vec{D}_4 &: \xymatrix@R=0.1cm{ 1 \ar[rd] &  \\
           1' \ar[r] & 2 \\
           1'' \ar[ru] & }
\end{align}
If $\Delta$ is simply laced,
we consider $\tilde{Q}=Q$ with the trivial group action.
Notice that the Coxeter numbers of $\Delta$ and the underlying
diagram of $\tilde{Q}$ coincide. They are respectively equal
to $2n$, $2n$, $12$ and $6$. In all cases except $\vec{G}_2$,
the group acting is $\Z/2\Z$, for $\vec{G}_2$, it is $\Z/3\Z$.

Now let $\Delta'$ be another Dynkin diagram and $\tilde{Q}'$ a quiver
with a group action by $G'$ defined similarly. The group $G\times G'$
acts on the each of the products $\tilde{Q} \ten \tilde{Q}'$, $\tilde{Q} \square \tilde{Q}'$
and $\tilde{Q}\boxtimes \tilde{Q}'$ and the quotients are isomorphic
to the respective products of the valued quivers $Q$ and $Q'$.
For example, the triangle product of two copies of $\vec{B}_2$
% \[
% \xymatrix{\bt \ar[r]^{(2,1)} & \circ \ar[dl]|-{1,4} \\
% \circ\ar[u]^{(2,1)} \ar[r]_{(2,1)} & \circ \ar[u]_{(2,1)}
% }
% \]
leads to the following quiver $\tilde{Q}\boxtimes \tilde{Q}'$:
\[
\xymatrix{\circ \ar[r] \ar[d] & \circ \ar[d] & \circ \ar[l] \ar[d] \\
\bt \ar[r] & \circ \ar[ur] \ar[ul] \ar[dl] \ar[dr] & \bt \ar[l] \\
\circ \ar[u] \ar[r] & \circ \ar[u] & \circ \ar[l] \ar[u]
}
\]
The sequences of mutations $\mu_{\square}^{Q\square Q'}$ and $\mu_\boxtimes^{Q\boxtimes Q'}$ defined
in section~\ref{ss:reformulation} make sense for the valued quivers
$Q\square Q'$ and $Q\boxtimes Q'$. As in section~\ref{ss:reformulation},
one checks that the $Y$-system associated with $\Delta$ and $\Delta'$ is
periodic iff the restricted $Y$-pattern associated with $\mu_\boxtimes^{Q\boxtimes Q'}$
is periodic. Now the sequence of mutations $\mu_\boxtimes^{Q\boxtimes Q'}$ lifts
to the sequence of mutations $\mu_{\boxtimes}^{\tilde{Q} \boxtimes \tilde{Q}'}$ associated
with the simply laced quivers $\tilde{Q}$ and $\tilde{Q}'$. None of the mutations
in this latter sequence introduces $2$-cycles in the quotient quiver.
Thus, by lemma~\ref{lemma:mutation-versus-quotients}, the fact that the
restricted $Y$-pattern associated with $\mu_{\boxtimes}^{\tilde{Q} \boxtimes \tilde{Q}'}$ is
periodic with period dividing $h_{\tilde{Q}}+h_{\tilde{Q}'}$ implies that
the restricted $Y$-pattern associated with $\mu_{\boxtimes}^{Q\boxtimes Q'}$ is
periodic with period dividing $h_{\tilde{Q}}+h_{\tilde{Q}'}= h_\Delta+ h_{\Delta'}$.

\section{Effectiveness}
\label{s:effectiveness}

Let $\Delta$ and $\Delta'$ be simply laced Dynkin diagrams and $Q$ and $Q'$
alternating quivers with underlying graphs $\Delta$ and $\Delta'$.
In section~\ref{ss:reformulation}, we have seen that in order to write down
the explicit general solution of the $Y$-system associated
with $(\Delta,\Delta')$, it suffices to
write down the general seeds in the restricted $Y$-patterns
$\mathbf{y}_\square$ or indeed in $\mathbf{y}_\boxtimes$.
In Proposition~\ref{prop:Y-variables}, we have seen that the $Y$-variables
at a vertex of the regular tree are determined by the tropical $Y$-variables
and the $F$-polynomials. Thus, in order to write down the explicit
general solution of the $Y$-system, it suffices to write down
explicit expressions for the $F$-polynomials and the tropical
$Y$-variables at the vertices $\mu_\boxtimes^p(t_0)$
obtained from the initial vertex $t_0$ of the regular tree
by applying the sequence of mutations $\mu_\boxtimes^p$ for
all $p\in Z$. Such explicit expressions can easily be extracted
from the proof we gave, as we show now.

Let $k=\C$ and let $A=kQ$ and $A'=kQ'$ be the path algebras
of $Q$ and $Q'$. For a vertex $i$ of $Q$, we write $P_i=e_i A$
for the corresponding indecomposable projective $A$-module
and similarly $P_{i'}$ for a vertex $i'$ of $Q'$. Let $\cd$
be the derived category of the category $\mod(A\ten A')$
of finite-dimensional right modules over $A\ten A'$.
Consider the finite-dimensional algebra
\[
B= \bigoplus_{r\in\Z} \Hom_\cd(A\ten A', (\tau^{-r} A)\ten \tau^{-r}A') \ko
\]
where $\tau$ is the Auslander-Reiten translation (in the derived
category of right $A$-modules respectively $A'$-modules, \cf the proof
of Theorem~\ref{thm:order-Zamolodchikov}). Notice that since $A$ and $A'$ are
hereditary, it is easy to determine these translations.
By Theorem~\ref{thm:Hom-finite-cluster-category},
the algebra $B$ is isomorphic to $H^0(\Pi_3(A\ten A'))$ which,
by Proposition~\ref{prop:Calab-Yau-completion-via-quiver-with-potential},
is isomorphic to the Jacobian algebra of the quiver with
potential $(Q\boxtimes Q', W)$ for the potential $W$ constructed
in the Proposition. Notice that in the definition of $B$, the
summands indexed by the $r<0$ vanish (because then the
homology in degrees $\leq 0$ of $\tau^{-r}A$ and $\tau^{-r}A'$ vanishes).

Given a vertex $(i,i')$ of $Q\boxtimes Q'$ and an integer $p\in\Z$
we put
\[
M(i,i',p)=\bigoplus_{r\in\Z} \Hom_\cd(A\ten A', (\tau^{-r-p}\Sigma P_i)\ten (\tau^{-r} P_{i'})).
\]
This is a right $B$-module. It follows from Corollary~\ref{cor:decat-F-polynomials}
and section~\ref{ss:reduction-to-no-loops-and-T-periodic} that the $F$-polynomial
associated with $(i,i')$ at the vertex $\mu_\boxtimes^p(t_0)$ can be
expressed as
\[
F_{(i,i')}(\mu_\boxtimes^p(t_0))= \sum_e \chi(Gr_e(M(i,i',p))) y^e \ko
\]
where $e$ runs through the dimension vectors of submodules of $M(i,i',p)$.
% and
% \[
% y^e=\prod_{(j,j')} y_{(j,j')}^{e_{(j,j')}}.
% \]
Now we define another right $B$-module by
\[
N(i,i',p)=\bigoplus_{r\in\Z} \Hom_{\cd}(A\ten A', (\tau^{-r-p}P_i)\ten(\tau^{-r}P_{i'})).
\]
For vertices $(i,i')$ and $(j,j')$ of $Q\boxtimes Q'$, put
\[
c_{(i,i'),(j,j')}= \dim\Hom(N(i,i',-p),S_{(j,j')}) - \dim\Ext^1(N(i,i',-p),S_{(j,j')}).
\]
Then it follows from Corollary~\ref{cor:decat-tropical-Y-var} and
section~\ref{ss:reduction-to-no-loops-and-T-periodic} that we have
\[
\eta_{(i,i')}(\mu_\boxtimes^p(t_0))= \prod_{(j,j')} y_{(j,j')}^{c_{(i,i'),(j,j')}}.
\]

%\bibliographystyle{amsplain}
%\bibliography{stanKeller}

%\end{document}

\def\cprime{$'$} \def\cprime{$'$}
\providecommand{\bysame}{\leavevmode\hbox to3em{\hrulefill}\thinspace}
\providecommand{\MR}{\relax\ifhmode\unskip\space\fi MR }
% \MRhref is called by the amsart/book/proc definition of \MR.
\providecommand{\MRhref}[2]{%
  \href{http://www.ams.org/mathscinet-getitem?mr=#1}{#2}
}
\providecommand{\href}[2]{#2}

\end{document}